\documentclass[11pt]{article}
\usepackage[american]{babel}

\usepackage{graphicx}
\usepackage{comment}
\usepackage{color}
\usepackage{algorithm}
\usepackage{algorithmic}
\usepackage[left=2cm,right=2cm]{geometry}
\usepackage{amssymb,amsfonts}
\usepackage{amsthm,amsmath}
\usepackage{mathtools}
\usepackage[hidelinks]{hyperref}
\newcommand {\mat}[1]{\left[\begin{array}{#1}}
\newcommand {\rix}{\end{array}\right]}

\newtheorem{theorem}{Theorem}
 \newtheorem{corollary}[theorem]{Corollary}
\newtheorem{lemma}[theorem]{Lemma}
 
 \newtheorem{definition}[theorem]{Definition}
\newtheorem{remark}[theorem]{Remark}
\newtheorem{example}[theorem]{Example}
\newcommand{\diag}     {\mathop{\rm diag}\nolimits}
\newcommand{\rank}     {\mathop{\rm rank}\nolimits}
\newcommand{\R}{\mathbb R}
\newcommand{\C}{\mathbb C}
\def\eproof{\space
        {\ \vbox{\hrule\hbox{\vrule height1.3ex\hskip0.8ex\vrule}\hrule}}
        \bigskip}
\newcommand{\nnrm}[1]{{\left\vert\kern-0.25ex\left\vert\kern-0.25ex\left\vert #1
		\right\vert\kern-0.25ex\right\vert\kern-0.25ex\right\vert}}

\title{Characterization of stability radii for robustly asymptotically  stable dissipative Hamiltonian differential-algebraic systems}

\author{Peter Benner, Volker Mehrmann, Anshul Prajapati, and Punit Sharma}

\begin{document}
\maketitle

\begin{abstract}
    We study linear time-invariant dissipative Hamiltonian differential-algebraic systems.  
    We characterize when the systems are robustly asymptotically stable and derive exact conditions and bounds when this property is lost under structure-preserving perturbations.
	\end{abstract}

Keywords: {dissipative Hamiltonian differential-algebraic equation,  structured stability radius, distance to instability, distance to singularity, distance to nearest high index problem}

Msc Classification: {93D20, 93D09, 65F15}

\section{Introduction}%
\label{sec:intro}

In this paper, we analyze the robust asymptotic stability under structure-preserving perturbations for the class of \emph{linear time-invariant dissipative Hamiltonian differential-algebraic equation (dHDAE)} systems of the form
\begin{equation}\label{dhdae}
    E\dot{x}=(J-R)Qx ,
\end{equation}
with coefficient matrices $E, J, R, Q\in \mathbb C^{n,n}$ (the  set of $n\times n$ complex matrices), where the coefficients satisfy
\begin{equation}\label{dhprop}
  E^*Q=Q^*E\geq 0,\quad Q^*JQ=-Q^*J^*Q,\quad Q^*RQ= Q^*R^*Q\geq 0.
\end{equation}
Here $Q^*$ denotes the conjugate transpose of a matrix $Q$ and $R\, (\geq)> 0$ denotes that $R$ is positive (semi-) definite.
The function $\mathcal H: x\mapsto \frac 12 x^*E^*Qx$, which describes the energy stored in the system, is called the \emph{Hamiltonian} of the system.

Such dHDAE systems arise, e.g. in  structural mechanics and fluid dynamics; see \cite{GudLMS22} for a large list of applications. To illustrate the need for a detailed structured perturbation analysis, we present the following three examples.
\begin{example}\label{ex:MDK}{\rm
The first-order representation of an unforced  linear damped mechanical system is given by
\begin{equation} \label{eq_brake_dae}
\begin{bmatrix}M &0 \\0 & K \end{bmatrix} \begin{bmatrix} \dot q \\  \dot  p\end{bmatrix} =   \begin{bmatrix}-D  & -K \\ K &0  \end{bmatrix}
\begin{bmatrix} q  \\  p \end{bmatrix},
\end{equation}
where $M,D,K\in\mathbb R^{n,n}$ are Hermitian mass, damping, and stiffness matrices with $M,D\geq 0,K>0$; see,
e.g.,~\cite[Chapter 1]{Ves11}. Here, typically $M>0$, but when small and large masses are occurring in the system then $M$ is close to being singular. The matrices $M,K$ are typically finite element matrices and are subject to perturbations such as modeling and discretization errors, while $D$ usually has a lot of uncertainty, since damping and friction is very hard to model.
}
\end{example}

\begin{example}\label{ex:stokes}
{\rm
The space discretization of the unsteady (in)compressible Stokes or linearized Navier-Stokes equations via finite element or finite difference methods typically leads to dHDAE systems of the form
\begin{equation}\label{P-instat-op-general}
\begin{bmatrix} M_v & 0 \\ 0 & M_p \end{bmatrix}
 \begin{bmatrix} \dot{v} \\ \dot{p} \end{bmatrix} =
\begin{bmatrix} A & B\\ -B^* & C\end{bmatrix}
\begin{bmatrix} v\\p\end{bmatrix},
\end{equation}
where $M_v=M^*_v>0$ is the velocity mass matrix, $M_p=M^*_p\geq 0$ is the pressure mass matrix which is zero  in the case of an  incompressible fluid and very close to zero if the flow is close to being incompressible,
$B$ is the discretized gradient operator (normalized so that it is of full column rank),  $C=C^*\geq 0$ is an often employed stabilization term in numerical analysis that is typically positive definite but of small norm or zero if no stabilization is used, and  $A$ is a diffusion and dissipation matrix that has a positive semidefinite Hermitian part; see, e.g., \cite{EmmM13}. Here, $v$ and $p$ denote the discretized velocity and pressure, respectively. All coefficient matrices are subject to discretization and modeling errors.}
\end{example}	

\begin{example}\label{ex:poro}{\rm
The discretization of the equations that model the deformation of porous media saturated by an incompressible viscous fluid in first-order formulation as in~\cite[Section~3.4]{AltMU21} leads to a dHDAE of the form
\begin{align}
\label{eqn:twoField:opMatrix3}
\begin{bmatrix}Y & 0 & 0\\ 0 & A & 0 \\ 0 & 0 & M\end{bmatrix}
  \begin{bmatrix} \dot{w} \\\dot{u}\\ \dot{p} \end{bmatrix}
  = \begin{bmatrix} 0 & -A & D^* \\ A & 0 & 0 \\ -D & 0 & -K \end{bmatrix}
  \begin{bmatrix} w\\ u\\ p \end{bmatrix} ,
\end{align}
where $A,M,Y$ are Hermitian positive definite (with $Y$ being  of very small norm), $K$ is typically Hermitian positive semidefinite, and $D$ is general, non-Hermitian. Here, $u$ represents the discretized displacement field, $w$ the associated discretized velocities, and $p$ the discretized pressure.
A common simplification of the model is to set $Y=0$, then $E={\rm diag}(Y,A,M)=E^*\geq 0$ becomes singular.
All coefficient matrices are subject to discretization and modeling errors.
}
\end{example}

The class of dHDAE systems posesses numerous significant geometric and algebraic properties that are nicely encoded in their representation; see~\cite{AchAM21,DalV98,JacZ12,OrtVME02,MehU23}.
In this paper, we focus on the robust asymptotic stability of dHDAE systems. Before we do so, we first recall the definition of robust asymptotic stability for general unstructured linear DAEs, see \cite{ByeN93,DuLM13}.
\begin{definition}\label{def:robasyst}
A general homogeneous linear DAE $E\dot x=Ax$ with constant coefficients
$E,A\in \mathbb C^{n,n}$ is called
\emph{robustly asymptotically stable} if the pair $(E,A)$ is regular, i.e. $\det (\lambda E-A)\not \equiv 0$,  of index at most one, i.e. all blocks in the Kronecker canonical form (see Theorem~\ref{th:kcf} below) associated with the eigenvalue $\infty$ have size at most one,  and has all its finite eigenvalues in the open left half of the complex plane.
\end{definition}

The following example, which is  modified from an example in \cite{DuLM13}, provides an illustration of the possible difficulties
in the robust asymptotic stability of DAEs under small perturbations.
\begin{example}\label{ex:perind}
{\rm
Consider the DAE (it is actually a   dHDAE)
\begin{equation}\label{exp1}
\mat {cc} 1 & 0 \\ 0  & 0 \rix \mat {c} \dot x_1 \\ \dot x_2 \rix =\mat {cc} 0 & 1 \\ -1 & 0\rix \mat {c} x_1 \\x_2 \rix,
\end{equation}
which has only the trivial solution $x_1=x_2=0$.

If we perturb (\ref{exp1}) by a small $\varepsilon$ as
\begin{equation}\label{exp1p}
\mat {cc} 1 & 0 \\ 0  & 0 \rix \mat {c} {\dot x}_1 \\ {\dot x}_2 \rix =\mat {cc} 0 & 1 \\ -1 & \varepsilon\rix \mat {c} x_1 \\x_2 \rix,
\end{equation}
then solving the second equation of (\ref{exp1p}) for $x_2$
and substituting into the first equation, we obtain
\begin{equation}\label{uodeexp1}
{\dot x}_1=(1/\varepsilon)x_1.
\end{equation}
Clearly, if $\varepsilon>0$, then the perturbed system (\ref{exp1p})  is not a dHDAE any longer, since the Hermitian part of the right hand side $\mat{cc} 0 & 0 \\ 0 &-\epsilon \rix\not \geq 0$. But if $\varepsilon <0$, then the solution  $x_1$ is asymptotically stable, but it qualitatively differs from the solution of the original system (\ref{exp1}). For an arbitrarily prescribed initial value $x_1(0)\neq 0$, the initial value problem for (\ref{uodeexp1}) has a unique solution. Furthermore, the value of $x_2(0)$ is not required and is uniquely determined by $x_1(0)$.
In fact, this small perturbation has changed the index of the dHDAE (\ref{exp1}).
}
\end{example}

If the product in \eqref{dhdae} is multiplied out to create a matrix $A=(J-R)Q$ and the dHDAE structure  is ignored, then the asymptotic stability, regularity, and index of the system $E \dot x =A x$ is no longer evident from the structure of the coefficients. See \cite{DuLM13} for a detailed analysis. Also for general linear DAEs it is  very difficult to analyze  robust asymptotic stability under perturbations, see \cite{ByeHM98,GugL25,GugLM17}.
We will show that for dHDAE systems under structure-preserving perturbations this is possible and we present a characterization of  the nearest dHDAE system that is singular, of high index, or has purely imaginary eigenvalues.

As the presented examples demonstrate, for the analysis of models with uncertain data it is important to know whether a dHDAE system is robustly asymptotically stable. Further applications are presented e.g. in~\cite{GraMQSV16,Mar86,MarL90,MarPR07,RomM09,Sch90}. In this context, to quantify which perturbations and errors can be tolerated, it is important to determine the distance to the nearest system that is not robustly asymptotically stable.

For complex unstructured linear systems $\dot x=Ax$ that are asymptotically stable,
the computation of the \emph{stability radius}, i.e., the smallest perturbation that moves an eigenvalue to the imaginary axis, is a very difficult problem, see e.g. \cite{Bye88} and this is even more  the case for unstructured DAE systems, see \cite{ByeN93,DuLM13}.
The situation is  much better for  dHDAE systems. It is well-known \cite{MehMW18} that the finite eigenvalues of the matrix pencil  $\lambda E-(J-R)Q$ associated with \eqref{dhdae} are in the closed left half complex plane, and the purely imaginary eigenvalues (except possibly the eigenvalue $0$) are semisimple. Furthermore, the index of the pencil is at most two and the singular part (if it exists) is easily characterized.
In general, however, the systems are  not necessarily robustly asymptotically stable.

The computation of the smallest perturbation that destroys robust asymptotic stability has recently been studied for ordinary dissipative Hamiltonian systems  (the case that $E=I$) under structure-preserving perturbations; see~\cite{MehMS17,MehMW21}. In~\cite{MehMS16}, bounds for the smallest perturbations were examined by analyzing structure-preserving perturbations to $J$, $R$, and $Q$ individually, resulting in computable formulas. In~\cite{AliMM20}, stability radii approximations for large-scale port-Hamiltonian systems were derived, focusing solely on perturbations in $R$. Additionally, in~\cite{BagGS21}, perturbations to $J$ and $R$ were considered, and a lower bound was derived for the structured stability radii.
In \cite{GilMS18}, the problem  of finding a smallest perturbation that moves a dHDAE system that is not robustly asymptotically stable to the boundary of the set of  robustly asymptotically stable dHDAE systems has been studied.
For the case of dissipative Hamiltonian ODE systems, i.e.\ $E=I$, the distance to instability has recently  been characterized in \cite{BenMPS25}.

In this paper, we study the opposite direction, i.e., we determine formulas and bounds for the distance to the boundary
of the set of robustly asymptotically  stable  dHDAE systems under structure-preserving perturbations.
The paper is organized as follows.
In Section~\ref{sec:prelim}, we present the notation and some preliminary results. Moreover, we  recall some basic properties of dHDAE systems, and we also introduce the concept of robust asymptotic stability.
In Section~\ref{sec:dist}, we present explicit formulas for the structured distance to singularity, the structured distance to the system with index greater than one, and the structured distance to the nearest system with purely imaginary eigenvalues. These results are derived for structured perturbations in all the coefficients. In Section~\ref{sec:partial}, we also present perturbation results for the special cases in which only some of the coefficients are perturbed. Numerical examples are presented in Section \ref{sec:numerical}. We summarize the results and give an outlook on further research problems in Section~\ref{sec:conclusion}.

\section{Notation and preliminaries}\label{sec:prelim}
In the following, $\|\cdot\|$ denotes the spectral norm of a vector or a matrix
. By $\Lambda(A)$, we denote the spectrum of a matrix $A \in \C^{n,n}$. 
We use the notation $A \geq 0$ and $A \leq 0$ if $A \in \C^{n,n}$ is Hermitian and positive or  negative semidefinite, respectively, and $A > 0$ if $A$ is Hermitian positive definite.
 For a complex number $z$, $\Im(z)$ and $\Re(z)$, respectively, denote the imaginary and the real part of $z$. We denote by $\sigma_{\text{min}}(A)$  the smallest singular value of a matrix $A$. If $R$ is Hermitian, then $\lambda_{\text{max}}(R)$ and $\lambda_{\text{min}}(R)$ denote its largest and smallest eigenvalue, respectively.
We denote the identity matrix of size $n$ by $I_n$, leaving off the index if the dimension is clear from the context.

We will frequently use generalized Rayleigh quotients $\rho(x) := \frac{x^*H_1x}{x^*H_2x}$, with $x\in\C^n\setminus\{0\}$, for Hermitian positive semidefinite $H_1$ and $H_2\in\C^{n,n}$,
where we define that
\begin{equation}\label{eq:defrq}
	\rho(x)= \frac{x^*H_1x}{x^*H_2x} :=0, \quad\text{if both $x^*H_1x = 0$ and $x^*H_2x= 0$.}
\end{equation}
By \eqref{eq:defrq}, the function $\rho(x)$ is well-defined and lower semi-continuous, i.e., $\liminf_{y\to x}\rho(y)\geq \rho(x)$, for all $x\in\C^n\setminus\{0\}$, so that we can properly define optimization problems involving such Rayleigh quotients.

\subsection{Spectral properties of general DAEs}\label{sec:gendae}

For DAEs $E\dot x=Ax$  we synonymously speak about the pair of matrices $(E,A)$ and the pencil $\lambda E-A$. Then the structural properties are characterized via the Kronecker canonical form \cite{Gan59a}.
\begin{theorem}\label{th:kcf}
Let $E,A\in {\mathbb C}^{n,m}$. Then there exist nonsingular matrices
$S\in {\mathbb C}^{n,n}$ and $T\in {\mathbb C}^{m,m}$ such that
\begin{equation}\label{kcf}
S(\lambda E-A)T=\diag({\mathcal L}_{\epsilon_1},\ldots,{\mathcal L}_{\epsilon_p},
{\mathcal L}^\top_{\eta_1},\ldots,{\mathcal L}^\top_{\eta_q},
{\mathcal J}_{\rho_1}^{\lambda_1},\ldots,{\mathcal J}_{\rho_r}^{\lambda_r},{\mathcal N}_{\sigma_1},\ldots,
{\mathcal N}_{\sigma_s}),
\end{equation}
where the block entries have the following properties:
\begin{enumerate}
\item[\rm (i)]
Every entry ${\mathcal L}_{\epsilon_j}$ is a bidiagonal block of size
${\epsilon_j}\times ({\epsilon_j+1})$, $\epsilon_j\in{\mathbb N}_0$,
of the form
\[
\lambda\left[\begin{array}{cccc}
1&0\\&\ddots&\ddots\\&&1&0
\end{array}\right]-\left[\begin{array}{cccc}
0&1\\&\ddots&\ddots\\&&0&1
\end{array}\right].
\]
\item[\rm (ii)]
Every entry ${\mathcal L}^\top_{\eta_j}$ is a bidiagonal block of size
$({\eta_j+1})\times {\eta_j}$, $\eta_j\in{\mathbb N}_0$,
of the form
\[
\lambda\left[\begin{array}{ccc}
1\\0&\ddots\\&\ddots&1\\&&0
\end{array}\right]-
\left[\begin{array}{ccc}
0\\1&\ddots\\&\ddots&0\\&&1
\end{array}\right].
\]
\item[\rm (iii)]
Every entry ${\mathcal J}_{\rho_j}^{\lambda_j}$ is a Jordan block of size
${\rho_j}\times{\rho_j}$, $\rho_j\in{\mathbb N}$, $\lambda_j\in{\mathbb C}$,
of the form
\[
\lambda\left[\begin{array}{cccc}
1\\&\ddots\\&&\ddots\\&&&1
\end{array}\right]-
\left[\begin{array}{cccc}
\lambda_j&1\\&\ddots&\ddots\\&&\ddots&1\\&&&\lambda_j
\end{array}\right].
\]
\item[\rm (iv)]
Every entry ${\mathcal N}_{\sigma_j}$ is a nilpotent block of size
${\sigma_j}\times {\sigma_j}$, $\sigma_j\in{\mathbb N}$,
of the form
\[
\lambda\left[\begin{array}{cccc}
0&1\\&\ddots&\ddots\\&&\ddots&1\\&&&0
\end{array}\right]-
\left[\begin{array}{cccc}
1\\&\ddots\\&&\ddots\\&&&1
\end{array}\right].
\]
\end{enumerate}
The Kronecker canonical form is unique up to permutation of the blocks.
\end{theorem}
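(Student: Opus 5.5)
The plan is the classical route to the Kronecker canonical form: first separate the singular part of the pencil, then treat the remaining regular part. Throughout, $\lambda E-A$ is viewed as a matrix over $\mathbb C[\lambda]$, and the transformation $S(\lambda E-A)T$ is strict equivalence.

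\textbf{Step 1: extract the right singular blocks.} Suppose the columns of $\lambda E-A$ are linearly dependent over $\mathbb C(\lambda)$. Then there is a nonzero polynomial vector $x(\lambda)=\sum_{k=0}^{\epsilon}x_k\lambda^k$ of minimal degree $\epsilon$ with $(\lambda E-A)x(\lambda)\equiv 0$. Comparing coefficients gives the chain
\[
Ax_0=0,\qquad Ex_{k-1}=Ax_k \ (k=1,\dots,\epsilon),\qquad Ex_\epsilon=0 .
\]
Minimality of $\epsilon$ gives two facts:
\begin{itemize}
\item the vectors $x_0,\dots,x_\epsilon$ are linearly independent;
\item the vectors $Ax_1,\dots,Ax_\epsilon$ are linearly independent.
\end{itemize}
Using these vectors as the leading columns of $T$, and their images as the leading columns of $S^{-1}$, the pencil takes the block form
\[
\mat{cc}\mathcal L_\epsilon & D(\lambda)\\ 0 & \lambda E_1-A_1\rix .
\]
The \emph{main obstacle} is showing that the coupling block $D(\lambda)$ can be removed by further constant equivalences. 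This amounts to solving a linear matrix equation of Sylvester type, $\mathcal L_\epsilon X - Y(\lambda E_1-A_1)=D$ with constant $X,Y$. Its solvability again follows from the minimality of $\epsilon$: the reduced pencil $\lambda E_1-A_1$ has no right minimal index smaller than $\epsilon$, and this guarantees that the coefficient comparison in the equation can be carried out (Gantmacher's argument). I would first try to write this equation out coefficient by coefficient in $\lambda$ and solve it recursively along the chain structure of $\mathcal L_\epsilon$.

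\textbf{Step 2: extract the left singular blocks and reach the regular part.} Induction on the size of the pencil removes all $\mathcal L_{\epsilon_j}$ blocks. Applying the same argument to $\lambda E^*-A^*$ then yields the $\mathcal L^\top_{\eta_j}$ blocks. What remains is a square pencil $\lambda E_r-A_r$ with $\det(\lambda E_r-A_r)\not\equiv0$.

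\textbf{Step 3: the regular part.} Pick $\alpha\in\mathbb C$ with $\det(\alpha E_r-A_r)\neq 0$, and multiply on the left by $(\alpha E_r-A_r)^{-1}$. This gives the pencil $\lambda\hat E-(\alpha\hat E-I)$ with $\hat E=(\alpha E_r-A_r)^{-1}E_r$. Take a similarity transformation of $\hat E$ that separates its invertible part from its nilpotent part:
\begin{itemize}
\item on the invertible part, the Jordan form of $\hat E^{-1}(\alpha\hat E-I)$ produces the blocks $\mathcal J^{\lambda_j}_{\rho_j}$;
\item on the nilpotent part $N$, the pencil is $\lambda N-(\alpha N-I)$; multiplying by $(I-\alpha N)^{-1}$ and putting the resulting nilpotent matrix in Jordan form produces the blocks $\mathcal N_{\sigma_j}$.
\end{itemize}

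\textbf{Step 4: uniqueness.} The minimal indices $\epsilon_j,\eta_j$ are invariants of strict equivalence. They are determined by the degrees of minimal polynomial bases of the right and left kernels of $\lambda E-A$ over $\mathbb C(\lambda)$. The remaining block sizes are determined by the elementary divisors, finite and infinite, which are likewise invariant under strict equivalence. Together these fix the form up to permutation of the blocks.
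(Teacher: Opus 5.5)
The paper does not prove this theorem itself; it quotes it from Gantmacher \cite{Gan59a}. Your outline is precisely that classical argument: you split off $\mathcal L_\epsilon$ via a minimal-degree kernel vector and a Sylvester-type equation whose solvability rests on minimality, transpose to obtain the $\mathcal L^\top_{\eta_j}$ blocks, bring the regular part to Jordan and nilpotent form, and use minimal indices and elementary divisors for uniqueness; it is correct and follows the same route. The only slip is cosmetic: on the nilpotent part, multiplying by $(I-\alpha N)^{-1}$ gives $\lambda\tilde N+I$, so either multiply by $(\alpha N-I)^{-1}$ instead or add a sign change to land on the form $\lambda N_J-I$ of $\mathcal N_{\sigma_j}$.
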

The sizes of the rectangular blocks, $\eta_j$ and $\epsilon_i$,
are called \emph{left and right minimal indices} of $\lambda E-A$, respectively. The matrix pencil $\lambda E-A$ with $E,A\in\mathbb C^{n,m}$ is called \emph{regular} if $n=m$ and
$\operatorname{det}(\lambda_0 E-A)\neq 0$ for some $\lambda_0 \in \mathbb C$,
otherwise it is called \emph{singular}.

A value $\lambda_0\in\mathbb C$ is called a (finite) eigenvalue of $\lambda E-A$ if
\[
\operatorname{rank}(\lambda_0E-A)<\max_{\alpha\in\mathbb C}
\operatorname{rank}(\alpha E-A).
\]
Furthermore,
$\lambda_0=\infty$ is said to be an eigenvalue of $\lambda E-A$ if zero is an eigenvalue of $\lambda A-E$.
The size of the largest block ${\mathcal N}_{\sigma_j}$ is
called the \emph{index} $\nu$ of the pencil $\lambda E-A$, where, by convention,  $\nu=0$ if $E$ is invertible.


For matrix pencils associated with
dHDAE systems we recall the  special Kronecker structure in the next subsection.
\subsection{Spectral Properties of dHDAEs}\label{sec:dhdaes}
The spectral properties of the \emph{dH pencil} $P(\lambda)=\lambda E-(J-R)Q$ associated with~\eqref{dhdae} have been completely characterized in \cite{MehMW18}, see also \cite{GilMS18,MehMS16} for partial results.
\begin{theorem}\label{thm:singindspec}
Let $E,Q\in \mathbb C^{n,m}$ satisfy $E^* Q=Q^* E\geq 0$ and let all left minimal indices of
$\lambda E-Q$ be equal to zero (if there are any). Furthermore, let $R\geq 0$. Then the following
statements hold for the pencil $P(\lambda)=\lambda E-(J-R)Q$.
\begin{enumerate}
\item[\rm (i)] If $\lambda_0\in\mathbb C$ is a finite eigenvalue of $P(\lambda)$ then $\operatorname{Re}(\lambda_0)\leq 0$.
\item[\rm (ii)] If $\omega\in\mathbb R\setminus\{0\}$ and $\lambda_0=i\omega$ is a finite eigenvalue of $P(\lambda)$, then
$\lambda_0$ is semisimple. Moreover, if the columns of $V\in\mathbb C^{m,k}$ form a basis of a regular deflating
subspace of $P(\lambda)$ associated with $\lambda_0$, then $RQV=0$.
\item[\rm (iii)] The index of $P(\lambda)$ is at most two.
\item[\rm (iv)] All right minimal indices of $P(\lambda)$ are at most one (if there are any).
\item[\rm (v)] If in addition $\lambda E-Q$ is regular, then all left minimal indices of $P(\lambda)$ are zero
(if there are any).
\end{enumerate}
\end{theorem}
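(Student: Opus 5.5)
The plan is to derive every statement from one device: multiply the relevant kernel or chain equation from the left by $x^*Q^*$ and use three facts. First, $Q^*E=E^*Q\ge 0$. Second, $Q^*JQ$ is skew-Hermitian. Third, $Q^*RQ\ge 0$. In addition, $Q^*E\ge 0$ gives $x^*Q^*Ex=0 \Rightarrow Q^*Ex=0$, and likewise for $R$. The assumption on the left minimal indices of $\lambda E-Q$ is used only to rule out degenerate vectors with $Q^*Ex=0$ that do not genuinely belong to the regular part.

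\emph{(i)}. I would choose $x\ne 0$ in a regular deflating subspace of $P$ for $\lambda_0$ (via Theorem~\ref{th:kcf}), so that $(\lambda_0E-(J-R)Q)x=0$. Multiplying by $x^*Q^*$ and taking real parts gives
\[
\Re(\lambda_0)\,x^*Q^*Ex=-x^*Q^*RQx\le 0 .
\]
Whenever $x^*Q^*Ex>0$ this proves $\Re(\lambda_0)\le 0$. It remains to show that $x^*Q^*Ex=0$, i.e.\ $Q^*Ex=0$, cannot occur for an eigenvector of the regular part. In that case the same identity forces $Q^*RQx=0$, hence $RQx=0$, and then $\lambda_0Ex=JQx$. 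I would then argue that $Ex$ and $Qx$ lie in a common subspace on which $\lambda E-Q$ is degenerate. The hypothesis that all left minimal indices of $\lambda E-Q$ are zero should then force $Ex=0$ and $Qx=0$. Such an $x$ lies in $\ker E\cap\ker Q$ and contributes to a singular block $\mathcal L_0$, not to the regular part.

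\emph{(ii)}. For $\lambda_0=i\omega$, the identity in (i) gives $RQx=0$ for every $x$ in the deflating subspace, and hence $RQV=0$. Since $\omega\ne0$, $x^*Q^*Ex=0$ would give $Q^*Ex=0$ and $JQx=i\omega Ex$, which is excluded as in (i); so $x^*Q^*Ex>0$. For semisimplicity, suppose there is a chain vector $y$ with $(\lambda_0E-(J-R)Q)y=Ex$ and multiply by $x^*Q^*$. Using $Q^*E=E^*Q$, the skewness of $Q^*JQ$, $RQx=0$ and $\bar\lambda_0=-\lambda_0$, the left-hand side becomes $-\big((\lambda_0E-(J-R)Q)x\big)^*Qy=0$. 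Hence $x^*Q^*Ex=0$, a contradiction.

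\emph{(iii)} and \emph{(iv)}. For (iii) I would work with the reversed pencil $E-\mu(J-R)Q$ at $\mu=0$. Suppose there is an infinite-eigenvalue chain of length three:
\[
Ex_0=0,\qquad Ex_1=(J-R)Qx_0,\qquad Ex_2=(J-R)Qx_1 .
\]
Multiplying the second relation by $x_0^*Q^*$ gives $x_0^*Q^*(J-R)Qx_0=x_0^*Q^*Ex_1=(Ex_0)^*Qx_1=0$. Taking real parts yields $RQx_0=0$, and the skew part gives $x_0^*Q^*JQx_0=0$. Next I would pair $x_1$ with the third relation, and $x_0$ with the third relation. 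Repeating the real-part trick together with the positivity of $Q^*E$ should give $x_1^*Q^*Ex_1=0$, so $Q^*Ex_1=0$. Combined with the hypothesis on $\lambda E-Q$, this should collapse the chain, showing the index is at most two. For (iv) the same computation applies to a polynomial right null vector $x(\lambda)=x_0+\lambda x_1+\lambda^2x_2+\cdots$ of degree at least two. Comparing coefficients produces chain relations of exactly this form, and the same pairing shows that the degree can be reduced to at most one.

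\emph{(v)}. If $\lambda E-Q$ is regular, then for a left null vector $w(\lambda)$ of $P$ I would transfer the problem to a right null vector of the adjoint-type pencil. Regularity of $\lambda E-Q$ lets one represent $w$ through $Q$ or $E$, after which the energy identity forces $w$ to be constant.

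I expect the main obstacle to be the degenerate case $Q^*Ex=0$ in (i)–(iii). There one must use the left-minimal-index hypothesis on $\lambda E-Q$ precisely, most likely via its Kronecker form, to show that such vectors lie in $\ker E\cap\ker Q$ and hence only in trivial singular blocks.
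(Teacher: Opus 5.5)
The paper gives no proof of this theorem; it quotes it from \cite{MehMW18}. Judged on its own, your plan has a genuine gap at exactly the point you flag as the main obstacle. Your energy identities are correct: the identity $\operatorname{Re}(\lambda_0)\,x^*Q^*Ex=-x^*Q^*RQx$; the pairing $x^*Q^*P(\lambda_0)y=-(P(\lambda_0)x)^*Qy$ for $\lambda_0\in i\mathbb R$ once $RQx=0$; and the chain computation giving $RQx_0=0$ and $Q^*Ex_1=0$. But the only step that uses the hypothesis on $\lambda E-Q$ is never carried out, and the mechanism you suggest for it is false: $Q^*Ex=0$, $RQx=0$, $\lambda_0Ex=JQx$ do not force $Ex=Qx=0$. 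Two counterexamples show this.
\begin{itemize}
\item Take $E=I$, $Q=0$. All hypotheses hold and $P(\lambda)=\lambda I$, yet every regular eigenvector satisfies $Q^*Ex=0\neq Ex$.
\item Take $E=\diag(1,0)$, $Q=\diag(0,1)$, $J=\mat{cc}0&1\\-1&0\rix$, $R=0$, so that $P(\lambda)=\mat{cc}\lambda&-1\\0&0\rix$. The vector $x=(1,1)^\top$ satisfies all three relations with $\lambda_0=1$, yet $Ex\neq0\neq Qx$. It is the value at $\lambda=1$ of the degree-one null vector $(1,\lambda)^\top$, so it lies in an $\mathcal L_1$ block of $P$, not in $\ker E\cap\ker Q$.
\end{itemize}
The same defect affects the rest of the plan. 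In (iii)/(iv), knowing $Q^*Ex_1=0$ neither collapses the chain nor lowers the degree. In (v), the ``representation of $w$ through $Q$ or $E$'' is asserted, not derived.

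The missing ingredient is a Lagrangian lemma, which you should prove and then use in each part.
\begin{itemize}
\item $E^*Q=Q^*E$ alone forces all right minimal indices of $\lambda E-Q$ to vanish: if $Qx_0=0$ and $Ex_{k-1}=Qx_k$, then $\|Ex_0\|^2=x_1^*Q^*Ex_0=x_1^*E^*Qx_0=0$, and you induct.
\item With your hypothesis, split off $\ker E\cap\ker Q$ and $\ker E^*\cap\ker Q^*$ unitarily; this preserves the structure via $J\mapsto U^*JU$, $R\mapsto U^*RU$. What remains is a regular block $\lambda E_{11}-Q_{11}$, so $\operatorname{range}\mat{c}E_{11}\\Q_{11}\rix$ is Lagrangian for $((a,b),(c,d))\mapsto a^*d-b^*c$.
\item Consequently, $Q^*Ex=0$ implies $Ex=Eu$ for some $u\in\ker Q$.
\end{itemize}
With this lemma the parts go through. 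In each case you use that coefficients of polynomial null vectors lie in the span of the $\mathcal L_\epsilon$-columns of the Kronecker form.
\begin{itemize}
\item (i)/(ii): for $\lambda_0\neq0$, $y(\lambda)=\lambda_0u+\lambda(x-u)$ satisfies $P(\lambda)y(\lambda)\equiv0$ and $y(\lambda_0)=\lambda_0x$. This is impossible for $x\neq0$ in a regular deflating subspace. For $\lambda_0=0$ nothing is needed in (i).
\item (iii): from $Ex_1=Eu$ you get $P(\lambda)(u+\lambda x_0)\equiv0$, contradicting that $x_0$ comes from a nilpotent block.
\item (iv): the same construction shows that $x(\lambda)-\lambda^{d-1}(u+\lambda x_d)$ is a null vector of lower degree.
\item (v): regularity gives $\ker[\,Q^*\ \ -E^*\,]=\operatorname{range}\mat{c}E\\Q\rix$. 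This turns a left null vector $\sum_k\lambda^kw_k$ of degree $d$ into a right null vector $y$ of degree $d+1$ of the dH pencil $\lambda E+(J+R)Q$, with $w_k=Qy_{k+1}$. Your energy identity then makes $w_d$ a constant left null vector, so the degree drops.
\end{itemize}
Finally, in (ii) prove semisimplicity first, since it needs $RQx=0$ only for eigenvectors, and only then conclude $RQV=0$. The identity from (i) does not apply to generalized eigenvectors.
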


It has been demonstrated in \cite{MehMW18} that the   condition $E^* Q=Q^* E\geq 0$ cannot be weakend in general. Furthermore, if  $\lambda E-Q$ is singular,  then the pencil may have eigenvalues with positive real part.

\subsection{Removing the \texorpdfstring{$Q$}{Q} factor in linear pHDAE systems}
\label{sec:noQ}
In many applications where dHDAE system arise one has that $Q = I$. In this case $E=E^*\geq 0 $ and  the Hamiltonian is given by $\mathcal H=\frac 12 x^*Ex$. This representation has many advantages, since the coefficients appear linearly in \eqref{dhdae}, which greatly simplifies the analysis and also the perturbation theory. In the following we recall a result from \cite{MehU23} how the factor $Q$ can be removed.

If $Q$ is invertible, then the state equation can be  multiplied with $Q^*$ from the left without changing the solution set, yielding a system with the same solution set given by
\begin{equation}\label{mwithQ}
	\widetilde{E}\dot x=Q^* E\dot x = Q^* (J-R)Q x=
 (\widetilde{J}-\widetilde{R})x,   \end{equation}
with
$\widetilde{E} \vcentcolon= Q^* E$,
$\widetilde{J} \vcentcolon=Q^* JQ$, $\widetilde{R} \vcentcolon= Q^* R Q$, which
is again a dHDAE, but now has $\widetilde{Q}=I$.

If $Q$ is not of full rank, then the situation is more complex and even in the case $E=I$ the solution can grow unboundedly as the following example from \cite{MehMW18} shows. Consider
\begin{displaymath}
    \begin{bmatrix} \dot{x}_1 \\ \dot{x}_2 \end{bmatrix} = JQ\begin{bmatrix}x_1\\x_2\end{bmatrix} = \begin{bmatrix} 0&-1\\ 1&0\end{bmatrix}\begin{bmatrix} 1&0\\ 0&0\end{bmatrix} \begin{bmatrix}x_1\\x_2\end{bmatrix},\quad
    \begin{bmatrix}x_1(0)\\x_2(0)\end{bmatrix} = \begin{bmatrix}x_{1,0}\\x_{2,0}\end{bmatrix}
\end{displaymath}
with Hamiltonian $\mathcal H=\tfrac {1}{2} x_1^2$. It has the solution $x_1=x_{1,0}$, $x_2=x_{2,0} +t x_{1,0}$ and thus has linear growth and hence is not stable, since the system matrix $JQ$ is a Jordan block of size $2$ at the eigenvalue $0$.

In view of this discussion, to have robust asymptotic stability, we assume in the following that $Q$ is invertible and in this case we may as well just assume that the dHDAE system has the form
\begin{equation}\label{dhdae1}
E\dot x=(J-R) x,
\end{equation}
with $E=E^*\geq 0$, $R=R^*\geq 0$, $J=-J^*$. Actually one should use this representation  even in the case of ordinary differential equations, see \cite{MehU23} for a detailed discussion.

\subsection{A staircase form for dHDAEs}\label{sec:staircase}

To check whether a system of the form \eqref{dhdae1} is robustly asymptotically stable
one can use the following staircase form under unitary transformations from \cite{AchAM21}.
\begin{lemma}\label{lem:tSF}
Let $E,J,R\in\mathbb C^{n,n}$ satisfy $E=E^*\geq 0$, $R=R^*\geq 0$ and $J=-J^*$.
Then there exists a unitary matrix $P\in\mathbb C^{n,n}$, such that
\begin{eqnarray}
\hat E :=P^*\ E\ P
&=:&\begin{bmatrix}
E_{1,1} & E_{2,1}^* & 0 & 0 & 0 \\
E_{2,1} & E_{2,2} & 0 & 0 & 0 \\
0  & 0 & 0 & 0 & 0 \\
0  & 0 & 0 & 0 & 0 \\
0  & 0 & 0 & 0 & 0
\end{bmatrix}, \nonumber\\
\hat R :=P^*\ R\ P
&=:&\begin{bmatrix}
R_{1,1} & R_{2,1}^* & R_{3,1}^*& 0 & 0 \\
R_{2,1} & R_{2,2} & R_{3,2}^* &0 & 0  \\
R_{3,1} & R_{3,2} & R_{3,3} &0 & 0 \\
0  & 0 & 0 & 0 & 0 \\
0  & 0 & 0 & 0 & 0
\end{bmatrix}, \label{staircase:EJR} \\
\hat J :=P^*\ J\ P
&=:&\begin{bmatrix}
J_{1,1} & -J_{2,1}^* & -J_{3,1}^* & -J_{4,1}^* & 0\\
J_{2,1} & J_{2,2} & -J_{3,2}^*& 0 & 0\\
J_{3,1} & J_{3,2} & J_{3,3}&  0 & 0\\
J_{4,1} & 0& 0 & 0 &0\\
 0 & 0 & 0 & 0 & 0
\end{bmatrix}.
\nonumber
\end{eqnarray}
The matrices are partitioned in the same way, with (square) diagonal block matrices of sizes $n_1$, $n_2$, $n_3$, $n_4=n_1$, $n_5$.
If present, then $
\begin{bmatrix} E_{1,1} & E_{2,1}^* \\ E_{2,1} & E_{2,2}  \end{bmatrix}>0
$, and the matrices $J_{4,1}$, $J_{3,3}-R_{3,3}$ with $R_{3,3}\geq 0$ are invertible.
\end{lemma}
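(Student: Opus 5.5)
We construct $P$ as a product of unitary transformations, each compressing one kernel while leaving the earlier zero patterns intact. All transformations are congruences $X\mapsto P^*XP$ with $P$ unitary, so $E=E^*\geq 0$, $R=R^*\geq 0$ and $J=-J^*$ are preserved at every step, and zero blocks above the diagonal follow from those below.

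\textbf{Step 1 (compress $E$).} Take a unitary eigenvector basis of $E$ (or an SVD) with the positive eigenvalues first. Then $E=\diag(E_{11},0)$ with $E_{11}>0$ of size $m:=\rank E$. Partition $J$ and $R$ conformally with blocks $J_{ij}$, $R_{ij}$ for $i,j\in\{1,2\}$.

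\textbf{Step 2 (split off the kernel of $R$ in the second block row/column).} Let $R_{22}\geq 0$ be the trailing block. Use a unitary change of the trailing variables so that $R_{22}=\diag(\tilde R,0)$ with $\tilde R>0$. Since $R\geq 0$, a zero diagonal block forces the corresponding off-diagonal blocks of $R$ to vanish: if $x^*Rx=0$ then $Rx=0$. So $R$ becomes nonzero only on the leading rows and columns, which gives the zero pattern of $\hat R$ in the last two block rows and columns.

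\textbf{Step 3 (split the $J$-block on $\ker R$).} Call the trailing subspace from Step~2 (where $E$ and $R$ both vanish) $\mathcal K$. Let $J_{\mathcal K\mathcal K}$ be the skew-Hermitian block of $J$ on $\mathcal K$.
\begin{itemize}
\item Apply a unitary transformation within $\mathcal K$ that block-diagonalizes $J_{\mathcal K\mathcal K}$ into an invertible part and a zero part. Move the invertible part into the third block.
\item Treat the remaining piece $\mathcal K_0$, where $E$, $R$ and $J_{\mathcal K_0\mathcal K_0}$ all vanish. Its coupling to the remaining blocks is the block $J_{\cdot,\mathcal K_0}$. Take the SVD of the coupling $B$ of $\mathcal K_0$ to the leading blocks. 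Rows of $B$ corresponding to zero singular values give the trailing fully-zero block $n_5$. The remaining part gives an invertible square coupling block, $J_{4,1}$.
\item Making this coupling land only in block row $1$, with invertible $J_{4,1}$ and $n_4=n_1$, forces a further unitary split of the leading $E$-range part into block $1$ (coupled) and block $2$ (uncoupled). This is done by an SVD of the coupling restricted to the $E$-positive subspace. Because it is a unitary change inside the $E$-positive subspace, the leading $2\times2$ block of $E$ stays positive definite.
\end{itemize}

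\textbf{Step 4 (invertibility of $J_{33}-R_{33}$).} Block $3$ must collect the non-$E$ variables on which $J-R$ is invertible. I would redo Steps~2--3 jointly: restrict $J-R$ to $\ker E$ and split $\ker E$ into the kernel of that restriction and its complement. The kernel part $\mathcal N$ satisfies $x^*(J-R)x=0$, hence $x^*Rx=0$, hence $Rx=0$, hence $J_{\ker E,\mathcal N}=0$. This gives $R$ zero on $\mathcal N$ and the $J$-coupling of $\mathcal N$ only to the $E$-range. On the complement, $J_{33}-R_{33}$ is invertible, because it is the compression of $J-R$ to $\ker E$ after removing its null space.

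\textbf{Main obstacle.} The coupling from $\mathcal N$ to the $E$-range must then be made square and invertible via an SVD, splitting the $E$-range into blocks $1$ and $2$ and $\mathcal N$ into blocks $4$ and $5$. I expect this step to be the main obstacle. The difficulty is checking that $J_{3,4}=0$ and that the null-space splitting is consistent, i.e.\ that the complement of $\mathcal N$ in $\ker E$ is chosen orthogonally so that no fill-in appears. The approach is to argue with the quadratic-form identity $x^*(J-R)x=-x^*Rx$ on $\ker E$, which is what makes all the off-diagonal blocks vanish.
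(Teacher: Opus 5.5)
Your final route is correct: compress $E$ to $\diag(\tilde E,0)$ with $\tilde E>0$, split $\ker E$ orthogonally into the kernel $\mathcal N$ of the compression $A=N^*(J-R)N$ and its complement, then take an SVD of the coupling block of $J$ between $\mathcal N$ and $\operatorname{range}E$. The paper gives no proof of this lemma (it quotes it from \cite{AchAM21}), but this is the natural constructive argument for the staircase form.

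You should discard Steps~2--3 as first written rather than treat Step~4 as a refinement of them, because they fail. Take $E=0$, $R=\diag(1,0)$, $J=\begin{bmatrix}0&-1\\1&0\end{bmatrix}$. Steps~2--3 place $e_2$ in $\mathcal K_0$, but its only coupling is to the $R$-positive part of $\ker E$, not to $\operatorname{range}E=\{0\}$. So neither block~4 nor block~5 can absorb it. In fact $J-R$ is invertible, and the correct form has $n_3=2$.

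Two smaller points about Step~4.
\begin{itemize}
\item Invertibility of $J_{3,3}-R_{3,3}$ does not follow merely from deleting a null space. For example, compressing $\begin{bmatrix}0&1\\0&0\end{bmatrix}$ onto the orthogonal complement of its kernel gives $0$. It does follow from what you proved: $R$ vanishes on the rows and columns belonging to $\mathcal N$, and $J$ vanishes on the $(\ker E,\mathcal N)$ and $(\mathcal N,\ker E)$ blocks. Hence $\ker A=\ker A^*=\mathcal N$, and $A$ is unitarily similar to $\diag(J_{3,3}-R_{3,3},0)$ with injective, hence invertible, first block.
\item Your \emph{main obstacle} is already settled. $J_{3,4}=0$, and likewise $J_{3,5}=J_{4,4}=J_{4,5}=J_{5,5}=0$, is exactly the statement $J_{\ker E,\mathcal N}=0$. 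The SVD only applies unitary changes of basis inside $\operatorname{range}E$ and inside $\mathcal N$. These keep the leading $E$-block positive definite and preserve all zero blocks. They also give $J_{4,1}=\Sigma_1$ invertible, with the block-5 rows of $J$ identically zero.
\end{itemize}
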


\begin{corollary}\label{cor} Let $E,J,R\in\mathbb C^{n,n}$ with $E=E^*\geq 0$, $R=R^*\geq 0$ and $J=-J^*$  be in staircase form \eqref{staircase:EJR}.

\begin{itemize}
\item [a)] The pencil $\lambda E-(J-R)$ is regular if and only if $n_5=0$ if and only if  $\ker \mat{c} E\\ J\\ R\rix\neq \{0\}$.
\item [b)] The pencil is regular and of index at most one if and only if $n_1=n_4=n_5=0$.
\item [c)] The finite eigenvalues of $\lambda E-(J-R)$ are the eigenvalues of\\ $\lambda E_{2,2}- (-J_{3,2}^*-R_{3,2}^*)(J_{3,3}-R_{3,3})^{-1} (J_{3,2}-R_{3,2})$.
\end{itemize}
\end{corollary}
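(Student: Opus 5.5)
We work throughout with the transformed pencil $\lambda\hat E-(\hat J-\hat R)=P^*(\lambda E-(J-R))P$ from Lemma~\ref{lem:tSF}. A unitary equivalence changes neither regularity, nor the index, nor the finite eigenvalues, nor the dimension of $\ker\mat{c}E\\ J\\ R\rix$. So it suffices to read everything off the block structure \eqref{staircase:EJR}.

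The key computation is a block determinant formula. Write the pencil in block rows/columns $1,\dots,5$.
\begin{itemize}
\item Block row $4$ has a single nonzero block, $-J_{4,1}$, in column $1$.
\item Block column $4$ has a single nonzero block, $J_{4,1}^*$, in row $1$.
\end{itemize}
Assume $n_5=0$. A Laplace (block) expansion along these, using that $J_{4,1}$ is invertible, gives
\[
\det(\lambda\hat E-\hat J+\hat R)=c\,\det\mat{cc}\lambda E_{2,2}-J_{2,2}+R_{2,2} & J_{3,2}^*+R_{3,2}^*\\ -(J_{3,2}-R_{3,2}) & -(J_{3,3}-R_{3,3})\rix ,\qquad c\neq 0 \text{ constant}.
\]
Since $J_{3,3}-R_{3,3}$ is invertible, a Schur complement reduces this to $\tilde c\det\bigl(\lambda E_{2,2}-(J_{2,2}-R_{2,2})-(-J_{3,2}^*-R_{3,2}^*)(J_{3,3}-R_{3,3})^{-1}(J_{3,2}-R_{3,2})\bigr)$. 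This reduced pencil is the one in c), where we read its constant part as including the $J_{2,2}-R_{2,2}$ term. Moreover, $E_{2,2}>0$ as a principal submatrix of the positive definite leading block, so the leading coefficient $\det E_{2,2}$ is nonzero.

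For a), the determinant formula shows the pencil is regular when $n_5=0$. Conversely, if $n_5>0$, every unit vector in block $5$ lies in the kernel of $\hat E,\hat J,\hat R$ simultaneously. Such a vector annihilates the pencil for all $\lambda$, so the pencil is singular. The same argument shows that a common kernel vector of $E,J,R$ always forces singularity. When $n_5=0$ the common kernel is trivial, since otherwise the pencil would be singular, contradicting the determinant formula. Hence the kernel condition in a) is to be read as $\ker\mat{c}E\\ J\\ R\rix=\{0\}$.

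For c), the finite eigenvalues are exactly the roots of the determinant. By the formula above these are the eigenvalues of the stated Schur-complement pencil.

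For b), first suppose $n_1=n_4=n_5=0$. Then $\hat E=\diag(E_{2,2},0)$ with $E_{2,2}>0$, and the $(3,3)$ block $J_{3,3}-R_{3,3}$ of the constant part is invertible. Block elimination of the third block then gives a Weierstra\ss-type decoupling into an ODE part of size $n_2$ and a purely algebraic part $0=(J_{3,3}-R_{3,3})x_3+\dots$. Hence all infinite Kronecker blocks have size one, and the index is at most one.

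Conversely, regularity forces $n_5=0$ by a), so we must show that $n_1=n_4>0$ produces index two. The plan is to construct a Jordan chain of length two at $\infty$.
\begin{itemize}
\item Take $v_1=(0,0,0,x,0)$ with $x\neq 0$. Then $\hat E v_1=0$, and $(\hat J-\hat R)v_1=(-J_{4,1}^*x,0,0,0,0)\neq 0$.
\item Because the leading $2\times 2$ block of $\hat E$ is positive definite, there is $w$ supported in blocks $1,2$ with $\hat E w=(\hat J-\hat R)v_1$.
\end{itemize}
Thus $\{v_1,w\}$ is a chain at $\infty$ of length two, so the index is at least two.

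The main obstacle is making this last step rigorous. One must check that a chain of this form really corresponds to a nilpotent Kronecker block of size at least two, and not merely to a degenerate chain. To be safe, I would instead verify index two by counting: $\operatorname{rank}\hat E=n_1+n_2$, while by the determinant formula the degree of $\det$ is only $n_2$. Hence the infinite eigenvalue has algebraic multiplicity $n-n_2$ but geometric multiplicity $n-n_1-n_2$. This forces blocks of size greater than one exactly when $n_1>0$, and Theorem~\ref{thm:singindspec}(iii) caps their size at two.
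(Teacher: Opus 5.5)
Your approach is the right one. The paper gives no separate proof of this corollary and simply reads it off the staircase form of Lemma~\ref{lem:tSF}. Your argument makes that reading rigorous: a block Laplace expansion using the lone blocks $-J_{4,1}$ in block row~4 and $J_{4,1}^*$ in block column~4, followed by a Schur complement on $-(J_{3,3}-R_{3,3})$. Ordering the block rows as $4,2,3,1$ makes the pencil block lower triangular, which is a clean way to justify the expansion. Your treatment of a) is correct, including your reading of the kernel condition as $\ker\mat{c}E\\ J\\ R\rix=\{0\}$; the printed $\neq$ is a typo.

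There is one concrete error: the sign in your Schur complement, which makes your formula in c) false. Take $A=\lambda E_{2,2}-J_{2,2}+R_{2,2}$, $B=J_{3,2}^*+R_{3,2}^*$, $C=-(J_{3,2}-R_{3,2})$ and $D=-(J_{3,3}-R_{3,3})$. Then
\[
A-BD^{-1}C=\lambda E_{2,2}-\Bigl[(J_{2,2}-R_{2,2})-(-J_{3,2}^*-R_{3,2}^*)(J_{3,3}-R_{3,3})^{-1}(J_{3,2}-R_{3,2})\Bigr].
\]
So the term $(-J_{3,2}^*-R_{3,2}^*)(J_{3,3}-R_{3,3})^{-1}(J_{3,2}-R_{3,2})$ enters with the opposite sign from the one you wrote; you matched the sign of the printed statement, which is itself wrong. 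A scalar check confirms this. Take $n_1=n_4=n_5=0$, $n_2=n_3=1$, $E_{2,2}=1$, $J_{3,2}=1$, $R_{3,3}=1$, and all other blocks zero. The pencil is $\mat{cc}\lambda & 1\\ -1 & 1\rix$ with determinant $\lambda+1$. Your reduced pencil is $\lambda-1$, whose eigenvalue $+1$ would even contradict Theorem~\ref{thm:singindspec}(i). So do not adjust your computation to fit the printed formula; state the corrected one above. It is also what you get by eliminating $x_3$ from the third block row of the DAE.

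You should also add one sentence on c) when $n_5>0$. Then the pencil is $\diag(P_{1:4}(\lambda),0)$ with normal rank $n-n_5$. Its finite eigenvalues are therefore those of the regular part $P_{1:4}$, to which your determinant formula applies.

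For b), your worry about the Jordan chain is unfounded, because in that direction you already assume regularity. For a regular pencil $\lambda E-A$, index at most one is equivalent to $\{v:\ Ev=0,\ Av\in\operatorname{im}E\}=\{0\}$. This set transforms covariantly under equivalence, and in Weierstra\ss{} form it is $\ker N\cap\operatorname{im}N$. Hence your pair $v_1,w$ already proves that the index is at least two.

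Your counting argument is also correct. The infinite eigenvalue has algebraic multiplicity $n-n_2$, while the number of infinite blocks is $\dim\ker\hat E=n-n_1-n_2$. Once $n_5=0$, this gives both directions of b) at once. The appeal to Theorem~\ref{thm:singindspec}(iii) is unnecessary, since you only need index $>1$.
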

In \cite{AchAM21} the following refinement of the staircase form \eqref{staircase:EJR} has been presented.
\begin{corollary}\label{cor:almostkcf}
Let $E,J,R\in\mathbb C^{n,n}$ satisfy $E=E^*\geq 0$, $R=R^*\geq 0$ and $J=-J^*$.
Then there exist nonsingular matrices  $L,Z\in\mathbb C^{n,n}$, such that
\begin{eqnarray}
\tilde E :=L\ E\ Z
&=:&\begin{bmatrix}
E_{1,1} & 0& 0 & 0 & 0 \\
0&  E_{2,2}& 0 & 0 & 0 \\
0  & 0 & 0 & 0 & 0 \\
0  & 0 & 0 & 0 & 0 \\
0  & 0 & 0 & 0 & 0
\end{bmatrix}, \nonumber\\
\tilde R :=L\ R\ Z
&=:&\begin{bmatrix}
0 & 0 & 0& 0 & 0 \\
0 & R_{2,2}& 0 &0 & 0  \\
0 & 0 & I_{n_3} &0 & 0 \\
0  & 0 & 0 & 0 & 0 \\
0  & 0 & 0 & 0 & 0
\end{bmatrix}, \label{kronecker:EJR} \\
\tilde J :=L\ J\ Z
&=:&\begin{bmatrix}
0 & 0 & 0 & -I_{n_4} & 0\\
0 & J_{2,2} & 0& 0 & 0\\
0& 0 & 0&  0 & 0\\
I_{n_4} & 0& 0 & 0 &0\\
 0 & 0 & 0 & 0 & 0
\end{bmatrix},
\nonumber
\end{eqnarray}
with $E_{11}>0$ diagonal, $E_{22}>0$, and $R_{22}\geq 0$.
\end{corollary}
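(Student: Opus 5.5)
We obtain Corollary~\ref{cor:almostkcf} from the unitary staircase form of Lemma~\ref{lem:tSF} by block elimination. We use nonsingular (not unitary) transformations $L$ from the left and $Z$ from the right, and never require a congruence. Let $P$ be the unitary matrix of Lemma~\ref{lem:tSF}, and write $\hat E,\hat J,\hat R$ for the staircase matrices. Since $L$ and $Z$ act independently, we may apply any block row operation on the pencil $\lambda\hat E-(\hat J-\hat R)$ and any block column operation, provided each operation is applied simultaneously to $E$, $J$ and $R$. Each operation must also preserve the zero pattern that has already been achieved.

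\emph{Step 1: remove the coupling through block row/column 4.} The block $J_{4,1}$ is invertible. First scale block row $4$ by $J_{4,1}^{-1}$, so that the $(4,1)$ entry of $\tilde J$ becomes $I$. The only nonzero entries in block column $4$ are $-J_{4,1}^*$ in block row $1$ of $J$, and row $4$ has nonzero entries only in column $1$. Column $1$ of $E-$, $J-$ and $R-$entries in rows $1,2,3$ can therefore be eliminated by row operations that subtract multiples of block row $4$. Because row $4$ is zero in $E$ and $R$ except in column $1$, where $E$ and $R$ also vanish, these operations change only column $1$ of $J$. In the same way, block column $4$ of $J$ has its single nonzero entry in row $1$. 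Subtracting multiples of column $4$ from columns $1,2,3$ therefore clears row $1$ of $J$ and $R$ in columns $1,2,3$; column $4$ vanishes in $E$ and $R$. After scaling column $4$ by $-(J_{4,1}^*)^{-1}$, the $(1,4)$ entry becomes $-I$. Careful bookkeeping is needed here, because the $E$ and $R$ entries in row/column $1$ are not produced by $J$. More precisely, the entries $\hat R_{1,j}$ and $\hat J_{1,j}$ (for $j\le 3$) sit in row $1$, and they are killed by column operations with column $4$ only if those entries lie in $J$. The $R$-parts are instead removed by row operations: subtract multiples of row $4$, whose only nonzero entry is in $J$, column $1$. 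This is the step that needs the most care. We order the eliminations so that the block of row $1$ and the block of column $1$ are cleared of $R$ and $J$ entries using the invertibility of $J_{4,1}$ in both directions. The expected outcome is that $E_{1,1}$ and the $E_{2,1}$ coupling are the only remaining nonzero entries of row/column $1$.

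\emph{Step 2: decouple blocks 1 and 2 of $E$.} The leading $2\times 2$ block of $\hat E$ is positive definite, so $E_{1,1}>0$. Use a block Gaussian column operation (column $2$ minus column $1$ times $E_{1,1}^{-1}E_{2,1}^*$) together with the analogous row operation. This removes $E_{2,1}$ and replaces $E_{2,2}$ by its Schur complement, which is positive definite. The $J$ and $R$ entries created in row/column $1$ are then cleared again as in Step 1 via block $4$. Finally, diagonalize $E_{1,1}$ by a unitary similarity acting on block $1$ from both sides, compensated on block $4$.

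\emph{Step 3: decouple block 3.} Since $J_{3,3}-R_{3,3}$ is invertible, multiply block row $3$ by $-(J_{3,3}-R_{3,3})^{-1}$ so that the $(3,3)$ block of $J-R$ becomes $-I$. Put this entirely into $\tilde R=I$ and $\tilde J=0$; the split of $J-R$ is ours to choose, because only the pencil is being transformed. Then eliminate the entries of $J-R$ in row $3$ and column $3$ against this identity block using row and column operations; $E$ vanishes in row and column $3$, so it is unaffected. The resulting update of the $(2,2)$ block is exactly the Schur complement of Corollary~\ref{cor}(c).

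The main obstacle is showing that the updated $(2,2)$ block $-(J_{3,2}^*+R_{3,2}^*)(J_{3,3}-R_{3,3})^{-1}(J_{3,2}-R_{3,2})$ can again be split as $J_{2,2}-R_{2,2}$ with $J_{2,2}$ skew-Hermitian and $R_{2,2}\ge 0$. We plan to write $W=(J_{3,3}-R_{3,3})^{-1}$ and use that its Hermitian part $-W^*R_{3,3}W$ is negative semidefinite. The Hermitian part of the update then becomes $-R_{2,2}+R_{3,2}^*\cdots$, which we would verify to be $\le 0$ by using $\hat R\ge 0$ through a Schur complement argument on its $(2,3)$ principal submatrix.
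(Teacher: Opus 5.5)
\textbf{Verdict: genuine gap.} Step 2 fails as written, and Step 1 rests on a reading of the statement that cannot be right.

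\textbf{The reading of the statement.} Taken literally, with $\tilde R=LRZ$ and $\tilde J=LJZ$ separately, the corollary is false, because $\operatorname{rank}(LRZ)=\operatorname{rank}R$. For example, take $n=1$, $E=R=0$, $J=i$. Then $\tilde E=0$ forces $n_1=n_2=n_4=0$. Either $n_3=1$ and $\tilde R=1\neq LRZ=0$, or $n_5=1$ and $\tilde J=0\neq LJZ$. The statement must therefore be read as $LEZ=\tilde E$, $L(J-R)Z=\tilde J-\tilde R$, which is what you use in Step 3. You should work with $A:=J-R$ from the start. Your mechanism for removing $R_{1,1},R_{2,1},R_{3,1}$ by subtracting multiples of block row $4$ cannot change $R$ at all, since that block row of $\hat R$ is zero. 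In the pencil reading, Step 1 is simply this: subtract multiples of block row $4$ to clear $A_{1,1},A_{2,1},A_{3,1}$, and multiples of block column $4$ to clear $A_{1,2},A_{1,3}$. These operations touch only block column $1$ and block row $1$, respectively.

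\textbf{The gap in Step 2.} You pivot on $E_{1,1}$. The operation ``column $2$ minus column $1$ times $E_{1,1}^{-1}E_{2,1}^*$'' also acts on block row $4$ of $A$, whose $(4,1)$ block is invertible. It therefore creates the constant block $-J_{4,1}E_{1,1}^{-1}E_{2,1}^*$ in position $(4,2)$. The row analogue creates $E_{2,1}E_{1,1}^{-1}J_{4,1}^*$ in position $(2,4)$, up to the scalings of Step 1. These blocks are not in block row or column $1$, and block $4$ cannot remove them:
\begin{itemize}
\item Because the $(4,4)$ block is zero, operations with block row $4$ never change block column $4$, and operations with block column $4$ never change block row $4$.
\item Clearing $(2,4)$ with block row $1$, the only row with an invertible block in column $4$, puts exactly $\lambda E_{2,1}$ back into position $(2,1)$. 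The procedure is circular.
\end{itemize}

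\textbf{The fix: pivot on $E_{2,2}$.} $E_{2,2}$ is positive definite because the leading $2\times 2$ block of $\hat E$ is. Subtract $E_{2,1}^*E_{2,2}^{-1}$ times block row $2$ from block row $1$, and block column $2$ times $E_{2,2}^{-1}E_{2,1}$ from block column $1$. Since $A_{2,4}=0$ and $A_{4,2}=0$, this:
\begin{itemize}
\item leaves the $(1,4)$ and $(4,1)$ blocks intact;
\item keeps $E_{2,2}$ and turns $E_{1,1}$ into $E_{1,1}-E_{2,1}^*E_{2,2}^{-1}E_{2,1}>0$;
\item creates only constant blocks in block row/column $1$, which Step 1 then removes.
\end{itemize}
The blocks $A_{2,2},A_{2,3},A_{3,2},A_{3,3}$ are never touched before Step 3. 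So the finite part is $\lambda E_{2,2}-S$ with $S=A_{2,2}-A_{2,3}A_{3,3}^{-1}A_{3,2}$. Note the term $A_{2,2}=J_{2,2}-R_{2,2}$, which the formula you copied from Corollary~\ref{cor}(c) omits.

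\textbf{Your remaining obstacle.} This needs no expansion of $W$. For any $x_2$, put $x_3=-A_{3,3}^{-1}A_{3,2}x_2$ and $x=[x_2^T,\,x_3^T]^T$. Then
$x_2^*Sx_2=x^*\begin{bmatrix}A_{2,2}&A_{2,3}\\A_{3,2}&A_{3,3}\end{bmatrix}x$, so
$\Re(x_2^*Sx_2)=-x^*\begin{bmatrix}R_{2,2}&R_{3,2}^*\\R_{3,2}&R_{3,3}\end{bmatrix}x\le 0$.
Hence $\tfrac12(S-S^*)$ and $-\tfrac12(S+S^*)\ge 0$ supply the required skew-Hermitian $J_{2,2}$ and positive semidefinite $R_{2,2}$.
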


Based on the spectral properties of dHDAE systems, in the next subsection we give a characterization of robust asymptotic stability.
\subsection{Robust asymptotic stability}\label{sec:robasystab}
The following result gives a characterization of robust asymptotic stability for dHDAE systems.
\begin{theorem}\label{thm:stabledh}
    Consider the  dH
    pencil $\lambda E-(J-R)$ associated with the dHDAE~\eqref{dhdae1}. Let $N(E)$ be a matrix with columns that span the right nullspace of $E$. Then the system is robustly asymptotically stable if and only if the following conditions hold:
    \begin{itemize}
        \item [a)] the pencil is regular;
        \item [b)] the index of the pencil is at most one and every principal submatrix of  $N(E)^*(J-R)N(E)$ is nonsingular.
        \item [c)] the maximal real part $\mu$ of a finite eigenvalue of $\lambda E-(J-R)$ (the spectral abscissa) is negative.
    \end{itemize}
    The boundary of the set of robustly asymptotically stable pencils are those pencils where one of the three conditions is violated.
\end{theorem}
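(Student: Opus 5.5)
The plan is to reduce everything to the staircase form of Lemma~\ref{lem:tSF} and Corollary~\ref{cor}. By Definition~\ref{def:robasyst}, robust asymptotic stability means three things: the pencil is regular, it has index at most one, and its finite spectrum lies in the open left half plane. Conditions a) and c) restate the first and third requirements directly. The real content is therefore the algebraic characterization of index $\le 1$ in b), together with the claim about the boundary.

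\emph{Step 1: basis invariance.} Since $P$ in Lemma~\ref{lem:tSF} is unitary, we may assume the triple is already in the form \eqref{staircase:EJR}. Replacing $N(E)$ by $N(E)W$ with $W$ invertible changes $N(E)^*(J-R)N(E)$ only by congruence. Hence nonsingularity of $N(E)^*(J-R)N(E)$ does not depend on the choice of basis.

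\emph{Step 2: reading off the compression.} Because $\begin{bmatrix} E_{1,1} & E_{2,1}^* \\ E_{2,1} & E_{2,2}\end{bmatrix}>0$, we may take $N(E)=\begin{bmatrix}0 & I_{n_3+n_4+n_5}\end{bmatrix}^\top$. Reading off the blocks of $\hat J-\hat R$ gives
\[
N(E)^*(J-R)N(E)=\diag\big(J_{3,3}-R_{3,3},\,0_{n_4},\,0_{n_5}\big).
\]
Since $J_{3,3}-R_{3,3}$ is invertible by Lemma~\ref{lem:tSF}, this matrix is nonsingular if and only if $n_4=n_5=0$. Because $n_1=n_4$, this is the same as $n_1=n_4=n_5=0$. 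By Corollary~\ref{cor}~b), that holds exactly when the pencil is regular and of index at most one. This settles the equivalence with a), b) and c), using Corollary~\ref{cor}~c) to locate the finite eigenvalues.

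\emph{Step 3: the principal-submatrix clause (main obstacle).} I would interpret this clause as the requirement that index $\le 1$ persists under small structure-preserving perturbations. A perturbation $E+\Delta_E\ge 0$ can shrink the kernel to a subspace $N(E)W$, and the new compression is then $W^*N(E)^*(J-R)N(E)W$. So robustness requires every such compression, including the coordinate ones, i.e.\ the principal submatrices, to be nonsingular.

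For necessity, I would construct a $\Delta_E\ge 0$ of arbitrarily small norm whose kernel is exactly the span of chosen columns of $N(E)$. A singular compression would then produce a nearby structured pencil with index $>1$ or a singular pencil. For sufficiency, I would use that the Hermitian part $-N(E)^*RN(E)\le 0$ is inherited by all compressions, and argue by continuity. I expect most of the work to be here, and in particular in clarifying whether coordinate submatrices suffice or whether general compressions are needed. For instance, a matrix with negative semidefinite Hermitian part such as $\begin{bmatrix}0&1\\-1&0\end{bmatrix}$ is nonsingular while its diagonal entries vanish, so the clause is genuinely stronger than nonsingularity alone.

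\emph{Step 4: the boundary statement.} Each of the three conditions is an open condition in $(E,J,R)$ within the structured class:
\begin{itemize}
\item regularity is open because a nonvanishing minor persists under small perturbations;
\item nonsingularity of the compressions is open by continuity of determinants;
\item negativity of the spectral abscissa is open because, once a) and b) hold, the finite eigenvalues depend continuously on the data via Corollary~\ref{cor}~c).
\end{itemize}
The set of robustly asymptotically stable dH pencils is therefore open. Its boundary consists of limits of stable pencils at which at least one condition fails, which is the final assertion.
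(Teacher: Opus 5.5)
Your Steps 1--2 are correct. They give a useful fact: a dH triple is regular of index at most one if and only if $N(E)^*(J-R)N(E)$ is nonsingular. Your necessity argument for the principal-submatrix clause is also sound. You take $\Delta_E=\epsilon P\ge 0$, with $P$ the orthogonal projector onto the complement inside $\ker E$ of the chosen subspace, and apply Step~2 to the perturbed triple. This even covers a case the paper's item b) skips, namely index one with a singular principal submatrix.

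\textbf{The gap is sufficiency.} Step~3 defers it, and two of the openness claims in Step~4 that are meant to replace it are false.

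\emph{Compressions.} ``Continuity of determinants'' does not apply, because the compression lives on $\ker(E+\Delta_E)$, whose dimension drops. By your own construction that kernel can be \emph{any} subspace of $\ker E$, and coordinate principal submatrices do not control this. Take $E=0$, $J=\begin{bmatrix}0&1\\-1&0\end{bmatrix}$, $R=\begin{bmatrix}1&1\\1&1\end{bmatrix}$, $N(E)=I$. These satisfy a)--c), and all principal submatrices of $J-R=\begin{bmatrix}-1&0\\-2&-1\end{bmatrix}$ are nonsingular. Yet $\Delta_E=\frac{\epsilon}{2}\begin{bmatrix}1&1\\1&1\end{bmatrix}$ gives $\det(\lambda(E+\Delta_E)-(J-R))\equiv 1$ with $\dim\ker(E+\Delta_E)=1$, i.e.\ index two.

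\emph{Spectral abscissa.} Openness of c) fails even if one requires all compressions to be nonsingular. When the rank of $E$ increases, eigenvalues enter from infinity, and Corollary~\ref{cor}~c) cannot track them because $n_2$ and $n_3$ jump. For $n=1$, the triple $E=0$, $J=i$, $R=0$ meets every condition, yet $E+\epsilon$ has the eigenvalue $i/\epsilon\in i\R$.

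So the proposal does not prove the statement, and it cannot be completed with the clause as written.

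\textbf{The missing ingredient.} Your sketch uses only $-N(E)^*RN(E)\le 0$; what is needed is strict definiteness $N(E)^*RN(E)>0$ (Remark~\ref{rem:R22posdef}).
\begin{itemize}
\item Sufficiency: a unit vector in $\ker(E+\Delta_E)$, or an eigenvector of the perturbed pencil for $i\omega$ with $|\omega|$ large, lies close to $\ker E$. It therefore cannot lie in $\ker(R+\Delta_R)$. The range $|\omega|\le\Omega$ is handled by a positive lower bound on $\sigma_{\min}(i\omega E-(J-R))$ over that compact set. Together with Theorem~\ref{thm:singindspec}, this gives regularity, index at most one, and negative spectral abscissa for all nearby triples.
\item Necessity, for perturbations that can raise the rank of $E$ as in $\mathcal S_i$ (under $\mathcal S_d$ the kernel of $E$ cannot shrink, so the clause is not needed). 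Let $0\neq x\in\ker E\cap\ker R$. If $x^*Jx=0$, your compression argument destroys index $\le 1$. Otherwise take $\Delta_E=yy^*/(x^*y)\ge 0$ with $y=Jx/(i\omega)$ and $\omega$ of the sign of $-ix^*Jx$. This makes $i\omega(E+\Delta_E)-(J-R)$ singular, with $\|\Delta_E\|=\|Jx\|^2/|\omega\,x^*Jx|\to 0$.
\end{itemize}
The paper's own sufficiency argument is only the sentence ``an increase of the rank of $E$ will again lead to a system of index at most one'', so it has the same hole. Your doubt in Step~3 was justified.

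\textbf{Boundary claim.} Step~4 gives only one inclusion. The other, that every violating triple is a limit of robustly stable ones, follows from the perturbation $(\epsilon I,0,\epsilon I)$ and Theorem~\ref{thm:Rposdef}. The paper instead argues case by case.
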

\proof
If the three conditions a)-c) hold, then in \eqref{staircase:EJR} we have that $n_1=n_4=n_5=0$, $E_{22}=E_{22}^*>0$ and $J_{33}-R_{33}$ is nonsingular. Sufficiently small structured perturbations will not destroy the nonsingularity of these two matrices. So it remains to study the case that the rank of $E$ is increased. Since all principal submatrices of $N(E)^*(J-R)N(E)$ are nonsingular, an increase of the rank of $E$ will again lead to a system of index at most one.

For the converse direction we show that a violation of any of the three conditions yields a pair on  the boundary of the robustly asymptotically stable dH pencils.

a) If the pencil is singular, then it has
a zero row and column in the staircase form. Any arbitrary small positive perturbation in the corresponding
diagonal position of $E$ leads to an eigenvalue $0$ of the pencil, hence the pair is on the boundary of the robustly asymptotically stable dH pencils.

b) If the index is bigger than  one, then in \eqref{kronecker:EJR} we have $n_1=n_4>0$. Looking at the structure of the subpencil consisting  of the first and forth block row and column, we have $n_4$ copies of
Example~\ref{ex:perind} and since $n_4>0$ a similar perturbation  will put the pencil on the boundary of the asymptotically stable systems.

c) If $\mu=0$ then the system already has purely imaginary eigenvalues.
\eproof

\begin{remark} \label{rem:R22posdef}{\rm
A sufficient condition for the second part of b) in Theorem~\ref{thm:stabledh} is that $N(E)^*RN(E)>0$ because then every principal submatrix has this property as well.
}
\end{remark}
By structured perturbations of dHDAEs, finite eigenvalues in the open left half plane may move to the imaginary axis, and the regularity or the index of the system may change. 
In the next subsection we present distance measures under structured perturbations that characterize when this happens.
\subsection{Structured distances}\label{sec:structdist}\

In this subsection, we consider perturbations in the coefficient matrices $E$, $J$ and $R$ of a dHDAE system of the form~\eqref{dhdae1}. These take the form
\begin{equation}
    (\tilde E, \tilde J-\tilde R)=( E+\Delta_E, J+\Delta_J - (R+\Delta_R)).
\end{equation}
In order to measure these perturbations in $E$, $J$ and $R$, we consider the following norm for a tuple $(\Delta_E,\Delta_J,\Delta_R)\in (\C^{n,n})^3$ given by
\begin{equation}
    \nnrm{(\Delta_E,\Delta_J,\Delta_R)}=\sqrt{\|\Delta_E\|^2+\|\Delta_J\|^2+\|\Delta_R\|^2}.
\end{equation}
%
We study the following structured distances.

\begin{definition}\label{def:distances}
	Consider a robustly  asymptotically stable dHDAE system of the form~\eqref{dhdae1}, and the following sets  of structured perturbations to the matrices $E, J$ and $R$,
	\begin{align}
		\mathcal S_d(E,J,R)&:= \{(\Delta_E,\Delta_J,\Delta_R):\Delta_E\leq 0, E+\Delta_E\geq 0, \Delta_J^*=-\Delta_J, \Delta_R\leq 0, R+\Delta_R\geq 0 \}, \label{set:Sd}\\
		\mathcal S_i(E,J,R)&:= \{(\Delta_E,\Delta_J,\Delta_R): \Delta_E^*=\Delta_E,E+\Delta_E\geq 0, \Delta_J^*=-\Delta_J, \Delta_R^*=\Delta_R, R+\Delta_R\geq 0 \}.\label{set:Si}
	\end{align}
	Let $\mathbb S\in \{\mathcal S_d,\mathcal S_i\}$. Then, we consider the  distances:
	\begin{enumerate}
		\item The distance to singularity $d_{sing}^{\mathbb S}(E,J,R)$ under perturbations from  $\mathbb S\in \{\mathcal S_d,\mathcal S_i\}$, is defined by
		\begin{equation}\label{def:dist-sing}
			d_{sing}^{\mathbb S}(E,J,R) := \inf\{\nnrm{(\Delta_E,\Delta_J,\Delta_R)}~:~(E+\Delta_E,J+\Delta_J-R-\Delta_R) \text{ is singular}\}.
		\end{equation}
		\item The distance to higher index $d_{hi}^{\mathbb S}(E,J,R)$ under perturbations from  $\mathbb S\in \{\mathcal S_d,\mathcal S_i\}$, is defined by
		\begin{equation}\label{def:dist-hi}
			d_{hi}^{\mathbb S}(E,J,R) := \inf\{\nnrm{(\Delta_E,\Delta_J,\Delta_R)}~:~(E+\Delta_E,J+\Delta_J-R-\Delta_R) \text{ has index }>1\}.
		\end{equation}
		\item The distance to a system with eigenvalues on the imaginary axis $d_{im}^{\mathbb S}(E,J,R)$ under perturbations from the set $\mathbb S\in \{\mathcal S_d,\mathcal S_i\}$, is defined by
		\begin{equation}\label{def:dist-im}
			d_{im}^{\mathbb S}(E,J,R) := \inf\{\nnrm{(\Delta_E,\Delta_J,\Delta_R)}~:~\Lambda(E+\Delta_E,J+\Delta_J-R-\Delta_R)\cap i\mathbb R \neq \emptyset \}.
		\end{equation}
	\end{enumerate}
\end{definition}

By  Theorem~\ref{thm:stabledh}, the structured distance  of a robustly asymptotically stable dHDAE system to the boundary of this set is then given by
\begin{equation*}\label{def:dist-inst}
	d_{inst}^{\mathbb S}(E,J,R):= \min \{d_{sing}^{\mathbb S}(E,J,R), d_{hi}^{\mathbb S}(E,J,R),d_{im}^{\mathbb S}(E,J,R)\}.
\end{equation*}
%

In addition to full perturbations of the dHDAE system, we will also consider partial perturbations, e.g.
the partial perturbation sets $\mathcal S_d(J,R):=\mathcal S_d(0,J,R)$ and $\mathcal S_i(J,R):= \mathcal S_i(0,J,R)$.
%
%

\subsection{Checking robust asymptotic stability}\label{sec:distalg}

Based on  Theorem~\ref{thm:stabledh},
we have the following procedure to check, via a numerically robust algorithm, whether for a dHDAE of the form \eqref{dhdae1} one of the conditions  for robust asymptotical stability is violated. This procedure also leads  to bounds for the distance to the nearest dHDAE which is on the boundary of the set of robustly asymptotically stable dHDAEs.

{\bf \label{alg:dist} Algorithm to check robust asymptotic stability}

Step 1: Check whether $E$ is invertible by computing the 
spectral decomposition $E=Q_e \Lambda_E Q_e^*$ and if $E$ is singular then determine the right nullspace $N(E)$.\\
The distance to the nearest dHDAE system with infinite eigenvalues then is   $d_{DAE}^{\mathbb S}:=\lambda_{\min}(E)$.

Step 2: Check the regularity of the pencil by computing the singular value decomposition of %
\[
U_s^*\mat{c} E\\ J\\ R\rix V_s=\Sigma_s.
\]
The distance to the nearest singular dissipative Hamiltonian system then is
\[
d_{sing}^{\mathbb S}:=\sigma_{\min}\left (\mat{c} E\\ J\\ R\rix \right ).
\]
This distance can also be determined via the algorithm in \cite{GugM22}.

Step 3: Check the index by computing the singular value decomposition of
\[
U^*_iN(E)^*(J-R) N(E)V_i=\Sigma_i.
\]
The distance to the nearest system of index two is then bounded via $d_{hi}^{\mathbb S}\geq \sigma_{\min}(N(E)^* (J-R) N(E))$, see Theorem~\ref{thm:dist-hiEJR} for an exact characterization.

Step 4. Compute the spectral decomposition
\[
U_r^*N(E)^*RN(E)U_r=\Lambda_r.
\]
If $\Lambda_r$ is nonsingular, then  $\Delta_R=-\lambda_{\min} (\Lambda_r) U_rx_r x_r^*U_r^*$ is the smallest perturbation such that $R+\Delta_R$ is singular. Here $x_r$ is a normalized eigenvector to the minimal eigenvalue $\lambda_{\min}(\Lambda_r)$.\\
The distance to the nearest system with singular $R$ coefficient is then $d_r^{\mathbb S}:=\lambda_{\min}(\Lambda_r)$.\\

Step 5. Compute the Schur complement pencil associated with the finite eigenvalues
from the staircase form  in Lemma~\ref{lem:tSF} or the orthogonal condensed form in \cite{MehX15}. Let
\[
P_f(\lambda):=\lambda E_f-(J_f-R_f)=\lambda E_{2,2}- (-J_{3,2}^*-R_{3,2}^*)(J_{3,3}-R_{3,3})^{-1} (J_{3,2}-R_{3,2}).
\]

Determine the smallest perturbation from the corresponding sets
$\mathcal S_d(E_f,J_f,R_f)$, $\mathcal S_i(E_f,J_f,R_f)$
associated with the reduced pencil $\lambda E_f-(J_f-R_f)$ for which an eigenvalue of $P_f$ reaches the imaginary axis
\[
d_{im}^{\mathbb S}(E_f,J_f,R_f).
\]
This procedure directly determines an upper bound for the boundary of the set of robustly asymptotically stable dHDAE systems, since we can definitely destroy the robust asymptotic stability by a perturbation of the size $
\min(d_{im}^{\mathbb S},d_{sing}^{\mathbb S},d_{im}^{\mathbb S})$.
The different bounds are related via  the inequalities
\begin{eqnarray*}
&& \quad \max (\sigma_{\min} (E),\sigma_{\min} (N(E)^*(J-R)N(E)),d_r^{\mathbb S})\leq d_{hi},
\\
&&\max (\sigma_{\min} (E),\sigma_{\min} (J),\sigma_{\min} (R))\leq d_{sing}^{\mathbb S} = \sigma_{\min} \left (\mat{c}  E\\ J\\ R\rix \right ).
\end{eqnarray*}
We also have the following sufficient
conditions.
\begin{theorem}\label{thm:Rposdef}
  Consider the pencil $\lambda E-(J-R)$ associated with the dHDAE~\eqref{dhdae1} subject to a structured perturbation $(\Delta_E, \Delta_J, \Delta_R)$.
If $R>0$ and $R+\Delta_R>0$, then the perturbed dHDAE system is robustly asymptotically stable.
\end{theorem}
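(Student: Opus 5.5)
The plan is to verify, for the perturbed coefficients $\tilde E=E+\Delta_E\geq 0$, $\tilde J=J+\Delta_J=-\tilde J^*$ and $\tilde R=R+\Delta_R>0$, the three conditions a)--c) of Theorem~\ref{thm:stabledh}, using only the structure of the perturbed triple and the hypothesis $\tilde R>0$. Everything reduces to a single identity. If $(\lambda \tilde E-(\tilde J-\tilde R))x=0$ with $x\neq 0$, then multiplying by $x^*$ from the left gives
\[
\lambda\, x^*\tilde E x = x^*\tilde J x - x^*\tilde R x .
\]
Here $x^*\tilde Ex\geq 0$ is real, $x^*\tilde Jx$ is purely imaginary, and $x^*\tilde Rx>0$. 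Taking real parts yields $\Re(\lambda)\, x^*\tilde Ex = -x^*\tilde Rx<0$.

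From this identity, c) follows at once. The quantity $x^*\tilde E x$ must be positive, so $\Re(\lambda)<0$ for every finite eigenvalue; in particular no eigenvalue lies on $i\mathbb R$, and the eigenvalue $0$ is also excluded.

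For regularity a), I would argue by contradiction. If the pencil were singular, then for every $\lambda$ there would be some $x\neq0$ in the kernel of $\lambda\tilde E-(\tilde J-\tilde R)$. Choosing $\lambda$ with $\Re(\lambda)\geq 0$ then contradicts the identity, since the left-hand real part is nonnegative while the right-hand side is negative. Equivalently, by Corollary~\ref{cor}, a common kernel vector of $\tilde E,\tilde J,\tilde R$ cannot exist because $\tilde R$ is nonsingular.

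For b), let $N=N(\tilde E)$. The matrix $N^*\tilde RN$ is positive definite, so by Remark~\ref{rem:R22posdef} every principal submatrix of $N^*(\tilde J-\tilde R)N$ is nonsingular. Indeed, for a principal submatrix $M=W^*(\tilde J-\tilde R)W$ with $W$ of full column rank, $y^*My$ has real part $-y^*W^*\tilde RWy<0$ for $y\neq0$. The index bound I would obtain from the staircase form of Lemma~\ref{lem:tSF} applied to the perturbed triple. There the fourth and fifth block rows and columns of $\hat R$ vanish, so $\tilde R>0$ forces $n_4=n_5=0$, and hence $n_1=n_4=0$. Corollary~\ref{cor}\,b) then gives regularity with index at most one.

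The main subtlety is that ``robustly asymptotically stable'' is a statement about the perturbed system together with all small structured perturbations of it. Theorem~\ref{thm:stabledh} turns this into the pointwise conditions a)--c), so it suffices to check those conditions for $(\tilde E,\tilde J,\tilde R)$. The only real step is the staircase argument that $\tilde R>0$ rules out the blocks with $n_4$ and $n_5$, and it is immediate from the zero pattern in \eqref{staircase:EJR}.
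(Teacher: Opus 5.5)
Your proof is correct. Its skeleton matches the paper's: verify conditions a)--c) of Theorem~\ref{thm:stabledh} for the perturbed triple. The two substantive steps, however, are carried out differently.

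For regularity and index, the paper only notes that $J+\Delta_J-R-\Delta_R$ is invertible and points to Remark~\ref{rem:R22posdef}. You additionally read off $n_1=n_4=n_5=0$ from the zero pattern of $\hat R$ in Lemma~\ref{lem:tSF}, which justifies the index bound explicitly through Corollary~\ref{cor}\,b).

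For the spectrum, the paper compresses the pencil to the orthogonal complement of $\ker(E+\Delta_E)$ and bounds the eigenvalues of $\lambda\hat N^*(E+\Delta_E)\hat N-\hat N^*(J+\Delta_J-R-\Delta_R)\hat N$. You instead apply your identity $\Re(\lambda)\,x^*\tilde Ex=-x^*\tilde Rx$ to eigenvectors of the full pencil. Your route is the more economical one, for three reasons:
\begin{itemize}
\item It does not require identifying a subpencil that carries the finite eigenvalues. In general that is a Schur-complement pencil as in Corollary~\ref{cor}\,c), not the plain compression.
\item It still yields an explicit margin from the imaginary axis, $\Re(\lambda)\le-\lambda_{\min}(R+\Delta_R)/\|E+\Delta_E\|$, which can replace the paper's line bound.
\item It uses only $R+\Delta_R>0$, which is an open condition. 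The same argument therefore applies verbatim to all sufficiently small further structured perturbations, so the ``robust'' reading of the statement is covered as well.
\end{itemize}
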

\proof
Since $J+\Delta_J-R-\Delta_R$ is invertible if $R+\Delta_R$ is positive definite, the pencil is robustly regular and of index at most one, since also every principal submatrix of $R+\Delta_R$ is positive definite, see Remark~\ref{rem:R22posdef}.
Let $\hat N$ be a matrix with columns that span the orthogonal complement of $N(E+\Delta_E)$.
Then the subpencil associated with the finite eigenvalues, given by
\[
\lambda \tilde E-(\tilde J-\tilde R):=\lambda {\hat N}^*(E+\Delta_E) {\hat N}-{\hat N}^*(J+\Delta_J-R-\Delta_R) {\hat N},
\]
has only eigenvalues with negative real part, on the left of the line
\[-\lambda_{\min}(\tilde E^{-1/2})^*\tilde R(\tilde E^{-1/2})i.\]
Hence,  the perturbed pencil is robustly asymptotically stable.
\eproof

Thus, to make a dHDAE system to be not robustly asymptotically stable we need to choose $\Delta_R$ at least such that $R+\Delta_R$ is singular.

Another easy bound in the case of real dHDAE systems is presented in \cite{MehMW21}.
%
%
%
%
%
%
%
%
    %
%
%
We will extend this result to the complex case below.

\subsection{Mapping results}\label{sec:mapping}
In this subsection, we recall some mapping results from the literature in a form that allows a direct application in computing the structured distances in this paper.
The following result from~\cite{MacMT08} gives minimal norm solutions to the Hermitian mapping problem with respect to the spectral norm.

%
\begin{theorem}\label{map:herm}
	Let $x \in \C^n\setminus\{0\}$ and $y\in \C^{n}$. Then
	\begin{itemize}
		\item [a)] there exists a Hermitian matrix $H \in \C^{n,n}$ such that $Hx = y$ if and only if $\Im{(x^*y)} = 0$ and we have
		\[
		\min\big\{\|H\|\; : H\in \C^{n,n},\; H^*=H, \; Hx=y\big\} = \frac{\|y\|}{\|x\|},
		\]
		and the minimum is attained by
		\begin{equation}\label{def:hatH}
			\hat H_{(x,y)} := \frac{\|y\|}{\|x\|}\mat{cc}\frac{y}{\|y\|}&\frac{x}{\|x\|} \rix \mat{cc}\frac{y^*x}{\|x\|\|y\|}&1\\1&\frac{x^*y}{\|x\|\|y\|} \rix \mat{cc}\frac{y}{\|y\|}&\frac{x}{\|x\|} \rix^*
		\end{equation}
		if $x$ and $y$ are linearly independent and by $\hat H_{(x,y)}:=\frac{yx^*}{x^*x}$, otherwise;
		\item [b)] there exists a skew-Hermitian matrix $S \in \C^{n,n}$ such that $Sx = y$ if and only if $\Re{(x^*y)} = 0$ and we have
		\[
		\min\big\{\|S\|\; :\; S ^*=-S, \; Sx=y\big\} = \frac{\|y\|}{\|x\|},
		\]
		and the minimum is attained by $\hat S:=-i\hat H_{(x,iy)}$, where $\hat{H}$ is defined in~\eqref{def:hatH}.
	\end{itemize}
\end{theorem}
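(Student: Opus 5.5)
The plan is to prove a) directly and then deduce b) from it by the substitution $S=-iH$.

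\textbf{Necessity and lower bound.} If $H^*=H$ and $Hx=y$, then $x^*y=x^*Hx\in\R$, so $\Im(x^*y)=0$. Also $\|y\|=\|Hx\|\le\|H\|\,\|x\|$, so every admissible $H$ has $\|H\|\ge \|y\|/\|x\|$. It therefore remains to construct a Hermitian $H$ with $Hx=y$ and $\|H\|=\|y\|/\|x\|$.

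\textbf{Dependent case.} If $x$ and $y$ are linearly dependent, write $y=\alpha x$. The condition $\Im(x^*y)=0$ forces $\alpha\in\R$. Then $\hat H_{(x,y)}=yx^*/(x^*x)=\alpha\,xx^*/(x^*x)$ is Hermitian, maps $x$ to $y$, and has norm $|\alpha|=\|y\|/\|x\|$.

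\textbf{Independent case.} Put $u=y/\|y\|$, $v=x/\|x\|$, $W=[u\ v]$ and $t=x^*y/(\|x\|\|y\|)$. By assumption $t\in\R$, and $|t|<1$ by Cauchy--Schwarz, since $x,y$ are independent. I look for $H=\frac{\|y\|}{\|x\|}WMW^*$ with $M\in\R^{2,2}$ symmetric, so that $H$ is automatically Hermitian.

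The two requirements reduce to conditions on $M$:
\begin{itemize}
\item[(1)] Since $W^*x=\|x\|\,[t;\,1]$, the condition $Hx=y=\|y\|\,Wu$-coordinates reduces to $M[t;\,1]=[1;\,0]$, using that $W$ has full column rank.
\item[(2)] The nonzero eigenvalues of $WMW^*$ coincide with those of $MG$, where $G=W^*W=\mat{cc}1&t\\t&1\rix>0$. Equivalently, they are the eigenvalues of the Hermitian matrix $G^{1/2}MG^{1/2}$. Hence $\|H\|=\|y\|/\|x\|$ holds exactly when $\|G^{1/2}MG^{1/2}\|=1$.
\end{itemize}

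Condition (1) leaves one free parameter in $M$. I will fix it so that $G^{1/2}MG^{1/2}$ has eigenvalues $\pm1$, i.e.\ is a Hermitian reflection. This is possible because (1) states that $G^{1/2}MG^{1/2}$ maps the unit vector $G^{1/2}[t;1]/\|x\|$-normalized image to a vector of the same length. Indeed, $\|G^{1/2}[t;1]\|^2=1-t^2+\ldots=\|G^{1/2}[1;0]\|^2\cdot(\ldots)$, and this equality holds once one rechecks that both lengths equal $1$ after normalization, since $[t;1]^*G[t;1]=1-t^2+2t^2\cdot\ldots$. A Householder-type reflection in $\R^2$ exchanging two equal-length vectors then exists, and it is Hermitian with norm $1$.

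The \textbf{main obstacle} is to carry out this $2\times2$ computation explicitly and match it with the middle factor displayed in \eqref{def:hatH}. A direct multiplication with the factor exactly as printed gives $\hat H_{(x,y)}x=(1+t^2)y+2t\|y\|x/\|x\|$, which is not $y$. So this step is where I will have to fix the correct symmetric $M$, namely the one solving (1) with $MG$ an involution, and verify (1) and (2) for it. I will present the attaining matrix as $\frac{\|y\|}{\|x\|}WMW^*$ with this $M$, noting that the printed middle factor must be read accordingly.

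\textbf{Part b).} For skew-Hermitian $S$, set $H:=iS$, which is Hermitian. Then $Sx=y$ is equivalent to $Hx=iy$. Moreover $\Im(x^*(iy))=\Re(x^*y)$, and $\|H\|=\|S\|$. Applying a) to the pair $(x,iy)$ yields the solvability criterion, the minimal norm $\|iy\|/\|x\|=\|y\|/\|x\|$, and the minimizer $\hat S=-i\hat H_{(x,iy)}$.
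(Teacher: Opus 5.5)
The paper does not prove this statement; it imports it from \cite{MacMT08}, so your argument has to stand on its own. Most of it does. The necessity of $\Im(x^*y)=0$, the lower bound $\|H\|\ge\|y\|/\|x\|$, the dependent case, the reduction (1) and (2) to a real symmetric $2\times 2$ matrix $M$, and part b) via $H=iS$ are all correct. You are also right that the middle factor cannot be used as printed. With $K:=\begin{bmatrix} t&1\\ 1&t\end{bmatrix}$ one gets $\hat H_{(x,y)}x=y$ only when $t=0$. The formula becomes correct once $K$ is replaced by $K^{-1}$, i.e.\ the attaining matrix is $\frac{\|y\|}{\|x\|}WK^{-1}W^*$, which points to a dropped inverse.

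\textbf{The gap.} In the independent case you never actually exhibit $M$. This is the one step that carries all of the content of a) there, including the ``if'' direction. The length equality you invoke is computed for the wrong vector and then hidden behind ``$\ldots$''. In fact $\|G^{1/2}[t,\ 1]^T\|^2=[t,\ 1]\,G\,[t,\ 1]^T=1+3t^2\neq 1$.

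\textbf{The fix.} Condition (1) says that $N:=G^{1/2}MG^{1/2}$ must send $a:=G^{-1/2}[t,\ 1]^T$ to $b:=G^{1/2}[1,\ 0]^T$. These are unit vectors: since $G^{-1}[t,\ 1]^T=[0,\ 1]^T$, we get $\|a\|^2=[t,\ 1][0,\ 1]^T=1=G_{11}=\|b\|^2$. Moreover $a\neq b$ because $|t|<1$. With this correction, your reflection argument goes through, and the resulting $M=G^{-1/2}NG^{-1/2}$ is exactly $K^{-1}$. Indeed, $K^{-1}[t,\ 1]^T=[1,\ 0]^T$. Also $G=K\begin{bmatrix}0&1\\1&0\end{bmatrix}$, so $K^{-1}G=\begin{bmatrix}0&1\\1&0\end{bmatrix}$ is an involution, and $WK^{-1}W^*$ has nonzero eigenvalues $\pm1$.

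\textbf{A shorter route.} You can avoid $G$ altogether. Set $u=y/\|y\|$ and $v=x/\|x\|$. Then $(u+v)^*(u-v)=-2i\,\Im(u^*v)=0$, precisely because $\Im(x^*y)=0$. Let $P_w$ denote the orthogonal projector onto $\mathrm{span}\{w\}$, and define $H:=\frac{\|y\|}{\|x\|}(P_{u+v}-P_{u-v})$. This $H$ is Hermitian with norm $\|y\|/\|x\|$ and satisfies $Hv=\frac{\|y\|}{\|x\|}u$, i.e.\ $Hx=y$. It coincides with $\frac{\|y\|}{\|x\|}WK^{-1}W^*$.
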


We close this section with the following result~\cite[Theorem 2.3]{MehMS16} that gives minimal norm solutions to the Hermitian positive semidefinite mapping problem with respect to the spectral norm.
\begin{theorem}\label{map:def}
    Let $x,y \in \C^n \setminus \{0\}$. Then there exists a positive semidefinite Hermitian matrix $H$ such that $Hx=y$ if and only if $x^*y > 0$. If the latter condition is satisfied, then
    \[
    \min\left \{\|H\|:H \in \C^{n,n},H^*=H \geq 0,~Hx=y\right \}=\frac{{\|y\|}^2}{x^*y}
    \]
    and the minimum is attained for the rank one matrix
    $\tilde H=\frac{1}{x^*y}yy^*$.
\end{theorem}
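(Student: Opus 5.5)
This is Theorem~\ref{map:def}, quoted from \cite[Theorem 2.3]{MehMS16}. I would prove it directly in three steps: necessity of the condition, a lower bound on the norm, and attainment by the rank-one matrix.

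\emph{Necessity.} Suppose $H=H^*\geq 0$ and $Hx=y$ with $y\neq 0$. Then $x^*y=x^*Hx\geq 0$. If $x^*Hx=0$, then $H^{1/2}x=0$, so $Hx=0$, contradicting $y\neq 0$. Hence $x^*y>0$.

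\emph{Sufficiency and attainment.} Conversely, if $x^*y>0$, set $\tilde H=\frac{1}{x^*y}yy^*$. This matrix is Hermitian and positive semidefinite because $x^*y>0$. It satisfies $\tilde Hx=\frac{y(y^*x)}{x^*y}=y$, since $y^*x=\overline{x^*y}=x^*y$ is real. Its norm is $\|\tilde H\|=\|y\|^2/(x^*y)$, because it is a rank-one matrix $yy^*$ scaled by a positive factor.

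\emph{Lower bound.} This is the one step that needs an idea. For any feasible $H\geq 0$, apply the Cauchy--Schwarz inequality in the semi-inner product $\langle u,v\rangle_H=u^*Hv$. This gives
\[
\|y\|^4=|y^*Hx|^2\leq (y^*Hy)(x^*Hx)=(y^*Hy)(x^*y).
\]
Since $y^*Hy\leq \|H\|\,\|y\|^2$, we obtain $\|y\|^2\leq \|H\|\,x^*y$, that is, $\|H\|\geq \|y\|^2/(x^*y)$.

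Combining the lower bound with attainment shows that the minimum exists and equals $\|y\|^2/(x^*y)$. The only real subtlety is choosing the $H$-weighted Cauchy--Schwarz inequality rather than bounding $\|Hx\|$ directly, which would only give the weaker bound $\|H\|\geq \|y\|/\|x\|$. The rest is routine.
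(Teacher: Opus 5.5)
Your proof is correct and complete. Using $x^*Hx=\|H^{1/2}x\|^2$ correctly rules out $x^*y=0$ when $y\neq 0$. The rank-one matrix $\tilde H$ is feasible because $x^*y$ is real and positive. The $H$-weighted Cauchy--Schwarz inequality gives exactly the matching lower bound.

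There is no in-paper argument to compare against. The paper does not prove Theorem~\ref{map:def}; it imports it from \cite[Theorem~2.3]{MehMS16} and uses it as a black box in Sections~\ref{sec:dist} and~\ref{sec:partial}. Your argument is therefore a self-contained substitute for that citation.

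If you want to shorten the key step, note that $\|H\|\,H-H^2\geq 0$ for every $H\geq 0$. This gives directly $\|y\|^2=x^*H^2x\leq\|H\|\,x^*Hx=\|H\|\,x^*y$. Your Cauchy--Schwarz route reaches the same estimate with one extra inequality.

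You use $y\neq 0$ both in the necessity step and when dividing by $\|y\|^2$, and this hypothesis is genuinely needed. For $y=0$ the matrix $H=0$ is feasible even though $x^*y>0$ fails. So the paper's later applications with $Rx=0$ lie outside the theorem and rest only on the separate convention $0/0:=0$.
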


\noindent In this section we have summarized some preliminary results as well as the characterization of robust asymptotical stability. In the following section we  characterize the distances in Definition~\ref{def:distances}.

\section{Distance to the nearest dHDAE that is not robustly asymtotically stable under full perturbations}\label{sec:dist}

In this section we discuss in detail the various  distances in Definition~\ref{def:distances}. We derive explicit characterizations as well as bounds.

\subsection{Distance to nearest dHDAE with purely imaginary eigenvalues}\label{sec:distim}
In this section, we present analytic results that characterize the smallest structured distance to a dHDAE system of the form \eqref{dhdae1} with purely imaginary eigenvalues.

We use the following notation. Let $\Lambda:=\{\omega:\det(i\omega E-J) =0\}$, and for $\omega\in \R\setminus \Lambda$, define $M=M(\omega):=(i\omega E-J)^{-1}$ and the  block matrices
\begin{equation}\label{mat:tildeH}
    \tilde H_1:=\mat{cc} 0 & -M^*\\ M & 0 \rix, \quad \tilde H_2:=\mat{cc} 0 & -i\omega M \\ i\omega M^* & 0 \rix,
\end{equation}
as well as
\begin{equation}\label{mat:G1G2}
    G_1:=\mat{c} I\\i\omega I \rix M^*R^2M \mat{cc} I & -i\omega I \rix + \mat{cc}I & \\ & I \rix, \quad G_2:=\mat{c} I\\i\omega I \rix M^*M \mat{cc}I & -i\omega I \rix.
\end{equation}
Note that  it follows from Theorem~\ref{thm:Rposdef} that $G_1$ is positive definite, and hence has a unique positive definite square root $G_1^{1/2}$. Thus, we can define the following matrices
\begin{equation}\label{mat:GH1H2}
    G:=G_1^{-1/2}G_2G_1^{-1/2}, \quad H_1:=G_1^{-1/2}\tilde H_1G_1^{-1/2}, \text{ and } H_2:= G_1^{-1/2}\tilde H_2G_1^{-1/2}.
\end{equation}
For our analysis, we will make use of the following Lemma.
\begin{lemma}\label{lem:tfae-map}
    Consider a robustly asymptotically stable dHDAE system of the form~\eqref{dhdae1}, and let $\mathbb S\in \{\mathcal S_d,\mathcal S_i\}$, where $\mathcal S_d,\mathcal S_i$ are defined in~\eqref{set:Sd} and~\eqref{set:Si}, respectively. Furthermore, let $\omega\in \R$ be such that $i\omega$ is not an eigenvalue of the pair $(E,J)$ and define $M:=(i\omega E-J)^{-1}$. Then for $(\Delta_E,\Delta_J,\Delta_R)\in\mathbb S$, the following are equivalent.
    \begin{enumerate}
     \item[\rm (1)] ${\rm det}\left(i\omega (E+\Delta_E)-(J+\Delta_J)+(R+\Delta_R)\right)=0$.
        \item[\rm (2)] $\exists~x(\neq 0)\in \C^{n}$ s.t. $(R+\Delta_R)x=0$, $(i\omega (E+\Delta_E)-(J+\Delta_J))x=0.$
        \item[\rm (3)] $\exists~v_E,v_J\in \C^{n}$~ s.t. $v:=-i\omega v_E+v_J\neq 0$, ~$\Delta_R M v = -RM v, \Delta_EM v=v_E$, and $\Delta_JM v=v_J$.
    \end{enumerate}
\end{lemma}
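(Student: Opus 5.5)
I will establish the cycle of implications (1)$\Rightarrow$(2)$\Rightarrow$(3)$\Rightarrow$(1). Throughout I write $\tilde E=E+\Delta_E$, $\tilde J=J+\Delta_J$, $\tilde R=R+\Delta_R$. For $(\Delta_E,\Delta_J,\Delta_R)\in\mathbb S$ these satisfy $\tilde E=\tilde E^*\ge 0$, $\tilde J^*=-\tilde J$ and $\tilde R\ge 0$; this holds for both $\mathcal S_d$ and $\mathcal S_i$.

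\textbf{(1)$\Rightarrow$(2).} Let $x\neq 0$ satisfy $(i\omega\tilde E-\tilde J+\tilde R)x=0$. Multiply from the left by $x^*$. Since $\omega$ is real and $\tilde E$ is Hermitian, $x^*(i\omega\tilde E)x$ is purely imaginary. Since $\tilde J$ is skew-Hermitian, $x^*\tilde Jx$ is purely imaginary as well. The number $x^*\tilde Rx$ is real and nonnegative. Taking real parts gives $x^*\tilde Rx=0$. Because $\tilde R\ge 0$, this forces $\tilde Rx=0$. Substituting back into the equation yields $(i\omega\tilde E-\tilde J)x=0$. This is the same argument that underlies Theorem~\ref{thm:singindspec}(i)--(ii), and I expect it to be the only step that genuinely uses the semidefinite structure.

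\textbf{(2)$\Rightarrow$(3).} Given such an $x$, define $v_E:=\Delta_Ex$, $v_J:=\Delta_Jx$ and $v:=-i\omega v_E+v_J$. The relation $(i\omega\tilde E-\tilde J)x=0$ rewrites as $(i\omega E-J)x=-i\omega\Delta_Ex+\Delta_Jx=v$. Hence $x=Mv$, which is well defined because $i\omega$ is not an eigenvalue of $(E,J)$. The identities $\Delta_EMv=v_E$ and $\Delta_JMv=v_J$ then hold by construction. Moreover $\tilde Rx=0$ gives $\Delta_RMv=-RMv$. It remains to check $v\neq 0$. If $v=0$, then $x=Mv=0$, which contradicts $x\neq 0$.

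\textbf{(3)$\Rightarrow$(1).} Set $x:=Mv$. Then $x\neq 0$, since $M$ is invertible and $v\neq 0$. The first identity in (3) gives $\tilde Rx=(R+\Delta_R)Mv=0$. Next compute
\[
(i\omega\tilde E-\tilde J)x=(i\omega E-J)Mv+i\omega\Delta_EMv-\Delta_JMv=v+i\omega v_E-v_J=0,
\]
where the last equality uses $v=-i\omega v_E+v_J$. Therefore $(i\omega\tilde E-\tilde J+\tilde R)x=0$ with $x\neq 0$, and the determinant in (1) vanishes. This also establishes (3)$\Rightarrow$(2) directly.

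The only step that is not pure algebra is (1)$\Rightarrow$(2): taking the real part of the quadratic form kills the $E$ and $J$ contributions. This relies on $\omega$ being real, on $\tilde E$ being Hermitian (not merely $\tilde E\ge 0$), and on $\tilde R\ge 0$, so that $x^*\tilde Rx=0$ implies $\tilde Rx=0$. All three properties are guaranteed by membership in $\mathbb S$. Neither robust asymptotic stability nor $\tilde E\ge 0$ beyond Hermitianity is used in the argument.
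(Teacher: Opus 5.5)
Your proof is correct. For (2)$\Rightarrow$(3) and (3)$\Rightarrow$(2) it coincides with the paper's argument: you set $v_E=\Delta_Ex$, $v_J=\Delta_Jx$, $x=Mv$, and conversely you check that $(i\omega\tilde E-\tilde J)Mv=v-v=0$.

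The one real difference is the step (1)$\Rightarrow$(2). The paper only says that (1)$\Leftrightarrow$(2) ``follows immediately from Theorem~\ref{thm:singindspec}''. You instead take the real part of $x^*(i\omega\tilde E-\tilde J+\tilde R)x=0$. Your version is self-contained and slightly more general:
\begin{itemize}
\item it needs only that $\tilde E$ is Hermitian, not $\tilde E\ge 0$ as the cited theorem assumes;
\item it covers $\omega=0$, whereas part (ii) of the cited theorem is stated only for $\omega\neq 0$;
\item it covers the case where the perturbed pencil is singular, whereas that part (ii) is phrased in terms of regular deflating subspaces.
\end{itemize}
So your argument closes a small looseness in the paper's citation rather than merely reproducing it.

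Your remark that robust asymptotic stability is never used is also accurate. The paper's proof does not use it either; only the invertibility of $i\omega E-J$ (so that $M$ exists) and the structure of the perturbed coefficients matter.
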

\proof
The equivalence $(1)\iff (2)$ follows immediately from Theorem~\ref{thm:singindspec}.

$(2)\implies(3)$. If $(2)$ holds, then with $v_J=\Delta_J x$ and $v_E=\Delta_Ex$ we have
\begin{equation*}
 (i\omega E-J )x=-i\omega \Delta_E x+\Delta_J x=-i\omega v_E+v_J=:v
\end{equation*}
Since $M=(i\omega E-J)^{-1}$ is nonsingular, we have $v\neq0$ and $x=M v$. This implies that $\Delta_R M v = -RM v$, $\Delta_EM v=v_E$, and $\Delta_JM v=v_J$.

$(3)\implies(2)$ Suppose that $(3)$ holds and set $x=Mv$. Then clearly $(R+\Delta_R)x=RMv+\Delta_R Mv=RMv-RMv=0$ and
\[
(i\omega (E+\Delta_E)-(J+\Delta_J))x= (i\omega E-J)Mv + (i\omega \Delta_EMv-\Delta_JMv)=v-v=0.
\]
This implies (2).
\eproof

Using Lemma~\ref{lem:tfae-map} we have the following bounds for the distance to the nearest dHDAE system with purely imaginary eigenvalues.
\begin{theorem}\label{thm:dist-imagEJR}
    Consider a robustly asymptotically stable dHDAE system of the form~\eqref{dhdae1}. Then,
    \begin{enumerate}
        \item  for  perturbations from $\mathcal S_d$, the distance to a dH pencil with purely imaginary eigenvalues
        is bounded by
        \begin{align}\label{Sd:dist-imagEJR}
            & (d_{im}^{\mathcal S_d}(E,J,R))^2\geq \min\Bigg\{ \inf_{x\in \Omega}\frac{\|Rx\|^4}{(x^*Rx)^2},\nonumber \\ & \inf_{\omega\in \R\setminus \Lambda}  \left\{\inf_{\substack{u\in\C^{2n},\\Lu\neq 0}}\left(\frac{\|RMLu\|^2}{u^*L^*M^*RMLu}\right)^2+\frac{\|[I_n~0] u\|^2}{\|MLu\|^2} + \left(\frac{\|[0~I_n] u\|^2}{u^*L^*M^*[0~I_n] u}\right)^2: \begin{matrix} u^*[I_n~0]^TMLu \in i\R,\\ ~u^*L^*M^*[0~I_n]u>0 \end{matrix} \right\}  \Bigg\},
        \end{align}
        where $\Omega$ is the set of eigenvectors corresponding to the eigenvalues of the pair $(iE,J)$, and $L:=[I~-i\omega I]$,
        \item for  perturbations from $\mathcal S_i$, the distance to a dH pencil with purely imaginary eigenvalues
        is bounded by
        \begin{equation}\label{Si:dist-imagEJR}
            (d_{im}^{\mathcal S_i}(E,J,R))^2\geq \min\left\{ \inf_{x\in \Omega}\frac{\|Rx\|^2}{\|x\|^2}, \inf_{\omega\in \R\setminus \Lambda} \left(\min_{t_1,t_2 \in \R} \lambda_{\max}(G+t_1H_1+t_2H_2) \right)^{-1} \right\},
        \end{equation}
        where $\Omega$ is the set of eigenvectors corresponding to the eigenvalues of the pair $(iE,J)$, where $G$, $H_1$, and $H_2$ are defined in~\eqref{mat:GH1H2}. 
    \end{enumerate}
\end{theorem}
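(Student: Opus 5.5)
We bound $d_{im}^{\mathbb S}(E,J,R)$ from below by fixing an arbitrary structured perturbation $(\Delta_E,\Delta_J,\Delta_R)\in\mathbb S$ for which the perturbed pencil has an eigenvalue $i\omega$, $\omega\in\R$, and bounding its norm from below. We then take the infimum over $\omega$ and over the eigenvector. We split into two cases, according to whether $i\omega$ is an eigenvalue of the unperturbed pair $(E,J)$, i.e.\ whether $\omega\in\Lambda$. These two cases produce the two terms inside the outer minimum in \eqref{Sd:dist-imagEJR} and \eqref{Si:dist-imagEJR}.

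\emph{Case $\omega\in\Lambda$.} By the equivalence (1)$\iff$(2) of Lemma~\ref{lem:tfae-map}, which rests only on Theorem~\ref{thm:singindspec}, there is $x\neq 0$ with $(R+\Delta_R)x=0$ and $(i\omega(E+\Delta_E)-(J+\Delta_J))x=0$. We simply drop $\Delta_E,\Delta_J$ from the norm and use $\nnrm{(\Delta_E,\Delta_J,\Delta_R)}\ge\|\Delta_R\|$, with $\Delta_R x=-Rx$. The choice $\Delta_E=\Delta_J=0$ makes $x$ an eigenvector of $(iE,J)$, i.e.\ $x\in\Omega$, and this is the regime that the $\Omega$-term is meant to capture. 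For $\mathcal S_i$, Theorem~\ref{map:herm}~a) gives $\|\Delta_R\|\ge\|Rx\|/\|x\|$. For $\mathcal S_d$ we have $-\Delta_R\ge 0$ with $(-\Delta_R)x=Rx$, and $x^*Rx>0$ unless $Rx=0$, which is excluded by robust asymptotic stability. Theorem~\ref{map:def} then gives $\|\Delta_R\|\ge\|Rx\|^2/(x^*Rx)$. Squaring yields the first terms.

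\emph{Case $\omega\notin\Lambda$.} Here we use (1)$\iff$(3) of Lemma~\ref{lem:tfae-map}. We set $u:=[v_E^T~v_J^T]^T\in\C^{2n}$, so that $v=Lu$ up to the sign convention built into $L$, and $x=MLu$. The three mapping conditions become $\Delta_E x=[I~0]u$, $\Delta_J x=[0~I]u$ and $\Delta_R x=-RMLu$. We bound each norm separately using the mapping theorems.

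For $\mathcal S_d$, Theorem~\ref{map:def} applied to $-\Delta_R\ge0$ gives $\|\Delta_R\|^2\ge(\|RMLu\|^2/u^*L^*M^*RMLu)^2$. Theorem~\ref{map:herm}~b) applied to $\Delta_J$ gives $\|\Delta_J\|\ge\|[0~I]u\|/\|MLu\|$, under the solvability condition that $x^*[0~I]u$ be imaginary. Theorem~\ref{map:def} applied to $-\Delta_E\ge 0$ gives the third quotient, under a positivity constraint. These solvability conditions become the constraints in \eqref{Sd:dist-imagEJR}. Since the printed formula pairs the $\Delta_E$ and $\Delta_J$ blocks slightly differently, one must check carefully which block goes with which map and with which sign. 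Summing the squares gives the bound.

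For $\mathcal S_i$ all three maps are Hermitian or skew-Hermitian, so Theorem~\ref{map:herm} gives
\[
\|\Delta_E\|^2+\|\Delta_J\|^2+\|\Delta_R\|^2\ \ge\ \frac{\|RMLu\|^2+\|u\|^2}{\|MLu\|^2}=\frac{u^*G_1u}{u^*G_2u},
\]
subject to two real linear constraints. These are $\Im(x^*\Delta_Ex)=0$, i.e.\ $u^*\tilde H_1u=0$, and $\Re(x^*\Delta_Jx)=0$, i.e.\ $u^*\tilde H_2u=0$; the $R$-condition is automatic. We then substitute $w=G_1^{1/2}u$, which is legitimate because $G_1>0$. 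This turns the problem into minimizing $w^*w/w^*Gw$ subject to $w^*H_1w=w^*H_2w=0$, that is, maximizing $w^*Gw$ over unit $w$ in the joint kernel of these quadratic forms. The maximum is bounded above by $\min_{t_1,t_2}\lambda_{\max}(G+t_1H_1+t_2H_2)$, since the added terms vanish on the constraint set; this is weak duality. Taking reciprocals and the infimum over $\omega$ finishes the proof.

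The main obstacle is this last step: one has to correctly translate the mapping-solvability constraints into the quadratic forms $\tilde H_1,\tilde H_2$, in particular the $i\omega$ factors and the signs. Only an inequality is needed, and weak duality supplies it without any S-lemma argument.
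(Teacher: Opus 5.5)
Your overall route is the paper's: split according to $\omega\in\Lambda$, use Lemma~\ref{lem:tfae-map}, solve the three mapping problems separately, and drop $E+\Delta_E\ge 0$ and $R+\Delta_R\ge 0$. Replacing the paper's appeal to \cite[Theorem~3.2]{BorKMS14} by weak duality is a legitimate simplification, since only an inequality is needed. The case $\omega\in\Lambda$, however, fails as written. You start from an \emph{arbitrary} admissible perturbation, so the vector $x$ from Lemma~\ref{lem:tfae-map} satisfies $(i\omega(E+\Delta_E)-(J+\Delta_J))x=0$, not $(i\omega E-J)x=0$. You cannot then ``choose'' $\Delta_E=\Delta_J=0$, so in general $x\notin\Omega$, and $\|\Delta_R\|\ge\|Rx\|/\|x\|$ (or $\|Rx\|^2/(x^*Rx)$) gives no bound in terms of $\inf_{x\in\Omega}$. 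Likewise, robust asymptotic stability excludes $Rx=0$ only for $x\in\ker(i\omega E-J)$. The perturbed eigenvector may well lie in $\ker R$, and then the entire cost sits in the $\Delta_E,\Delta_J$ you discarded. The paper's proof makes the same unproved assertion that the minimum is attained at $\Delta_E=\Delta_J=0$. A correct argument exists when $\lambda E-J$ is regular, so that $\Lambda$ is finite. Since $x^*(E+\Delta_E)x\in\R$, Theorem~\ref{map:herm}~b) yields a skew-Hermitian $\delta$ with $\delta x=i(\omega'-\omega)(E+\Delta_E)x$ and $\|\delta\|\le|\omega'-\omega|\,\|E+\Delta_E\|$. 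Replacing $\Delta_J$ by $\Delta_J+\delta$ stays in $\mathbb S$, makes $i\omega'$ an eigenvalue, and increases $\nnrm{\cdot}$ by at most $\|\delta\|$. Choosing $\omega'\notin\Lambda$ close to $\omega$ therefore reduces this case to the other one, so the $\Omega$-term is not even needed.

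Second, the bookkeeping you postpone (``which block goes with which map and with which sign'') is exactly where the printed formula is decided, and it does not come out in its favour. The identity $Lu=v=v_J-i\omega v_E$ forces $u=[v_J^T~v_E^T]^T$; with your ordering $Lu=v_E-i\omega v_J$, which is not a sign convention. With the correct ordering the $\Delta_J$ term and the constraint $u^*[I_n~0]^TMLu\in i\R$ do match. But for $\Delta_E\le 0$ one has $x^*v_E=x^*\Delta_Ex\le 0$. For $v_E\neq 0$, Theorem~\ref{map:def} applied to $-\Delta_E$ gives solvability if and only if $x^*v_E<0$, with norm $\|v_E\|^2/(-x^*v_E)$. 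The printed constraint, however, is $u^*L^*M^*[0~I_n]u=x^*v_E>0$. Hence the $u$ belonging to any admissible perturbation is never feasible in the printed infimum. Your argument proves the bound only with this constraint reversed to $u^*L^*M^*[0~I_n]u\le 0$ (with $0/0:=0$); the squared third term is the same for either sign of its denominator. The paper's proof contains the same slip ($v_E^*Mv>0$). In the $\mathcal S_i$ part, your weak-duality step $w^*Gw\le\lambda_{\max}(G+t_1H_1+t_2H_2)$ needs Hermitian $H_1,H_2$. But $\tilde H_1=\mat{cc}0&-M^*\\M&0\rix$ is skew-Hermitian by construction, so $G+t_1H_1+t_2H_2$ is not Hermitian for $t_1\neq 0$, and you must use $iH_1$ instead. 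In fact, since $i\omega E-J$ is skew-Hermitian, $M^*=-M$ and $\tilde H_2=-i\omega\tilde H_1$. So for $\omega\neq 0$ the two constraints coincide and a single multiplier suffices.
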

\begin{proof}
    For $\mathbb S\in \{\mathcal S_d,\mathcal S_i\}$,  the distance to a dHDAE with eigenvalues on the imaginary axis is defined by
    \begin{equation*}
        d_{im}^{\mathbb S}(E,J,R) = \inf\{\nnrm{(\Delta_E,\Delta_J,\Delta_R)}~:~(\Delta_E,\Delta_J,\Delta_R)\in \mathbb S,~\Lambda(E+\Delta_E,J+\Delta_J-R-\Delta_R)\cap i\mathbb R \neq \emptyset \}.
    \end{equation*}
    Using Lemma~\ref{lem:tfae-map}, we obtain
    \begin{align*}
        &d_{im}^{\mathbb S}(E,J,R)\nonumber\\
        &= \inf\{\nnrm{(\Delta_E,\Delta_J,\Delta_R)}:(\Delta_E,\Delta_J,\Delta_R)\in \mathbb S, (R+\Delta_R)x=0 \text{ for some eigenvector $x$ of } (E+\Delta_E,J+\Delta_J) \}\nonumber\\
		&=\inf\{ \nnrm{(\Delta_E,\Delta_J,\Delta_R)}:(\Delta_E,\Delta_J,\Delta_R)\in \mathbb S, (R+\Delta_R)x=0 \text{ for some $x\in \C^n\setminus\{0\}$ satisfying } \nonumber\\ &\hspace{5cm}  (J+\Delta_J)x= i\omega (E+\Delta_E) x, \omega\in \R\}\nonumber\\
		&= \inf_{\omega\in \R}\inf_{x\in\C^{n}\setminus \{0\}} \Big(\inf \{ \nnrm{(\Delta_E,\Delta_J,\Delta_R)}:(\Delta_E,\Delta_J,\Delta_R)\in \mathbb S, (R+\Delta_R)x=0, (J+\Delta_J)x= i\omega (E+\Delta_E) x\} \Big)\nonumber\\
		&=\inf_{\omega \in \R } \rho_\omega^{\mathbb S},
    \end{align*}
    %
    %
    where, for a given scalar $\omega \in \R$, we have
    \begin{align}\label{eq:rho}
        \rho_\omega^{\mathbb S}:=\inf_{x\in \C^{n}\setminus \{0\}}\Big(\inf\{\nnrm{(\Delta_E,\Delta_J,\Delta_R)}^2:(\Delta_E,\Delta_J,\Delta_R)\in \mathbb S,(R+\Delta_R)x=0, (J+\Delta_J)x= i\omega (E+\Delta_E) x\}\Big).
    \end{align}
    By using the set $\Lambda$ of eigenvalues of the pair $(iE,J)$, we obtain
    \begin{equation}\label{eq:dist-im1}
        d_{im}^{\mathbb S}(E,J,R)=\inf_{\omega\in \Lambda \cup \R\setminus\Lambda} \rho_{\omega}^{\mathbb S}.
    \end{equation}
    We first consider the case when $\mathbb S=\mathcal S_d(E,J,R)$. Let $\omega\in \R\setminus\Lambda$, then, using Lemma~\ref{lem:tfae-map} in~\eqref{eq:rho}, we obtain
    \begin{align}
        \rho_\omega^{\mathcal S_d}\Big|_{\R\setminus \Lambda}&= \inf_{x\in \C^{n}\setminus \{0\}}\Big(\inf\{\nnrm{(\Delta_E,\Delta_J,\Delta_R)}^2:(\Delta_E,\Delta_J,\Delta_R)\in S_d,\exists~v_E,v_J\in \C^{n}, \text{ s.t. } v:=-i\omega v_E+v_J\neq 0,\nonumber\\
        & \hspace{5.5cm}\Delta_RM v= -RM v,~\Delta_EM v=v_E,~\Delta_JM v=v_J\} \Big).\label{eq:rho1}
    \end{align}
    To proceed, we solve the associated mapping problems under the imposed structure on the set $\mathcal S_d(E,J,R)$. By~{Theorem~\ref{map:def}}, there exist matrices $\Delta_E\leq 0,$ $\Delta_R\leq 0$ and by Theorem~\ref{map:herm} there exists skew-Hermitian $\Delta_J$ satisfying the constraints in \eqref{eq:rho1} if and only if
    \begin{equation}
        v^*M^*RM v>0,~~ v_E^*M v>0,~ \text{  and }~ v_J^*M v \in i\R.
    \end{equation}
    Moreover, among all admissible mappings, the minimal norms are given by
    \begin{equation}
        \|\Delta_R\|=\frac{\|RMv\|^2}{v^*M^*RM v},~\|\Delta_E\|=\frac{\|v_E\|^2}{v_E^*M v},~\text{ and }~\|\Delta_J\|=\frac{\|v_J\|}{\|M v\|}.
    \end{equation}
    Introducing the vectors $u=[v_J^T,~v_E^T]^T$, $L=[I_n,~-i\omega I_n]$, and substituting the minimal norm mappings in~\eqref{eq:rho1}, we obtain
    \begin{align}
        \rho_\omega^{\mathcal S_d}\Big|_{\R\setminus \Lambda}\geq {\inf_{\substack{u\in\C^{2n},\\Lu\neq 0}}\Bigg\{\left(\frac{\|RMLu\|^2}{u^*L^*M^*RMLu}\right)^2+\frac{\|[I_n~0] u\|^2}{\|MLu\|^2} + \left(\frac{\|[0~I_n] u\|^2}{u^*L^*M^*[0~I_n] u}\right)^2: \begin{matrix} u^*[I_n~0]^TMLu \in i\R,\\ ~u^*L^*M^*[0~I_n]u>0 \end{matrix}\Bigg\}}.
        \label{rhobound}
    \end{align}
    The inequality in \eqref{rhobound} arises because, in solving the individual minimal-norm mapping problems, we did not explicitly enforce the additional conditions $E+\Delta_E\geq 0$ and $R+\Delta_R\geq 0$.

    We now turn to the case $\omega \in \Lambda$. In this situation, the minimal norm $\nnrm{(\Delta_E,\Delta_J,\Delta_R)}$ is attained by choosing $\Delta_E=0$ and $\Delta_J=0$, and by selecting $\Delta_R$ such that $\Delta_R x_\omega = - R x_\omega$, where $x_\omega$ is an eigenvector of the matrix pair $(iE,J)$ corresponding to the eigenvalue $\omega$.
   Note that $x_\omega^*Rx_\omega \neq 0$. Indeed, if $x_\omega^*Rx_\omega = 0$ then $Rx_\omega=0$ as $R \geq 0$. By Lemma~\ref{lem:tfae-map} this implies that $i\omega$ is an eigenvalue of the dHDAE $(E,J-R)$ on the imaginary axis, which is a contradiction as the pair $(E,J-R)$ is assumed to be robustly asymptotically stable.

    Among all Hermitian negative semidefinite perturbations $\Delta_R \leq 0$ satisfying this constraint, the minimal norm is given by
    \[
    \|\Delta_R\| = \frac{\|R x_\omega\|^2}{x_\omega^* R x_\omega},
    \]
    and this minimum is attained for
    \[
    \Delta_R = -\frac{(R x_\omega)(R x_\omega)^*}{x_\omega^* R x_\omega}.
    \]
    Note that, in view of~\cite[Lemma 4.1]{MehMS16}, this choice of $\Delta_R$ also satisfies $R+\Delta_R \geq 0$. Substituting this expression into~\eqref{eq:rho}, we obtain
    \[
    \rho_\omega^{\mathcal S_d}\big|_{\Lambda} = \inf_{\omega \in \Lambda} \frac{{\|R x_\omega\|}^4}{{(x_\omega^* R x_\omega)}^2},
    \]
    where $x_\omega$ denotes eigenvector associated with the eigenvalue $\omega$. Combining the cases $\omega\in\mathbb{R}\setminus\Lambda$ and $\omega\in\Lambda$, we obtain the bound in~\eqref{Sd:dist-imagEJR}.\\

    We now consider the case $\mathbb S = \mathcal S_i(E,J,R)$. For this structured perturbation set, it follows from~\eqref{eq:rho1} that, for $\omega \in \mathbb{R}\setminus\Lambda$,
    \begin{align}
        \rho_\omega^{\mathcal S_i}\Big|_{\R\setminus \Lambda}&= \inf_{x\in \C^{n}\setminus \{0\}}\Big(\inf\{\nnrm{(\Delta_E,\Delta_J,\Delta_R)}^2~:~(\Delta_E,\Delta_J,\Delta_R)\in \mathcal S_i,~\exists~v_E,v_J\in \C^{n}, \text{ s.t. } v:=-i\omega v_E+v_J\neq 0,\nonumber\\
        & \hspace{5.5cm}\Delta_RM v= -RM v,~\Delta_EM v=v_E,~\Delta_JM v=v_J\} \Big).\label{eq:rho3}
    \end{align}
    We proceed analogously to the case $\mathbb S=\mathcal S_d(E,J,R)$ by solving the associated minimal-norm mapping problems
    \begin{equation*}
        \Delta_R M v = - R M v, \quad
    \Delta_E M v = v_E, \quad
    \Delta_J M v = v_J,
    \end{equation*}
    with $(\Delta_E,\Delta_J,\Delta_R)\in \mathcal S_i(E,J,R)$. By Theorem~\ref{map:herm}, there exist Hermitian matrices $\Delta_R,\Delta_E$ and a skew-Hermitian matrix $\Delta_J$ 
    satisfying these constraints if and only if
    \begin{equation*}
        v^* M^* R M v \in \mathbb{R}, \qquad v_E^* M v \in \mathbb{R}, \qquad v_J^* M v \in i\mathbb{R}.
    \end{equation*}
    The first condition holds trivially since $R$ is Hermitian. The remaining two conditions can equivalently be written as
    \begin{equation*}
        u^* \tilde H_1 u = 0, \quad u^* \tilde H_2 u = 0,
    \end{equation*}
    where $u := [\, v_J^T \;\; v_E^T \,]^T$ and the matrices $\tilde H_1$ and
    $\tilde H_2$ are defined in~\eqref{mat:tildeH}.

    Among all feasible mappings, the minimal norms are given by
    \begin{equation*}
        \|\Delta_R\| = \frac{\|R M v\|}{\|M v\|}, \quad
        \|\Delta_E\| = \frac{\|v_E\|}{\|M v\|}, \quad
        \|\Delta_J\| = \frac{\|v_J\|}{\|M v\|}.
    \end{equation*}
    Hence,
    \begin{equation*}
        \nnrm{(\Delta_E,\Delta_J,\Delta_R)}^2 = \frac{\|v_E\|^2 + \|v_J\|^2 + \|R M v\|^2}{\|M v\|^2} = \frac{u^* G_1 u}{u^* G_2 u},
    \end{equation*}
    where the matrices $G_1$ and $G_2$ are defined in~\eqref{mat:G1G2}. Using these formulas in~\eqref{eq:rho3}, we obtain
    \begin{equation}
        \rho_\omega^{\mathcal S_i}\Big|_{\R\setminus \Lambda} \geq \inf_{u\in \C^{2n}\setminus \{0\}} \left\{ \frac{u^* G_1 u}{u^* G_2 u}~:~ u^*G_2u\neq 0,~ u^*\tilde H_1u=0,~u^*\tilde H_2u=0 \right\}.\label{rhobound1}
    \end{equation}
    Note that we have an inequality in \eqref{rhobound1} as we did not impose the conditions $R+\Delta_R\geq0, E+\Delta_E\geq 0$ from the set $\mathcal S_i(E,J,R)$ while solving the mapping problems. Finally, since $G_1$ is positive definite, we may introduce its principal positive definite square root and obtain the equivalent formulation
    \begin{equation}\label{eq:temp1}
        \rho_\omega^{\mathcal S_i}\Big|_{\R\setminus \Lambda} \geq \left( \sup_{y\in \C^{2n}\setminus \{0\}} \left\{ \frac{y^* G y}{y^*y}~:~ y^*H_1y=0,~y^*H_2y=0 \right\}\right)^{-1},
    \end{equation}
    where $y:=G_1^{1/2}u$, and the matrices $G:=G_1^{-1/2}G_2G_1^{-1/2},H_1:=G_1^{-1/2}\tilde H_1G_1^{-1/2},$ and $H_2:=G_1^{-1/2}\tilde H_2G_1^{-1/2}$ are as in~\eqref{mat:GH1H2}. Note that in~\eqref{eq:temp1} we omitted the condition $u^*G_2u\neq 0$ (or equivalently $y^*Gy\neq 0$), as the distance to a pencil with purely imaginary eigenvalues $d_{im}^{\mathcal S_i}(E,J,R)$ is finite, therefore including vectors $u$ such that $u^*G_2u=0$, will not effect the supremum.

Also note that $\alpha H_1+\beta H_2$ is indefinite (i.e., strictly not semidefinite) for every $(\alpha,\beta)\in \R^2 \setminus \{0\}$. Thus in view of~\cite[Theorem 3.2]{BorKMS14}, we have
\[
\rho_\omega^{\mathcal S_i}\Big|_{\R\setminus \Lambda}
\geq \left(\min_{t_1,t_2 \in \R} \lambda_{\max}(G+t_1H_1+t_2H_2) \right)^{-1}.
\]
By following the arguments similar to the case $\rho_\omega^{\mathcal S_d}\Big|_{ \Lambda}$ and using the minimal norm Hermitian mapping from Theorem~\ref{map:herm}, we obtain
  \[
    \rho_\omega^{\mathcal S_i}\big|_{\Lambda} = \inf_{\omega \in \Lambda} \frac{{\|R x_\omega\|}^2}{{\|x_\omega\|}^2},
    \]
  where $x_\omega$ is the eigenvector associated with the eigenvalue $\omega$. Combining the cases $\omega\in\mathbb{R}\setminus\Lambda$ and $\omega\in\Lambda$, we obtain the bound in~\eqref{Si:dist-imagEJR}.
\end{proof}

\begin{remark}{\rm
    Note that Theorem~\ref{thm:dist-imagEJR} provides only a lower bound for the smallest perturbation that moves an eigenvalue to the imaginary axis. This is because, in solving the associated minimal-norm mapping problems in the proof, we did not enforce the additional constraints in the perturbation set $\mathbb S \in \{\mathcal S_d(E,J,R),\mathcal S_i(E,J,R)\}$, namely the conditions $E+\Delta_E \ge 0$ and $R+\Delta_R \ge 0$.

    If the infimum in~\eqref{Sd:dist-imagEJR} and~\eqref{Si:dist-imagEJR} is attained at some $(\hat \omega,\hat x)$, then the corresponding optimal perturbations $(\hat \Delta_E$, $\hat \Delta_J, \hat \Delta_R)$ can be constructed explicitly as described in the proof of Theorem~\ref{thm:dist-imagEJR}. Moreover, if these perturbations additionally satisfy $E+\hat \Delta_E \ge 0$ and $R+\hat \Delta_R \ge 0$, then the lower bound in~\eqref{Sd:dist-imagEJR} and~\eqref{Si:dist-imagEJR} are tight, and equality holds.
    }
\end{remark}
After characterizing the distance to the nearest dHDAE with purely imaginary eigenvalues, in the next subsection we treat the distance to the nearest singular dHDAE.
\subsection{Distance to the nearest singular dHDAE}\label{sec:distsing}

The unstructured distance to singularity $d_{sing}(E,J,R):=d_{sing}^{\mathcal S_d}{\mathbb(E,J,R)}$, when $\mathbb S={(\C^{n,n})}^3$ in~\eqref{def:dist-sing} and the structured distance $d_{sing}^{\mathcal S_d}(E,J,R)$ were studied in~\cite{PraS22}. For the sake of completeness, we state the following result from~\cite{PraS22} in our notation.

\begin{theorem}\label{thm:dist-singEJR-d}
        Consider an asymptotically stable dHDAE system of the form~\eqref{dhdae1}. Then
    \begin{enumerate}
        \item the unstructured distance to singularity $d_{sing}(E,J,R)$ is given by
    \begin{equation}\label{Si:dist-sing-EJR1}
        d_{sing}(E,J,R)=\sqrt{\lambda_{\min}(E^2-J^2+R^2)}.
    \end{equation}
        \item the distance to singularity $d_{sing}^{\mathcal S_d}(E,J,R)$ with respect to perturbations from the set $\mathcal S_d(E,J,R)$ is given by
\begin{equation}\label{Sd:dist-sing-EJR}
(d_{sing}^{\mathcal S_d}(E,J,R))^2=\min\left\{d_{sing}^{\mathcal S_d}(J,R)^2,d_{sing}^{\mathcal S_d}(E,J)^2,
\min_{\alpha \in \mathcal M_3} \frac{\alpha^*J^*J\alpha}{\alpha^*\alpha}+
 \left(\frac{\alpha^*R^2\alpha}{\alpha^*R\alpha}\right)^2+ \left(\frac{\alpha^*E^2\alpha}{\alpha^*E\alpha}\right)^2
\right\},
   \end{equation}
where  $\mathcal M_3=\text{\rm ker}(R)^{c}\cap \text{\rm ker}(E)^{c}$,
and $d_{sing}^{\mathcal S_d}(J,R)$ and
$d_{sing}^{\mathcal S_d}(E,J)$ are given by~\cite[Table A.1]{PraS22}.
    \end{enumerate}
\end{theorem}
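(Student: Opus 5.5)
The plan is to prove both parts from one kernel characterization of singularity for square pencils. A pencil $\lambda \tilde E-(\tilde J-\tilde R)$ with $\tilde E=\tilde E^*\ge 0$, $\tilde R=\tilde R^*\ge 0$, $\tilde J=-\tilde J^*$ is singular if and only if there is a nonzero $x$ with $\tilde Ex=0$, $\tilde Jx=0$ and $\tilde Rx=0$. This is Corollary~\ref{cor} a): singularity corresponds to $n_5>0$ in the staircase form, i.e., a common kernel vector.

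For the unstructured part 1, where there is no sign structure, I would instead use the stacked matrix. The first step is to show that $\det(\lambda \tilde E-\tilde A)\equiv 0$ can be reached from the given pencil by perturbations no smaller than $\sigma_{\min}\left(\left[\begin{smallmatrix}E\\ J-R\end{smallmatrix}\right]\right)$ in a suitable sense. I would reduce this to the perturbed triple having a common kernel vector.
\begin{itemize}
\item For the lower bound I would take it from the result of \cite{PraS22} as quoted. Directly, if $(E+\Delta_E)x=(J+\Delta_J)x=(R+\Delta_R)x=0$ with $\|x\|=1$, then $\|\Delta_E x\|^2+\|\Delta_J x\|^2+\|\Delta_R x\|^2=x^*(E^2+J^*J+R^2)x=x^*(E^2-J^2+R^2)x\ge \lambda_{\min}(E^2-J^2+R^2)$. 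The left side is bounded by $\nnrm{(\Delta_E,\Delta_J,\Delta_R)}^2$.
\item For the upper bound I would take $x$ a unit eigenvector for $\lambda_{\min}$ and choose the rank-one perturbations $\Delta_E=-Exx^*$, $\Delta_J=-Jxx^*$, $\Delta_R=-Rxx^*$. These achieve equality.
\end{itemize}
One caveat: in the unstructured setting, singularity of the perturbed pencil need not require a common kernel vector of the separate triple. Here I would rely on \cite{PraS22}, which treats the triple $(E,J,R)$ as independent coefficients.

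For part 2, with $\mathcal S_d$, the plan is to write $d^2=\inf_x \inf\{\ldots : (E+\Delta_E)x=0,\ (J+\Delta_J)x=0,\ (R+\Delta_R)x=0\}$. For fixed $x$ I would solve three decoupled mapping problems and split into cases according to whether $Ex$ and $Rx$ vanish.
\begin{itemize}
\item If $Ex\ne 0$ and $Rx\ne 0$, Theorem~\ref{map:def} gives minimal negative semidefinite $\Delta_E,\Delta_R$ with norms $\|Ex\|^2/x^*Ex$ and $\|Rx\|^2/x^*Rx$. Theorem~\ref{map:herm} b) gives $\|\Delta_J\|=\|Jx\|/\|x\|$, which is feasible since $\Re(x^*Jx)=0$. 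By \cite[Lemma 4.1]{MehMS16}, the rank-one choices also keep $E+\Delta_E\ge 0$ and $R+\Delta_R\ge0$, so the bound is attained. This yields the third term over $\mathcal M_3$.
\item If $Ex=0$, then $\Delta_E=0$ and the problem reduces to the $(J,R)$ distance. If $Rx=0$, it reduces to the $(E,J)$ distance. These two reductions give the first two terms.
\end{itemize}

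I expect the main obstacle to be the lower semicontinuity and attainment of the minimum over $\mathcal M_3$. The infimum may escape to the boundary where $x^*Ex\to0$, and the statement writes $\min$ there. I would handle this with the convention \eqref{eq:defrq} and by noting that limit points land in the other two cases, which are already included in the outer minimum.
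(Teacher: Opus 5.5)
The paper gives no proof of this theorem; it quotes it from \cite{PraS22}. Your part 2 is a correct, self-contained derivation. For $\mathcal S_d$-perturbations the perturbed pencil is again dH, so singularity is equivalent to a common kernel vector. The three mapping problems then decouple, and the rank-one minimizers $-\frac{Exx^*E}{x^*Ex}$ and $-\frac{Rxx^*R}{x^*Rx}$ stay feasible. The case split $x\in\ker E$, $x\in\ker R$, $x\in\mathcal M_3$ gives the three terms. Lower semicontinuity justifies reading the inner $\min$ as an $\inf$: off the kernel the $E$- and $R$-quotients are bounded below by the smallest positive eigenvalue, and on the kernel they are $0$.

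Part 1 has a genuine gap at exactly the caveat you flag, and deferring to \cite{PraS22} does not close it. With $d_{sing}$ defined as in the paper ($\mathbb S=(\C^{n,n})^3$ in \eqref{def:dist-sing}), the claimed lower bound is false. Unrestricted perturbations only affect $J-R$, so nothing forces $(J+\Delta_J)x=0$ and $(R+\Delta_R)x=0$ separately. The correction of $(J-R)x$ can be split between $\Delta_J$ and $\Delta_R$ at half the squared cost.

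Concretely, take $n=1$, $E=1$, $J=0$, $R=1$, i.e. the pencil $\lambda+1$, which is robustly asymptotically stable. The choice $\Delta_E=-1$, $\Delta_J=\tfrac12$, $\Delta_R=-\tfrac12$ yields the zero pencil with $\nnrm{(\Delta_E,\Delta_J,\Delta_R)}=\sqrt{3/2}$. But $\sqrt{\lambda_{\min}(E^2-J^2+R^2)}=\sqrt2$.

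Your proposed first step is also unsound. For general pencils, singularity need not produce a common right null vector (think of a zero row, or right minimal indices $\ge 1$). Consequently $\sigma_{\min}$ of the stacked matrix is an upper bound for the distance to singularity, not a lower one \cite{ByeHM98}. Moreover, $\sigma_{\min}\big(\left[\begin{smallmatrix}E\\ J-R\end{smallmatrix}\right]\big)^2=\lambda_{\min}(E^2-J^2+R^2+JR-RJ)$ is not the quantity in the statement.

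What your two bullets do prove is more limited. They show that $\sqrt{\lambda_{\min}(E^2-J^2+R^2)}$ is the distance, over arbitrary $(\Delta_E,\Delta_J,\Delta_R)$, to the set of triples with $\ker(E+\Delta_E)\cap\ker(J+\Delta_J)\cap\ker(R+\Delta_R)\neq\{0\}$. Consider any perturbation class that preserves $E+\Delta_E\ge 0$, $R+\Delta_R\ge 0$ and skew-Hermitian $J+\Delta_J$. For such a class this set is exactly where singularity occurs. So your bound is a valid lower bound for $d_{sing}^{\mathcal S_i}$ and $d_{sing}^{\mathcal S_d}$, which is all that Theorem~\ref{thm:dist_singEJR_i} actually uses. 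Part 1 has to be restated in this common-kernel form; as a statement about unrestricted complex perturbations it cannot be proved.
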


\begin{theorem}\label{thm:dist_singEJR_i}
    Consider an asymptotically stable dHDAE system of the form~\eqref{dhdae1}. Then
the distance to singularity $d_{sing}^{\mathcal S_i}(E,J,R)$ with respect to perturbations from the set $\mathcal S_i(E,J,R)$ is bounded by
    \begin{equation}\label{Si:dist-sing-EJR}
    d_{sing}^{\mathcal S_d}(E,J,R) \geq    d_{sing}^{\mathcal S_i}(E,J,R)\geq \sqrt{\lambda_{\min}(E^2-J^2+R^2)}.
    \end{equation}
    Furthermore, if $R>0$ and $E>0$, then
    \begin{equation}\label{Si:dist-sing-EJR2}
   d_{sing}^{\mathcal S_i}(E,J,R)= \sqrt{\lambda_{\min}(E^2-J^2+R^2)}= d_{sing}(E,J,R).
    \end{equation}
\end{theorem}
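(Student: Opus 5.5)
The proof splits into three parts: the two inequalities in \eqref{Si:dist-sing-EJR}, and then equality in the definite case.

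\emph{Upper inequality.} Every nonpositive semidefinite perturbation is Hermitian, so $\mathcal S_d(E,J,R)\subseteq \mathcal S_i(E,J,R)$. The infimum in \eqref{def:dist-sing} over the larger set is therefore no larger, which gives $d_{sing}^{\mathcal S_d}\geq d_{sing}^{\mathcal S_i}$.

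\emph{Lower inequality.} Note first that $\mathcal S_i(E,J,R)$ is contained in the set of all unstructured perturbations. Hence $d_{sing}^{\mathcal S_i}\geq d_{sing}(E,J,R)$, and the latter equals $\sqrt{\lambda_{\min}(E^2-J^2+R^2)}$ by Theorem~\ref{thm:dist-singEJR-d}. To keep the argument self-contained, I would also derive this directly. By Corollary~\ref{cor} a), a pencil of this type is singular iff there is $x\neq 0$ with $(E+\Delta_E)x=0$, $(J+\Delta_J)x=0$ and $(R+\Delta_R)x=0$. We may normalize $\|x\|=1$. Then
\[
\|\Delta_E\|^2+\|\Delta_J\|^2+\|\Delta_R\|^2\geq \|Ex\|^2+\|Jx\|^2+\|Rx\|^2=x^*(E^2-J^2+R^2)x\geq\lambda_{\min}(E^2-J^2+R^2).
\]
Here I use $J^*J=-J^2$.

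\emph{Equality for $E>0$ and $R>0$.} Let $x$ be a unit eigenvector of $E^2-J^2+R^2$ for its smallest eigenvalue. Set $\Delta_E=-Exx^*$, $\Delta_R=-Rxx^*$ and $\Delta_J=-Jxx^*$. These give $(E+\Delta_E)x=0$ etc. and attain the bound exactly, but they are not Hermitian or skew-Hermitian, so the structure has to be repaired. I would replace each of them by the minimal-norm structured mapping from Theorem~\ref{map:herm}:
\begin{itemize}
\item $\Delta_E=-\hat H_{(x,Ex)}$, which is admissible since $x^*Ex$ is real;
\item $\Delta_R=-\hat H_{(x,Rx)}$, again admissible since $x^*Rx$ is real;
\item $\Delta_J=-\hat S$ for the skew-Hermitian mapping of $x$ to $Jx$, admissible since $x^*Jx\in i\R$.
\end{itemize}
Theorem~\ref{map:herm} gives the norms $\|Ex\|$, $\|Jx\|$ and $\|Rx\|$, so the lower bound is attained.

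\emph{Main obstacle.} It remains to verify $E+\Delta_E\geq 0$ and $R+\Delta_R\geq0$, and this is the step I expect to be hardest. My first attempt would be to use instead the rank-one semidefinite choices $\Delta_E=-\frac{Exx^*E}{x^*Ex}$ and $\Delta_R=-\frac{Rxx^*R}{x^*Rx}$. These are well defined because $E,R>0$, and by \cite[Lemma 4.1]{MehMS16} they keep $E+\Delta_E\geq0$ and $R+\Delta_R\geq0$. However, their norms $\|Ex\|^2/x^*Ex$ exceed $\|Ex\|$ unless $x$ is an eigenvector of $E$.

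So I would argue with the Hermitian minimal mapping directly. The matrix $\hat H_{(x,Ex)}$ acts on $\mathrm{span}\{x,Ex\}$ only, and there it has the form $\|Ex\|$ times a $2\times2$ matrix with eigenvalues of modulus at most one. One then checks, via a compression argument on that two-dimensional space, that $E-\hat H\geq0$ follows from $E>0$ and Cauchy--Schwarz $|x^*Ex|\leq\|Ex\|$. The argument for $R$ is identical. If this check fails in general, the fallback is a limiting argument: perturb $x$ within the near-minimal eigenvectors so that the infimum is still approached, which suffices because the distance is an infimum.
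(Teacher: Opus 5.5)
Your route is the paper's. Both inequalities come from $\mathcal S_d\subseteq\mathcal S_i\subseteq(\C^{n,n})^3$; your direct common-kernel estimate $\|\Delta_E\|\ge\|Ex\|$, $\|\Delta_J\|\ge\|Jx\|$, $\|\Delta_R\|\ge\|Rx\|$ is a correct self-contained version of the lower bound. Equality comes from the minimal-norm Hermitian and skew-Hermitian mappings of Theorem~\ref{map:herm}, taken at a unit eigenvector $x$ of $E^2-J^2+R^2$ for its smallest eigenvalue.

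The step you flag as the main obstacle is exactly the one the paper does not prove itself; it takes it from \cite[Lemma~4.4]{MehMS16}. Your sketch of this step does not work as stated, for three reasons:
\begin{itemize}
\item Positivity of the compression of $E-\hat H$ to $V=\operatorname{span}\{x,Ex\}$ says nothing about the cross terms between $V$ and $V^\perp$.
\item Cauchy--Schwarz is not the relevant ingredient.
\item The fallback of moving $x$ among near-minimizers cannot repair a genuine failure of semidefiniteness.
\end{itemize}
So you should either cite that lemma or use the following argument.

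\emph{Case $Ex\in\operatorname{span}\{x\}$.} Then $\hat H_{(x,Ex)}=(x^*Ex)\,xx^*$, and $E-\hat H_{(x,Ex)}\ge 0$ because $x$ is an eigenvector of $E$.

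\emph{Case $Ex\notin\operatorname{span}\{x\}$.} Set $\hat H:=\hat H_{(x,Ex)}$. This matrix vanishes on $V^\perp$. On $V$ it equals $\|Ex\|$ times the Hermitian involution exchanging $x$ and $Ex/\|Ex\|$; this is where $x^*Ex\in\R$ is used. Since that involution has eigenvalues $\pm1$, we get $\operatorname{tr}\hat H=0$.

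Take a unit vector $x^\perp\in V$ orthogonal to $x$. Computing the trace in the basis $\{x,x^\perp\}$ of $V$ gives
\[
(x^\perp)^*\hat Hx^\perp=-x^*\hat Hx=-x^*Ex .
\]
The matrix $E-\hat H$ is Hermitian and annihilates $x$, so it suffices to test it on vectors $y\perp x$. Write $y=bx^\perp+w$ with $w\in V^\perp$. Using $\hat Hw=0$, we obtain
\[
y^*(E-\hat H)y=y^*Ey+|b|^2\,x^*Ex\;\ge\;0 .
\]

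The same computation gives $R-\hat H_{(x,Rx)}\ge 0$, which completes your proof of equality. This argument uses only $E\ge0$ and $R\ge0$.
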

\begin{proof} The inequalities in~\eqref{Si:dist-sing-EJR} are immediate by Definition~\ref{def:distances}, since
\[
S_d(E,J,R) \subseteq S_i(E,J,R) \subseteq (\C^{n,n})^3.
\]
In view of~\eqref{def:dist-sing} and Corollary~\ref{cor}, we have
	\begin{align}
		&d_{sing}^{\mathcal S_i}(E,J,R)\nonumber\\
		&=\inf \left\{ \nnrm{(\Delta_E,\Delta_J,\Delta_R)}:(\Delta_E,\Delta_J,\Delta_R)\in \mathcal S_i,~ \ker(E+\Delta_E) \cap \ker(J + \Delta_J) \cap \ker(R+\Delta_R) \neq \{0\} \right\}\nonumber\\
		&=\inf \{ \nnrm{(\Delta_E,\Delta_J,\Delta_R)}:(\Delta_E,\Delta_J,\Delta_R)\in \mathcal S_i,~x\in \C^{n}\setminus\{0\},~ (E+\Delta_E)x=0, (J + \Delta_J)x=0,\nonumber \\
        &\hspace{10cm}(R+\Delta_R)x=0 \}.\label{Sd:dist-sing-EJR1}
	\end{align}
For a given $x\in \C^{n}\setminus \{0\}$, to obtain the minimal norm perturbation $(\Delta_E,\Delta_J,\Delta_R) \in \mathcal S_i(E,J,R)$ we have to solve the mapping problem
	\begin{equation}\label{mapping:constraints}
		(E+\Delta_E)x=0,~ \Delta_Rx=-Rx,~\text{and }~ \Delta_Jx=-Jx.
	\end{equation}
    By Theorem~\ref{map:herm}, there exist Hermitian matrices $\Delta_R,\Delta_E$ and a skew-Hermitian matrix $\Delta_J$ satisfying these constraints if and only if
    \begin{equation*}
        x^*R x \in \mathbb{R}, \qquad x^* E x \in \mathbb{R}, \qquad x^* J x \in i\mathbb{R},
    \end{equation*}
    which holds trivially due to the structure of the matrices $E,J$ and $R$. Among all feasible mappings, the minimal norms are given by
    \begin{equation*}
        \|\Delta_R\| = \frac{\|R x\|}{\|x\|}, \quad
        \|\Delta_E\| = \frac{\|Ex\|}{\|x\|}, \quad
        \|\Delta_J\| = \frac{\|Jx\|}{\|x\|}.
    \end{equation*}
    Also when $E>0$ and $R>0$, from~\cite[Lemma 4.4]{MehMS16}, the minimal norm mappings $\Delta_E$ and $\Delta_R$ satify that $R+\Delta_R \geq 0$ and $E+\Delta_E \geq 0$.
    Using the above formulas in~\eqref{Sd:dist-sing-EJR1}, we obtain
    \begin{equation*}
        (d_{sing}^{\mathcal S_i}(E,J,R))^2=\inf_{x\in \C^{n}\setminus\{0\}} \frac{x^*(E^*E+J^*J+R^*R)x}{x^*x} = \lambda_{\min}(E^2-J^2+R^2).
    \end{equation*}
    This completes the proof.
\end{proof}

\subsection{Distance to the nearest dHDAE with higher index}\label{sec:disthi}
Having established an explicit characterization of the distance to singularity for dHDAE systems,
we now turn our attention to the nearest system of higher index
under the same classes of structure-preserving perturbations, $ {\mathcal S_d}(E,J,R)$ and ${\mathcal S_i}(E,J,R).$

\begin{theorem}\label{thm:dist-hiEJR}
    Consider a robustly asymptotically stable dHDAE system of the form~\eqref{dhdae1}. Then
    \begin{enumerate}
        \item the distance to higher index $d_{hi}^{\mathcal S_d}(E,J,R)$ with respect to perturbations from the set $\mathcal S_{d}(E,J,R)$ is bounded via
        \begin{equation}
            (d_{hi}^{\mathcal S_d}(E,J,R))^2 \leq \min_{1 \leq k \leq n} \inf_{x\neq 0} \left\{ \lambda_{n-k+1}(E)^2 + \frac{\|(N_k^*)^{\dagger} N_k^* J N_k x\|^2}{\|N_k x\|^2} + \frac{\|(N_k^*)^{\dagger} N_k^* R N_k x\|^4} {\big(x^* N_k^* R N_k x\big)^2} \right\},
        \end{equation}
        where $N_k$ denotes a  matrix with  columns that span a basis of $\ker(E+\hat \Delta_E)$, $\hat \Delta_E$ is the optimal perturbation such that $\rank (E+\hat \Delta_E)=k$,
         and $\lambda_{n-k+1}(E)$ is the $(n-k+1)$th smallest eigenvalue of $E$.
        \item if $R>0$, the distance to higher index $d_{hi}^{\mathcal S_i}(E,J,R)$ with respect to perturbations from the set $\mathcal S_{i}(E,J,R)$ is bounded via
        \begin{equation}
            (d_{hi}^{\mathcal S_i}(E,J,R))^2 \leq \min_{1\leq k \leq n}\inf_{x\neq 0} \left\{ \lambda_{n-k+1}(E)^2 + \frac{\|(N_k^*)^{\dagger} N_k^* J N_k x\|^2}{\|N_k x\|^2} + \frac{\|(N_k^*)^{\dagger} N_k^* R N_k x\|^2} {\|N_k x\|^2} \right\},
        \end{equation}
        where $N_k$ denotes a  matrix with  columns that span a basis of $\ker(E+\hat \Delta_E)$, $\hat \Delta_E$ is the optimal perturbation such that $\rank (E+\hat \Delta_E)=k$, and $\lambda_{n-k+1}(E)$ is the $(n-k+1)$th smallest eigenvalue of $E$.
    \end{enumerate}
\end{theorem}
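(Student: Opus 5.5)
This is an upper bound, so the plan is to exhibit, for each admissible $k$ and each $x\neq 0$, an explicit perturbation in the relevant set that produces a pencil of index greater than one. Its squared norm should equal the expression inside the infimum. Taking the infimum over $x$ and the minimum over $k$ then gives the claim.

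By Theorem~\ref{thm:stabledh}, and more concretely by the staircase form of Lemma~\ref{lem:tSF} and Corollary~\ref{cor}, the index exceeds one (or the pencil becomes singular) as soon as the projection $N^*(\tilde J-\tilde R)N$ of the perturbed dissipation/structure part onto $N=\ker\tilde E$ becomes singular. Indeed, a vector $x$ with $N^*(\tilde J-\tilde R)Nx=0$ produces a nontrivial $n_1=n_4$ block, i.e.\ a chain at infinity of length two. A singular perturbed pencil is harmless here: its distance is at least $d_{sing}$, and the definition as an infimum over bad pencils still yields an upper bound.

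The construction proceeds in three steps.

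\textbf{Step 1 (perturbing $E$).} Take $\hat\Delta_E=-\sum_{j}\lambda_j(E)\,q_jq_j^*$ over the smallest eigenpairs of $E$, so that the kernel of $E+\hat\Delta_E$ is spanned by $N_k$. Then $\hat\Delta_E\le 0$, $E+\hat\Delta_E\ge 0$, and $\|\hat\Delta_E\|$ is the largest eigenvalue removed, namely $\lambda_{n-k+1}(E)$ in the paper's indexing. This covers both $\mathcal S_d$ and $\mathcal S_i$.

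\textbf{Step 2 (perturbing $J$).} Fix $x$ and set $z=N_kx$. We need $\Delta_J$ skew-Hermitian with $N_k^*(J+\Delta_J)z=0$. It suffices to choose $\Delta_J z=-w_J$ with $w_J:=(N_k^*)^\dagger N_k^*Jz$. Since $N_k$ has orthonormal columns, $(N_k^*)^\dagger=N_k$, so $w_J=N_kN_k^*Jz$ and $N_k^*w_J=N_k^*Jz$. The compatibility condition $\Re(z^*w_J)=\Re(z^*Jz)=0$ holds because $J$ is skew-Hermitian. Theorem~\ref{map:herm}(b) then gives a skew-Hermitian $\Delta_J$ with $\|\Delta_J\|=\|w_J\|/\|N_kx\|$.

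\textbf{Step 3 (perturbing $R$).} Analogously, set $w_R:=N_kN_k^*RN_kx$ and require $\Delta_Rz=-w_R$. Then $z^*w_R=x^*N_k^*RN_kx\ge 0$.
\begin{itemize}
\item For $\mathcal S_d$, use Theorem~\ref{map:def} on $-\Delta_R\ge0$. This gives $\|\Delta_R\|=\|w_R\|^2/(x^*N_k^*RN_kx)$, whose square is the third term of part 1. When $x^*N_k^*RN_kx=0$, we have $RN_kx=0$, so the term is $0$ by convention~\eqref{eq:defrq} and no $\Delta_R$ is needed.
\item For $\mathcal S_i$, use Theorem~\ref{map:herm}(a). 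This gives $\|\Delta_R\|=\|w_R\|/\|N_kx\|$.
\end{itemize}
With these choices, $N_k^*(J+\Delta_J-R-\Delta_R)N_kx=N_k^*(Jz-w_J)-N_k^*(Rz-w_R)=0$, so the perturbed pencil has index greater than one. Summing the squared norms gives exactly the bracketed expressions.

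The main obstacle is verifying the semidefiniteness side conditions $R+\Delta_R\ge0$ that membership in $\mathcal S_d$ or $\mathcal S_i$ demands.
\begin{itemize}
\item For $\mathcal S_d$, I would follow the proof of Theorem~\ref{thm:dist-imagEJR}: the rank-one choice $\Delta_R=-w_Rw_R^*/(z^*w_R)$ keeps $R+\Delta_R\ge0$ by \cite[Lemma 4.1]{MehMS16}. To make this applicable, it may be cleaner to map $z\mapsto -Rz$ directly; this is a larger but still valid choice, and its norm is bounded by the stated one after restricting to the range of $N_k$. This point is where I would check details most carefully.
\item For $\mathcal S_i$, the hypothesis $R>0$ is exactly what lets me invoke \cite[Lemma 4.4]{MehMS16}, as in Theorem~\ref{thm:dist_singEJR_i}, to guarantee $R+\Delta_R\ge 0$ for the minimal Hermitian mapping.
\end{itemize}
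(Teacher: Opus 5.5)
\textbf{There is a genuine gap, and it is the step you flagged.} Your construction is the paper's own two-stage argument. You truncate the small eigenvalues of $E$. You then map $z=N_kx$ by a minimal skew-Hermitian $\Delta_J$ onto the projected $J$-residual and by a minimal (semi)definite $\Delta_R$ onto the projected $R$-residual, and invoke \cite{MehMS16}.

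Let $P=(N_k^*)^\dagger N_k^*$ be the orthogonal projector onto $\operatorname{range}(N_k)$, so $Pz=z$. Any Hermitian $\Delta_R$ with $\Delta_Rz=-PRz$ satisfies $z^*(R+\Delta_R)z=z^*Rz-z^*PRz=0$. A positive semidefinite matrix whose quadratic form vanishes at $z$ must annihilate $z$, but here $(R+\Delta_R)z=(I-P)Rz$.

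So whenever $RN_kx\notin\operatorname{range}(N_k)$, your $\Delta_R$ lies in neither $\mathcal S_d$ nor $\mathcal S_i$, whether or not $R>0$. Lemmas~4.1 and~4.4 of \cite{MehMS16} concern the map $z\mapsto -Rz$ and do not apply.

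No admissible perturbation can avoid this. The real part of $x^*N_k^*(\tilde J-\tilde R)N_kx=0$ gives $z^*\tilde Rz=0$, hence $\tilde Rz=0$, so $\Delta_Rz=-Rz$ is forced. Only the $J$-residual may legitimately be projected. Your fallback $\Delta_R=-Rzz^*R/(z^*Rz)$ is admissible, but $\|Rz\|^2/(z^*Rz)\ge\|PRz\|^2/(z^*Rz)$. Its norm is therefore bounded \emph{below}, not above, by the stated term.

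\textbf{The gap cannot be repaired, because the inequality as stated is false.} The paper's proof makes the same unjustified appeal to \cite{MehMS16}.
\begin{itemize}
\item \emph{Part~1.} Take $E=\diag(1,0)$, $J=0$, $R=\left[\begin{smallmatrix}2&1\\1&1\end{smallmatrix}\right]$. The pencil has the single finite eigenvalue $-1$ and index one, so it is robustly asymptotically stable. For $k=1$ we get $\hat\Delta_E=0$, $N_1=e_2$, and the bracket in part~1 equals $1$. Your choice $\Delta_R=-e_2e_2^*$ gives $R+\Delta_R=\left[\begin{smallmatrix}2&1\\1&0\end{smallmatrix}\right]\not\ge0$.

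For $(\Delta_E,\Delta_J,\Delta_R)\in\mathcal S_d$, the conditions $\Delta_E\le0$ and $E+\Delta_E\ge0$ force $\Delta_Ee_2=0$. The case $\tilde E=0$ gives index at most one. Otherwise index $>1$ requires $\tilde R_{22}=0$, hence $\tilde Re_2=0$, i.e.\ $\Delta_Re_2=-(1,1)^T$. Theorem~\ref{map:def} then gives $\|\Delta_R\|\ge2$, so $(d_{hi}^{\mathcal S_d})^2\ge4>1$.
\item \emph{Part~2.} Take $E=\diag(10,0)$ with the same $J,R$; the system is still robustly asymptotically stable. Under $\mathcal S_i$, index $>1$ forces $\rank\tilde E=1$ and $\tilde Ev=\tilde Rv=0$ for some unit $v$. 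Hence $(d_{hi}^{\mathcal S_i})^2\ge\lambda_{\min}(E^2+R^2)\approx1.91$, while part~2's bracket at $k=1$ is again $1$.
\end{itemize}
What your construction does prove is the bound with $(N_k^*)^\dagger N_k^*RN_kx$ replaced by $RN_kx$ in the last term of both parts. For $\mathcal S_i$, $R-\hat H_{(z,Rz)}\ge0$ does hold whenever $z^*Rz>0$.

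\textbf{Singular perturbed pencils.} Your treatment of this case is a non sequitur. A perturbation that produces a singular pencil is not feasible in the definition of $d_{hi}$, and its norm exceeding $d_{sing}$ says nothing about $d_{hi}$. The case genuinely occurs: in the first example the corrected perturbation gives $\tilde J=0$, $\tilde R=\diag(1,0)$, and a singular pencil. You need an additional arbitrarily small structured term, e.g.\ $\epsilon(e_2e_1^*-e_1e_2^*)$ added to $J$, to restore regularity while keeping the length-two chain at infinity.
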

\begin{proof}
    Using Definition~\eqref{def:distances}, the distance to higher index under structured perturbations from the set $\mathbb S\in \{\mathcal S_d,\mathcal S_i\}$ is given by
    \[
    d_{hi}^{\mathbb S}(E,J,R) = \inf\Big\{\nnrm{(\Delta_E,\Delta_J,\Delta_R)} ~:~ (\Delta_E,\Delta_J,\Delta_R)\in \mathbb S,~ \text{ index of } (E+\Delta_E,J+\Delta_J-R-\Delta_R) > 1 \Big\}.
    \]
    In view of Theorem~\ref{thm:stabledh}, this can be equivalently written as
    \begin{align*}
        &d_{hi}^{\mathbb S}(E,J,R) \\ &= \inf\Big\{ \nnrm{(\Delta_E,\Delta_J,\Delta_R)} ~:~ (\Delta_E,\Delta_J,\Delta_R)\in \mathbb S,~ \text{rank}\big(N(E+\Delta_E)^* (J+\Delta_J-R-\Delta_R) N(E+\Delta_E) \big) < n \Big\},
    \end{align*}
    where $N(E+\Delta_E)$ denotes a basis matrix for the null space of $E+\Delta_E$.

    Observe that the constraint in the above formulation involves second-order perturbation terms through products of $\Delta_E$ with $\Delta_J$ and $\Delta_R$. To address this difficulty, we adopt a two-stage approach. First, we determine an optimal perturbation $\Delta_E$ such that $\dim \ker(E+\Delta_E) = k$. Then, for this fixed $\Delta_E$, we determine the minimal structured perturbations $(\Delta_J,\Delta_R)$ that satisfy the rank condition. Let $E = U \Sigma U^*$ be the spectral decomposition of $E$, with $\Sigma = \diag \{\lambda_1,\lambda_2,\ldots,\lambda_n\}$, where $\lambda_1 \ge \cdots \ge \lambda_n$. The optimal perturbation $\Delta_E$ achieving ${\rank}(E+\Delta_E)=n-k$ is given by
    \[
    \hat \Delta_E = - U \diag\{0,0,\ldots,0,\lambda_{n-k+1},\lambda_{n-k+2}, \ldots,\lambda_{n-1},\lambda_{n}\}U^*.
    \]
    Let $N_k$ be a  matrix whose columns form a basis for $\ker(E+\hat \Delta_E)$.
    Then we obtain
    \begin{align}\label{Sd:muintro}
        (d_{hi}^{\mathbb S}(E,J,R))^2 &\leq  \inf\Big\{ \lambda_{n-k+1}(E)^2 + \|\Delta_J\|^2 + \|\Delta_R\|^2 : (\hat \Delta_E,\Delta_j,\Delta_R)\in \mathbb S,\nonumber \\ & \hspace{3cm} \exists\, x\neq 0 \text{ such that } N_k^* (J+\Delta_J-R-\Delta_R) N_k x = 0 \Big\}
        \nonumber\\
        &=\lambda_{n-k+1}(E)^2 + \mu_k^{\mathbb S},
    \end{align}
    where
    \begin{eqnarray}\label{hu_hi}
        \mu_k^{\mathbb S}:=\inf\Big\{ \|\Delta_J\|^2 + \|\Delta_R\|^2 : (\hat \Delta_E,\Delta_j,\Delta_R)\in \mathbb S,\exists\, x\neq 0 \text{ such that } N_k^* (J+\Delta_J-R-\Delta_R) N_k x = 0 \Big\}.
    \end{eqnarray}
    Since $J+\Delta_J$ is skew-Hermitian and $R+\Delta_R$ is Hermitian positive semidefinite, the condition
    \[
    (J+\Delta_J - R - \Delta_R)y = 0
    \]
    holds if and only if
    \[
    (J+\Delta_J)y = 0 \quad \text{and} \quad (R+\Delta_R)y = 0.
    \]
    Therefore,
    \begin{eqnarray}\label{eq:temp2}
        \mu_k^{\mathbb S} &= & \inf\Big\{  \|\Delta_J\|^2 + \|\Delta_R\|^2 : (\hat \Delta_E,\Delta_j,\Delta_R)\in \mathbb S, \exists\, x\neq 0 \text{ such that } \nonumber
        \\ && \hspace{4.5cm} N_k^* (J+\Delta_J) N_k x = 0,~ N_k^* (R+\Delta_R) N_k x = 0 \Big\}.
    \end{eqnarray}
    In~\cite[Lemma 2.8]{MehMS16} it is shown that for a matrix $B$, the equation $B\Delta x = y$ holds if and only if $\Delta x = B^{\dagger}y$ and $BB^{\dagger}y = y$, where $B^{\dagger}$ denotes the Moore-Penrose pseudoinverse. Applying this to the above constraints in~\eqref{eq:temp2}, the relations
    \[
    N_k^* \Delta_J N_k x = - N_k^* J N_k x, \qquad N_k^* \Delta_R N_k x = - N_k^* R N_k x
    \]
    can be equivalently written as
    \[
    \Delta_J N_k x = - (N_k^*)^{\dagger} N_k^* J N_k x, \qquad \Delta_R N_k x = - (N_k^*)^{\dagger} N_k^* R N_k x,
    \]
    since the consistency conditions are trivially satisfied. Hence, we obtain
    \begin{eqnarray}\label{Sd:high1}
        \mu_k^{\mathbb S} &=  \inf\Big\{  \|\Delta_J\|^2 + \|\Delta_R\|^2 : (\hat \Delta_E,\Delta_J,\Delta_R)\in \mathbb S,~\exists\, x\neq 0, \; \Delta_J N_k x = - (N_k^*)^{\dagger} N_k^* J N_k x, \;
        \nonumber \\ &\hspace {5cm}\Delta_R N_k x = - (N_k^*)^{\dagger} N_k^* R N_k x \Big\}.
    \end{eqnarray}
    We now solve the associated minimal-norm mapping problems.
    First, let $\mathbb S=\mathcal S_d(E,J,R)$. By Theorem~\ref{map:herm} and~\cite[Theorem~2.3]{MehMS16}, there exist a skew-Hermitian matrix $\Delta_J$ and a Hermitian negative semidefinite matrix $\Delta_R\le 0$ satisfying the constraints in~\eqref{Sd:high1} if and only if
    \[
    x^* N_k^* J N_k x \in i\mathbb{R}
    \quad \text{and} \quad
    x^* N_k^* R N_k x > 0.
    \]
    These conditions are automatically satisfied due to the structural properties of the matrices $J$ and $R$.
    Among all such admissible mappings, the minimal norms are given by
    \[
    \|\Delta_J\| = \frac{\|(N_k^*)^{\dagger} N_k^* J N_k x\|}{\|N_k x\|}, \qquad
    \|\Delta_R\| = \frac{\|(N_k^*)^{\dagger} N_k^* R N_k x\|^2}{x^* N_k^* R N_k x},
    \]
    and are attained by
    \[
    \Delta_J = i\,\hat H_{(N_k x,i(N_k^*)^{\dagger} N_k^* J N_k x)}, \qquad
    \Delta_R = -\frac{\big((N_k^*)^{\dagger} N_k^* R N_k x\big) \big((N_k^*)^{\dagger} N_k^* R N_k x\big)^*}{x^* N_k^* R N_k x},
    \]
    where $\hat H$ is defined in~\eqref{def:hatH}.
    If $R\ge 0$ is singular and $x^* N_k^* R N_k x = 0$, then necessarily $(N_k^*)^{\dagger} N_k^* R N_k x = 0$. In this case, adopting the convention $0/0 = 0$, the choice $\Delta_R = 0$ satisfies both the mapping constraint in~\eqref{Sd:high1} and the minimal-norm condition. Moreover, the resulting perturbation satisfies $R + \Delta_R \ge 0$; see~\cite[Lemma~4.1]{MehMS16}.
    Combining these observations in~\eqref{Sd:high1}, we obtain
    \begin{equation}\label{Sd:high11}
    \mu_k^{\mathcal S_d} =  \inf_{x \neq 0} \left\{
    \frac{\|(N_k^*)^{\dagger} N_k^* J N_k x\|^2}{\|N_k x\|^2} +
    \frac{\|(N_k^*)^{\dagger} N_k^* R N_k x\|^4}{\big(x^* N_k^* R N_k x\big)^2} \right\},
    \end{equation}
       and thus from~\eqref{Sd:muintro} and~\eqref{Sd:high11}, we have
    \[
    (d_{hi}^{\mathcal S_d}(E,J,R))^2 \le \min_{1 \le k \le n} \inf_{x \neq 0} \left\{ \lambda_{n-k+1}(E)^2 +
    \frac{\|(N_k^*)^{\dagger} N_k^* J N_k x\|^2}{\|N_k x\|^2} +
    \frac{\|(N_k^*)^{\dagger} N_k^* R N_k x\|^4}{\big(x^* N_k^* R N_k x\big)^2} \right\}.
    \]
    Next, consider $\mathbb S=\mathcal S_i(E,J,R)$. By Theorem~\ref{map:herm}, there exist a skew-Hermitian matrix $\Delta_J$ and a Hermitian matrix $\Delta_R$ satisfying the constraints in~\eqref{Sd:high1} if and only if
    \[
    x^* N_k^* J N_k x \in i\mathbb{R}
    \quad \text{and} \quad
    x^* N_k^* R N_k x \in \mathbb{R},
    \]
    which again holds trivially due to the structure of $J$ and $R$.
    The corresponding minimal norms are
    \[
    \|\Delta_J\| = \frac{\|(N_k^*)^{\dagger} N_k^* J N_k x\|}{\|N_k x\|},
    \qquad
    \|\Delta_R\| = \frac{\|(N_k^*)^{\dagger} N_k^* R N_k x\|}{\|N_k x\|},
    \]
    and are attained by
    \[
    \Delta_J = -i\,\hat H_{(N_k x,\, -i (N_k^*)^{\dagger} N_k^* J N_k x)},
    \qquad
    \Delta_R = \hat H_{(N_k x,\,-(N_k^*)^{\dagger} N_k^* R N_k x)},
    \]
    where $\hat H$ is defined in~\eqref{def:hatH}. Note that $(R+\Delta_R)Nx=0$, hence on applying~\cite[Lemma 4.4]{MehMS16} we have $R+\Delta_R\geq 0$.
    Applying these minimal-norm mappings to~\eqref{Sd:high1} yields
    \begin{equation}\label{Sd:high22}
    \mu_k^{\mathcal S_i} =  \inf_{x \neq 0} \left\{
    \frac{\|(N_k^*)^{\dagger} N_k^* J N_k x\|^2}{\|N_k x\|^2} +
    \frac{\|(N_k^*)^{\dagger} N_k^* R N_k x\|^2}{\|N_k x\|^2} \right\}.
    \end{equation}
    Thus in view of~\eqref{Sd:muintro} and~\eqref{Sd:high22}, we have
    \[
    (d_{hi}^{\mathcal S_i}(E,J,R))^2 \le \min_{1 \le k \le n} \inf_{x \neq 0} \left\{
    \lambda_{n-k+1}(E)^2 +
    \frac{\|(N_k^*)^{\dagger} N_k^* J N_k x\|^2}{\|N_k x\|^2} +
    \frac{\|(N_k^*)^{\dagger} N_k^* R N_k x\|^2}{\|N_k x\|^2} \right\}.
    \]
    This completes the proof.
    %
\end{proof}

\begin{remark}{\rm
    Observe that the definition of the distance to higher index $d_{hi}^{\mathbb S}(E,J,R)$ involves second-order perturbation terms, since the rank condition depends on products of perturbations. To overcome this difficulty, in the proof of the theorem we have employed a two-stage  procedure: first perturbing $E$ to increase the dimension of its null space, and then determining the minimal structured perturbations in $J$ and $R$. As a consequence, the result obtained in the above theorem provides only an upper bound for the distance to the nearest dHDAE system of higher index.

    At present, we cannot make a definitive statement about the tightness of this bound; nevertheless, it is expected to provide a good estimate in practice. In contrast, when perturbations are restricted to $J$ and $R$, i.e., in the computation of $d_{hi}^{\mathbb S}(J,R)$, then no second-order perturbation terms arise. This allows for an exact characterization of the distance to the nearest dHDAE of higher index, see the following section.
    }
\end{remark}

In this section we have derived characterizations and bounds for the three distances under structured perturbations to all three coefficients. Since in some applications the matrix $E$ is fixed and not subject to perturbations, in the next section we present perturbation results where the perturbation is restricted  to $\Delta_E=0$.

\section{Distances under  partial perturbations}\label{sec:partial}
In this section, we present analytic results for the various distances defined in Definition~\ref{def:distances} for dHDAE systems of the form~\eqref{dhdae1} by fixing $E$ and allowing only structured perturbations to $J$ and $R$, i.e. by considering the perturbation sets  as $\mathcal S_d(J,R):=\mathcal S_d(0,J,R)$
and $\mathcal S_i(J,R):=\mathcal S_i(0,J,R)$ in~\eqref{set:Sd} and~\eqref{set:Si}, respectively. The corresponding distances are denoted by $d_{sing}^{\mathbb S}(J,R)$, $d_{hi}^{\mathbb S}(J,R)$, $d_{im}^{\mathbb S}(J,R)$, and $d_{inst}^{\mathbb S}(J,R)$,
respectively, where $\mathbb S\in \{\mathcal S_d,\mathcal S_i\}$.


If the matrix $E$ is nonsingular, then the distance to the boundary of the robustly asymptotically stable systems is equal to the distance to a dHDAE with purely imaginary eigenvalues, as the perturbed system will always be regular and of index zero. This distance was obtained in~\cite{BenMPS25} by transforming the dHDAE~\eqref{dhdae1} to an ordinary dH system of the form $\dot{x}=(J-R)Q$, where $Q=E^{-1}$.

The situation is different for $E\geq 0$ with $E$ being singular. In contrast to the case $E>0$ we now can have loss of regularity of the pencil, an increase in the index, or the presence of a finite eigenvalue on the imaginary axis.
In the following, we first focus on moving finite eigenvalues to the imaginary axis, considering only structure-preserving perturbations in $J$ and $R$ while keeping $E$ fixed. The following result, provides an explicit characterization of this distance.
\begin{theorem}\label{thm:dist-imJR}
	Consider a robustly asymptotically stable dHDAE system of the form~\eqref{dhdae1}. Then
	\begin{enumerate}
		\item the distance to the nearest dH pencil with purely imaginary eigenvalues $d_{im}^{\mathcal S_d}(J,R)$ is given by
		\begin{equation}\label{eq:dist-imp}
			(d_{im}^{\mathcal S_d}(J,R))^2 = \inf_{\omega \in \R} \inf_{x\in \C^{n}\setminus \{0\}} \left\{\left(\frac{x^*R^2x}{x^*Rx} \right)^2+\frac{x^*(i\omega E -J)^*(i\omega E -J)x}{x^*x} \right\},
		\end{equation}
		\item if $R>0$, the distance to the nearest dH pencil with purely imaginary eigenvalues $d_{im}^{\mathcal S_i}(J,R)$ is given by
		\begin{equation*}
			d_{im}^{\mathcal S_i}(J,R)=\inf_{\omega\in \R} \sigma_{\min}\left( \mat{c}R\\i\omega E - J \rix \right)
		\end{equation*}
		and, if $R\geq 0$ is singular, then
		\begin{equation*}
			d_{im}^{\mathcal S_i}(J,R)\geq \inf_{\omega\in \R} \sigma_{\min}\left( \mat{c}R\\i\omega E - J \rix \right).
		\end{equation*}
	\end{enumerate}
\end{theorem}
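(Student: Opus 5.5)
The plan is to follow the proof of Theorem~\ref{thm:dist-imagEJR}, now with $\Delta_E=0$. Fix $\omega\in\R$. Because the system is robustly asymptotically stable, the argument of Lemma~\ref{lem:tfae-map} applies, with the pair $(E,J)$ replaced by the full skew/dissipative splitting. It shows that $i\omega$ is an eigenvalue of the perturbed pencil $\lambda E-(J+\Delta_J-R-\Delta_R)$ if and only if there is $x\neq 0$ with $(R+\Delta_R)x=0$ and $(i\omega E-J-\Delta_J)x=0$. This follows from Theorem~\ref{thm:singindspec}: from $x^*(i\omega E-(J+\Delta_J)+(R+\Delta_R))x=0$, the real part gives $x^*(R+\Delta_R)x=0$, hence $(R+\Delta_R)x=0$ by semidefiniteness. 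Therefore
\[
(d_{im}^{\mathbb S}(J,R))^2=\inf_{\omega\in\R}\inf_{x\neq0}\inf\{\|\Delta_J\|^2+\|\Delta_R\|^2:\Delta_Jx=(i\omega E-J)x,\ \Delta_Rx=-Rx,\ (0,\Delta_J,\Delta_R)\in\mathbb S\}.
\]
The key simplification compared with Section~\ref{sec:dist} is that the two constraints now decouple and are linear in $x$ alone. No auxiliary vectors $v_E,v_J$ are needed.

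Next I would solve the two mapping problems separately. For $\Delta_J$, Theorem~\ref{map:herm}\,b) requires $\Re(x^*(i\omega E-J)x)=0$. This holds automatically, since $E$ is Hermitian and $J$ is skew-Hermitian. The minimal norm is $\|(i\omega E-J)x\|/\|x\|$, which produces the second term in \eqref{eq:dist-imp}.

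For $\mathcal S_d$, Theorem~\ref{map:def} applied to $-\Delta_R x=Rx$ requires $x^*Rx>0$ and gives minimal norm $\|Rx\|^2/(x^*Rx)$, attained at $\Delta_R=-Rxx^*R/(x^*Rx)$. By \cite[Lemma 4.1]{MehMS16}, this choice satisfies $R+\Delta_R\ge0$, so the bound is attained and we get equality rather than an inequality. If $x^*Rx=0$ then $Rx=0$, and the convention \eqref{eq:defrq} gives $\Delta_R=0$, which is consistent. Summing the squares yields \eqref{eq:dist-imp}, once we note that $\|Rx\|^2=x^*R^2x$.

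For $\mathcal S_i$, Theorem~\ref{map:herm}\,a) gives minimal $\|\Delta_R\|=\|Rx\|/\|x\|$. The total is then
\[
\frac{x^*(R^2+(i\omega E-J)^*(i\omega E-J))x}{x^*x},
\]
whose infimum over $x$ is $\sigma_{\min}^2$ of the stacked matrix $\mat{c}R\\ i\omega E-J\rix$. Dropping the constraint $R+\Delta_R\ge0$ already gives the lower bound in the singular case.

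The main obstacle is showing that the Hermitian minimal mapping $\hat H_{(x,-Rx)}$ keeps $R+\Delta_R\ge0$ when $R>0$. For this I would invoke \cite[Lemma 4.4]{MehMS16}, exactly as in the proofs of Theorems~\ref{thm:dist_singEJR_i} and \ref{thm:dist-hiEJR}. Its hypothesis, $(R+\Delta_R)x=0$ with $R>0$, holds by construction, so equality follows. A secondary point is to check that the infimum over $\omega$ and $x$ is well defined and finite. This is fine, because each $x$ yields a feasible perturbation.
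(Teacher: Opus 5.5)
Your proposal is correct and follows essentially the same route as the paper. Both reduce to $(R+\Delta_R)x=0$, $(J+\Delta_J)x=i\omega Ex$, solve the two decoupled minimal-norm mapping problems via Theorems~\ref{map:herm} and~\ref{map:def}, and for $\mathcal S_i$ with $R>0$ invoke \cite[Lemma~4.4]{MehMS16} to certify $R+\Delta_R\ge 0$.

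For part~1 you are actually more complete than the paper's written proof. That proof only records the inequality $(\vartheta_\omega^{\mathcal S_d})^2\ge\cdots$, because it never checks $R+\Delta_R\ge 0$. Your observation that the rank-one minimizer $-(Rx)(Rx)^*/(x^*Rx)$ stays in $\mathcal S_d(J,R)$ is exactly what the stated equality needs. That follows from \cite[Lemma~4.1]{MehMS16}, or directly from Cauchy--Schwarz in the $R$-semi-inner product.
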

\begin{proof}
	By Definition~\ref{def:distances}, we have
	\begin{align*}
		d_{im}^{\mathbb S}(J,R)=\inf \{ \nnrm{(\Delta_J,\Delta_R)}~:~(\Delta_J,\Delta_R)\in\mathbb S,~\Lambda(E,J+\Delta_J - (R+\Delta_R)) \cap i\R \neq \emptyset \}.
	\end{align*}
	Since for $(\Delta_J,\Delta_R)\in \mathbb S\in \{\mathcal S_d(J,R),\mathcal S_i(J,R)\}$, the perturbed dH pair $(E,J+\Delta_J - (R+\Delta_R))$ still has the dH structure, by using~\cite[Lemma 4.1]{MehMS16}, we obtain
	\begin{align}
		d_{im}^{\mathbb S}(J,R)
		&=\inf \{ \nnrm{(\Delta_J,\Delta_R)}:(\Delta_J,\Delta_R)\in\mathbb S,~(R+\Delta_R)x=0 \text{ for some eigenvector $x$ of } (E,J+\Delta_J) \}\nonumber\\
		&=\inf\{ \nnrm{(\Delta_J,\Delta_R)}:(\Delta_J,\Delta_R)\in\mathbb S,~ (R+\Delta_R)x=0 \text{ for some $x\in \C^n\setminus\{0\}$ satisfying } \nonumber\\ &\hspace{5cm}  (J+\Delta_J)x= i\omega E x, \omega\in \R\}\nonumber\\
		&= \inf_{\omega\in \R}\inf_{x\in\C^{n}\setminus \{0\}} \Big(\inf \{ \nnrm{(\Delta_J,\Delta_R)}:(\Delta_J,\Delta_R)\in\mathbb S,~ (R+\Delta_R)x=0, (J+\Delta_J)x= i\omega E x\} \Big)\nonumber\\
		&=\inf_{\omega \in \R } \vartheta^{\mathbb S}_{\omega} \label{eq:def11},
	\end{align}
	%
	where, for a given scalar $\omega$, we have
	\begin{align}\label{eq:def12}
		\vartheta^{\mathbb S}_{\omega}:= \inf_{x\in\C^{n}\setminus \{0\}} \Big(\inf \{ \nnrm{(\Delta_J,\Delta_R)}:(\Delta_J,\Delta_R)\in\mathbb S,~ (R+\Delta_R)x=0,(J+\Delta_J)x= i\omega E x\} \Big).
	\end{align}
	Let $\mathbb S=\mathcal S_d(J,R)$. Then, for the inner optimization in~\eqref{eq:def12}, we show that the minimal value can be expressed as the sum of two generalized Rayleigh quotients, which depend on the variables $x$ and $\omega$. For this, we first solve two mapping problems.

    By Theorem~\ref{map:herm} and~\cite[Theorem~2.3]{MehMS16}, there exist a skew-Hermitian matrix $\Delta_J$ and a Hermitian negative semidefinite matrix $\Delta_R\le 0$ satisfying the constraints in~\eqref{eq:def12} if and only if
    \[
    x^*(i\omega E - J)x \in i\mathbb{R}
    \quad \text{and} \quad
    -x^*Rx<0.
    \]
    These conditions are automatically satisfied due to the structural properties of the matrices $E,~J$, and $R$.
    Among all such admissible mappings, the minimal norms are given by
    \begin{equation}\label{norm:jdef}
		\|\Delta_J\|=\frac{\|(i\omega E - J)x\|}{\|x\|}, \quad \text{and} \quad
		\|\Delta_R\|=\frac{\|Rx\|^2}{x^*Rx},
	\end{equation}
    and are attained by $\Delta_J=-i \hat H_{(x, i(i\omega E-J)x)}$, where $\hat H$ is defined in~\eqref{def:hatH}, and $\Delta_R=\frac{-1}{x^*Rx}(Rx)(Rx)^*$ (if $Rx\neq 0$). If $Rx=0$ (this case may arise if $R$ is singular), then again from {Theorem~\ref{map:def}}, $\Delta_R=0$ is the solution that satisfies~\eqref{norm:jdef} by defining the fraction $\frac{0}{0}$ to have value $0$. Note that while solving the mapping constraint $\Delta_Rx=-Rx$, we have not considered the additional condition $R+\Delta_R\geq 0$. Using the fact that $\mathcal S_d(J,R)\subseteq \{(\Delta_J,\Delta_R): \Delta_J^*=-\Delta_J, \Delta_R^*=\Delta_R\leq 0\}$, and using~\eqref{norm:jdef} in~\eqref{eq:def12}, we obtain
	%
	%
	\begin{equation}\label{eq:def2}
		(\vartheta^{\mathcal S_d}_{\omega})^2\geq \inf_{x\in\C^{n}\setminus\{0\}} \left\{ \frac{\|Rx\|^4}{(x^*Rx)^2}+\frac{\|(i\omega E - J)x\|^2}{\|x\|^2} \right\}.
	\end{equation}
	%
    %
    %
    Next we consider $\mathbb S=\mathcal S_i(J,R)$. By Theorem~\ref{map:herm}, there exist a skew-Hermitian matrix $\Delta_J$ and a Hermitian matrix $\Delta_R$ satisfying the constraints in~\eqref{eq:def12} if and only if
    \[
    x^* (i\omega E - J)x \in i\mathbb{R}
    \quad \text{and} \quad
    x^* R x \in \mathbb{R},
    \]
    which again holds trivially due to the structure of $E,~J$ and $R$.
    The corresponding minimal norms are
    \[
    \|\Delta_J\| = \frac{\|(i\omega E - J)) x\|}{\|x\|},
    \qquad
    \|\Delta_R\| = \frac{\|R x\|}{\|x\|},
    \]
    and are attained by
    \[
    \Delta_J = -i\,\hat H_{(x, i(i\omega E - J) x)},
    \qquad
    \Delta_R = -\hat H_{(x,Rx)},
    \]
    where $\hat H$ is defined in~\eqref{def:hatH}. Note that $(R+\Delta_R)x=0$, hence, by applying~\cite[Lemma 4.4]{MehMS16} we have $R+\Delta_R\geq 0$, implying that $(\Delta_J,\Delta_R)\in \mathcal S_i(J,R)$. Using the above arguments in~\eqref{eq:def12}, we obtain
    \begin{equation*}
		\vartheta^{\mathcal S_i}_{\omega} = \inf_{x\in\C^{n}\setminus\{0\}} \left\{ \frac{\sqrt{\|Rx\|^2+\|(i\omega E - J)x\|^2}}{\|x\|}\right\} = \sigma_{\min}\left(\mat{c}R\\i\omega E - J \rix\right).
	\end{equation*}
    This completes the proof.
	%
\end{proof}
 In the following, we state a result for the structured distances to singularity $d_{sing}^{\mathcal S_d}(J,R)$ and $d_{sing}^{\mathcal S_i}(J,R)$ keeping $E$ fixed. We skip its proof as the distance $d_{sing}^{\mathcal S_d}(J,R)$ was obtained in~\cite[Table A.1]{PraS22} and is restated here adopting the convention that $\frac{0}{0}=0$. On the other hand, the proof for $d_{sing}^{\mathcal S_i}(J,R)$ follows by arguments similar to the proof of $d_{sing}^{\mathcal S_i}(E,J,R)$ in Theorem~\ref{thm:dist_singEJR_i}.

\begin{theorem}\label{thm:dist-singJR}
	Consider a robustly asymptotically stable dHDAE system of the form~\eqref{dhdae1}. Then, the distance to singularity $d_{sing}^{\mathbb S}(J,R)$ with respect to perturbations from the set $\mathbb S\in \{\mathcal S_d(J,R),\mathcal S_i(J,R)\}$ is $\infty$, if $E>0$. If $E\geq 0$ is singular, then let the columns of the matrix $N$ form a basis of the null space of $E$. Then
    \begin{enumerate}
        \item the distance to singularity $d_{sing}^{\mathcal S_d}(J,R)$ with respect to perturbations from the set $\mathcal S_d$ is
    	\begin{equation}\label{eq:dist-sing}
    		(d_{sing}^{\mathcal S_d}(J,R))^2=
    			\inf_{x\in \C^{r}\setminus \{0\}} \left\{\left(\frac{x^*N^*R^2Nx}{x^*N^*RNx} \right)^2+\frac{x^*N^*J^*JNx}{x^*x} \right\},
    	\end{equation}
        \item if $R>0$, then the distance to singularity $d_{sing}^{\mathcal S_i}(J,R)$ with respect to perturbations from the set $\mathcal S_i(J,R)$ is
        \begin{equation}
            d_{sing}^{\mathcal S_i}(J,R) = \sqrt{\lambda_{\min}(N^*(R^2-J^2)N)},
        \end{equation}
        and, if $R\ge 0$ is singular, then
        \begin{equation}
            d_{sing}^{\mathcal S_i}(J,R) \geq \sqrt{\lambda_{\min}(N^*(R^2-J^2)N)}.
        \end{equation}
    \end{enumerate}
\end{theorem}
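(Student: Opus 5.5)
The plan is to reduce the distance to singularity to a mapping problem posed on the null space of $E$, and then apply the minimal-norm mapping results of Section~\ref{sec:mapping}.

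First, the case $E>0$. Since $E$ is not perturbed, the perturbed pencil has the form $\lambda E-(J+\Delta_J-R-\Delta_R)$ with $E$ invertible, so $\det(\lambda E-\cdots)$ has leading coefficient $\det E\neq 0$. The pencil is therefore always regular and the infimum is over the empty set, i.e.\ $d_{sing}^{\mathbb S}(J,R)=\infty$.

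Now let $E\geq 0$ be singular, with $N$ an orthonormal basis of $\ker E$ (I choose $N$ orthonormal, so that $\|Nx\|=\|x\|$). By Corollary~\ref{cor}~a), the perturbed pencil is singular if and only if there is $z\neq 0$ with $Ez=0$, $(J+\Delta_J)z=0$ and $(R+\Delta_R)z=0$. Writing $z=Nx$, this gives
\[
(d_{sing}^{\mathbb S}(J,R))^2=\inf_{x\neq 0}\inf\{\|\Delta_J\|^2+\|\Delta_R\|^2:(\Delta_J,\Delta_R)\in\mathbb S,\ \Delta_JNx=-JNx,\ \Delta_RNx=-RNx\}.
\]
This is the same decoupling as in~\eqref{eq:def12}, with $i\omega E-J$ replaced by $-J$ and $x$ replaced by $Nx$. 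In particular, no second-order terms arise, because $E$ is fixed.

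For $\mathcal S_d$, I would use Theorem~\ref{map:herm}~b) for $\Delta_J$. Its solvability condition $\Re(x^*N^*JNx)=0$ holds automatically, and it gives minimal norm $\|JNx\|/\|x\|$. For $\Delta_R\leq 0$ I would use Theorem~\ref{map:def}, which requires $x^*N^*RNx>0$ and gives minimal norm $\|RNx\|^2/(x^*N^*RNx)$, attained by $\Delta_R=-RNx(RNx)^*/(x^*N^*RNx)$. If $RNx=0$, I take $\Delta_R=0$ with the convention $0/0=0$. This minimizer satisfies $R+\Delta_R\geq0$ by \cite[Lemma 4.1]{MehMS16}, so it is admissible and the bound is attained. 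Summing the squared norms gives~\eqref{eq:dist-sing}. Alternatively, one could simply cite \cite[Table A.1]{PraS22}.

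For $\mathcal S_i$, the Hermitian mapping $\Delta_R=-\hat H_{(Nx,RNx)}$ from Theorem~\ref{map:herm}~a) has norm $\|RNx\|/\|x\|$. The sum of squares is then $x^*N^*(R^2-J^*J)Nx/x^*x=x^*N^*(R^2-J^2)Nx/x^*x$, whose infimum over $x\neq 0$ is $\lambda_{\min}(N^*(R^2-J^2)N)$. Dropping the constraint $R+\Delta_R\geq 0$ only enlarges the feasible set, so this is always a lower bound.

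The main obstacle is proving equality when $R>0$, i.e.\ verifying $R+\Delta_R\geq 0$ for the minimal-norm Hermitian mapping. My plan is to invoke \cite[Lemma 4.4]{MehMS16}, exactly as in Theorems~\ref{thm:dist_singEJR_i} and~\ref{thm:dist-imJR}: since $(R+\hat\Delta_R)Nx=0$ and $R>0$, the lemma yields $R+\hat\Delta_R\geq 0$. The minimizer is then feasible and the lower bound is attained. If $R$ is singular this argument is unavailable, and only the inequality remains, as stated.
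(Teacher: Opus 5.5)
Your proposal is correct and follows the route the paper indicates. For $\mathcal S_i(J,R)$ it is exactly the argument of Theorem~\ref{thm:dist_singEJR_i}: the common-kernel characterization from Corollary~\ref{cor}, decoupled minimal-norm Hermitian and skew-Hermitian mappings, and \cite[Lemma 4.4]{MehMS16} to get $R+\hat\Delta_R\geq 0$ when $R>0$. For $\mathcal S_d(J,R)$ the paper only cites \cite[Table A.1]{PraS22}, and your derivation via Theorem~\ref{map:def} and \cite[Lemma 4.1]{MehMS16} is the same mapping argument as in Theorem~\ref{thm:dist-imJR} with $i\omega E$ dropped. Your choice of $N$ with orthonormal columns is necessary, not just convenient: the denominator $x^*x$ in \eqref{eq:dist-sing} and the value $\lambda_{\min}(N^*(R^2-J^2)N)$ are only correct for such $N$.
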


 In the following theorem, we study the distance to higher index, allowing structure-preserving perturbations only in $J$ and $R$ while keeping $E$ fixed.

\begin{theorem}\label{thm:dist-hiJR}
    Consider an asymptotically stable dHDAE system of the form~\eqref{dhdae1}. Then
    \begin{enumerate}
        \item the distance to higher index $d_{hi}^{\mathcal S_d}(J,R)$ with respect to perturbations from the set $\mathcal S_{d}(J,R)$ is given by
        \begin{equation}
            (d_{hi}^{\mathcal S_d}(J,R))^2 = \inf_{x\neq 0} \left\{\frac{\|(N^*)^{\dagger} N^* J N x\|^2}{\|N x\|^2} + \frac{\|(N^*)^{\dagger} N^* R N x\|^4} {\big(x^* N^* R N x\big)^2} \right\},
        \end{equation}
        where $N$ denotes a unitary  matrix whose columns form a basis for $\ker(E)$,
        \item the distance to higher index $d_{hi}^{\mathcal S_i}(J,R)$ with respect to perturbations from the set $\mathcal S_{i}(J,R)$ is given by
        \begin{equation}
            (d_{hi}^{\mathcal S_i}(J,R))^2 = \inf_{x\neq 0} \left\{ \frac{\|(N^*)^{\dagger} N^* J N x\|^2}{\|N x\|^2} + \frac{\|(N^*)^{\dagger} N^* R N x\|^2} {\|N x\|^2} \right\},
        \end{equation}
        where $N$ denotes a unitary matrix whose columns form a basis for $\ker(E)$.
    \end{enumerate}
\end{theorem}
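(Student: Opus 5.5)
The plan is to follow the second stage of the proof of Theorem~\ref{thm:dist-hiEJR}, now with $\Delta_E=0$. Since $E$ is fixed, $\ker(E+\Delta_E)=\ker(E)$ is fixed, and the second-order coupling disappears. This is why we expect equality rather than only an upper bound.

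\textbf{Reformulation.} By Theorem~\ref{thm:stabledh} and Corollary~\ref{cor}, the perturbed pencil $(E,J+\Delta_J-R-\Delta_R)$ has index $>1$ exactly when $N^*(J+\Delta_J-R-\Delta_R)N$ is singular. Here $N$ has orthonormal columns spanning $\ker(E)$. So we write
\[
d_{hi}^{\mathbb S}(J,R)^2=\inf_{x\neq 0}\inf\{\|\Delta_J\|^2+\|\Delta_R\|^2:(\Delta_J,\Delta_R)\in\mathbb S,\ N^*(J+\Delta_J-R-\Delta_R)Nx=0\}.
\]

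\textbf{Splitting the singularity condition.} Set $y=Nx$. Then $y^*(J+\Delta_J-R-\Delta_R)y=0$. The real part of this expression is $-y^*(R+\Delta_R)y$, which vanishes only if $(R+\Delta_R)y=0$, because $R+\Delta_R\ge 0$. Hence the constraint is equivalent to the pair
\[
N^*(J+\Delta_J)Nx=0,\qquad N^*(R+\Delta_R)Nx=0.
\]
Note that this equivalence needs the constraint $R+\Delta_R\geq 0$, which is part of both perturbation sets. We then apply \cite[Lemma 2.8]{MehMS16} with $B=N^*$. Since $N^*(N^*)^\dagger=I$, the consistency conditions hold automatically. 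The projected equations therefore become the mapping problems
\[
\Delta_J Nx=-(N^*)^\dagger N^*JNx,\qquad \Delta_R Nx=-(N^*)^\dagger N^*RNx.
\]
Every solution of these satisfies the projected constraints, and conversely every admissible $\Delta_J,\Delta_R$ is captured by them. So the inner infimum is exactly a minimal-norm mapping problem.

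\textbf{Solving the mapping problems.} For $\mathcal S_i$ we use Theorem~\ref{map:herm}. The solvability conditions hold because $x^*N^*JNx\in i\R$ and $x^*N^*RNx\in\R$. The minimal norms are $\|(N^*)^\dagger N^*JNx\|/\|Nx\|$ for $\Delta_J$ and $\|(N^*)^\dagger N^*RNx\|/\|Nx\|$ for $\Delta_R$.

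For $\mathcal S_d$ we use Theorem~\ref{map:def} for $\Delta_R\le 0$. This gives the norm $\|(N^*)^\dagger N^*RNx\|^2/(x^*N^*RNx)$, with the convention $0/0=0$ when $RNx=0$, in which case $\Delta_R=0$. Taking the infimum over $x$ then gives both formulas.

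\textbf{Main obstacle: attainment and admissibility.} A mapping-only argument gives just a lower bound, so we must check that the minimizers actually lie in $\mathbb S$, i.e. that $R+\Delta_R\ge 0$. The key observation is that $(N^*)^\dagger=N$ because $N$ is an isometry, so $(N^*)^\dagger N^*RNx=NN^*RNx$.

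We then proceed in two steps.
\begin{itemize}
\item[(i)] For $\mathcal S_d$, the rank-one $\Delta_R=-\frac{ww^*}{x^*N^*RNx}$ with $w=NN^*RNx$ satisfies $R+\Delta_R\ge 0$ by \cite[Lemma 4.1]{MehMS16}, as in the proof of Theorem~\ref{thm:dist-hiEJR}.
\item[(ii)] For $\mathcal S_i$, the Hermitian minimizer satisfies $(R+\Delta_R)Nx=RNx-NN^*RNx$. This vector is not zero in general, so \cite[Lemma 4.4]{MehMS16} does not apply directly. We would therefore compare with the perturbation that annihilates $RNx$ entirely.
\end{itemize}
If that comparison fails, we would instead argue directly via Cauchy interlacing that the projected minimizer keeps $R+\Delta_R\ge 0$. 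This is the step we expect to be most delicate. If it cannot be closed in general, the $\mathcal S_i$ formula would have to be stated under the extra assumption that the Hermitian minimizer stays admissible, for instance $R>0$, as in Theorem~\ref{thm:dist-hiEJR}.
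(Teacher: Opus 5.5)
Your proposal follows the paper's own route: apply \cite[Lemma~2.8]{MehMS16} with $B=N^*$, then solve two decoupled minimal-norm mapping problems. It breaks at the reduction step, and it breaks for both perturbation classes, not only for $\mathcal S_i$.

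\textbf{The reduction step.} Since $N^*$ has a nontrivial kernel, $N^*\Delta_R Nx=-N^*RNx$ fixes $\Delta_RNx$ only up to a vector in $\operatorname{range}(N)^\perp$. So your claim that every admissible pair is ``captured'' by $\Delta_RNx=-NN^*RNx$ is false.
\begin{itemize}
\item For $\Delta_J$ this is harmless. The choice with zero orthogonal component is norm-minimal, so the $J$-term $\|N^*JNx\|^2/\|x\|^2$ is correct.
\item For $\Delta_R$ it is not harmless. Your own splitting step shows that $R+\Delta_R\ge 0$ forces $(R+\Delta_R)Nx=0$, i.e.\ $\Delta_RNx=-RNx$ in full.
\end{itemize}
Conversely, any $\Delta_R$ with $\Delta_RNx=-NN^*RNx$ satisfies $x^*N^*(R+\Delta_R)Nx=0$, while $(R+\Delta_R)Nx=(I-NN^*)RNx$. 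Such a $\Delta_R$ therefore lies outside $\mathbb S$ whenever $RNx\notin\operatorname{range}(N)$.

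So the obstacle you flag in (ii) is an impossibility, not a delicate check. No interlacing argument and no assumption $R>0$ can close it. The same computation refutes (i): \cite[Lemma~4.1]{MehMS16} concerns the update $-\frac{(Ry)(Ry)^*}{y^*Ry}$ by the full vector $Ry$, not by the projection $NN^*RNx$.

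\textbf{The formulas themselves fail.} Take $E=\diag(0,1)$, $J=0$, $R=\begin{bmatrix}1&1\\1&2\end{bmatrix}>0$, so $N=e_1$.
\begin{itemize}
\item The pencil has the single finite eigenvalue $-1$ and $N^*(J-R)N=-1\neq 0$, so the system is robustly asymptotically stable.
\item Both right-hand sides equal $1$. The minimizer you (and the paper) construct is $\Delta_R=-e_1e_1^*$, for which $R+\Delta_R=\begin{bmatrix}0&1\\1&2\end{bmatrix}$ is indefinite.
\item Every admissible perturbation must satisfy $(R+\Delta_R)e_1=0$. Hence $\|\Delta_R\|\geq\sqrt2$ in $\mathcal S_i$, and $\|\Delta_R\|\geq 2$ in $\mathcal S_d$ by Theorem~\ref{map:def}.
\item These bounds are attained by $\Delta_R=\begin{bmatrix}-1&-1\\-1&1\end{bmatrix}$ and $\Delta_R=-\begin{bmatrix}1&1\\1&1\end{bmatrix}$, respectively. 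Adding $\Delta_J=\begin{bmatrix}0&-\epsilon\\ \epsilon&0\end{bmatrix}$ gives a regular index-two pencil if you want one.
\item So the true squared distances are $2$ and $4$, not $1$.
\end{itemize}

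The paper's proof takes exactly the same step. It uses the pseudoinverse reduction and then asserts $(R+\Delta_R)Nx=0$ before invoking \cite[Lemmas~4.1, 4.4]{MehMS16}. As stated, the formulas are therefore only lower bounds.

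\textbf{What your argument does prove.} Keep the unprojected constraint $\Delta_RNx=-RNx$ and apply \cite[Lemmas~4.1 and 4.4]{MehMS16} to the full vector $RNx$. Your splitting argument then gives
\[
(d_{hi}^{\mathcal S_i}(J,R))^2=\inf_{x\neq0}\frac{\|N^*JNx\|^2+\|RNx\|^2}{\|x\|^2},\qquad
(d_{hi}^{\mathcal S_d}(J,R))^2=\inf_{x\neq0}\left\{\frac{\|N^*JNx\|^2}{\|x\|^2}+\frac{\|RNx\|^4}{(x^*N^*RNx)^2}\right\}.
\]
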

\begin{proof}
    From Definition~\eqref{def:distances}, the distance to higher index under structured perturbations from the set $\mathbb S\in \{\mathcal S_d(J,R),\mathcal S_i(J,R)\}$ is given by
    \[
    d_{hi}^{\mathbb S}(J,R) = \inf\Big\{\nnrm{(\Delta_J,\Delta_R)} ~:~ (\Delta_J,\Delta_R)\in \mathbb S,~ \text{ index of } (E,J+\Delta_J-R-\Delta_R) > 1 \Big\}.
    \]
    Equivalently,
    \begin{equation*}
        d_{hi}^{\mathbb S}(J,R) = \inf\Big\{ \nnrm{(\Delta_J,\Delta_R)} ~:~ (\Delta_J,\Delta_R)\in \mathbb S,~ \text{rank}\!\big(N(E)^* (J+\Delta_J-R-\Delta_R) N(E) \big) < n \Big\},
    \end{equation*}
    where $N$ denotes an orthonormal basis matrix for the null space of $E$.
    Note that $d_{hi}^{\mathbb S}(J,R)=\mu_k^{\mathbb S}$, where $\mu_k^{\mathbb S}$ is defined by~\eqref{hu_hi} for $N_k=N$. Thus the proof follows by using the expressions of $\mu_k^{\mathcal S_d}$ and $\mu_k^{\mathcal S_i}$ from~\eqref{Sd:high11} and~\eqref{Sd:high22}, respectively.\end{proof}
\begin{table}[h!]\label{tablesummary}
\centering
\renewcommand{\arraystretch}{1.5}
\begin{tabular}{lccc}
\hline
\textbf{$\mathbb  S$}
& $\mathbf{d_{im}^{\mathbb S}}$
& $\mathbf{d_{sing}^{\mathbb S}}$
& $\mathbf{d_{hi}^{\mathbb S}}$ \\
\hline
$\mathcal S_d(E,J,R)$
& Lower bound: Theorem~\ref{thm:dist-imagEJR}
& Theorem~\ref{thm:dist-singEJR-d}
& Upper bound: Theorem~\ref{thm:dist-hiEJR} \\
$\mathcal S_i(E,J,R)$
& Lower bound: Theorem~\ref{thm:dist-imagEJR}
& Theorem~\ref{thm:dist_singEJR_i}
& Upper bound: Theorem~\ref{thm:dist-hiEJR} \\
$\mathcal S_d(J,R)$
& Theorem~\ref{thm:dist-imJR}
& Theorem~\ref{thm:dist-singJR}
& Theorem~\ref{thm:dist-hiJR} \\

$\mathcal S_i(J,R)$
& Theorem~\ref{thm:dist-imJR}
& Theorem~\ref{thm:dist-singJR}
& Theorem~\ref{thm:dist-hiJR} \\
\hline
\end{tabular}
\caption{Summary of structured distances and corresponding results}
\end{table}

In this section we have presented
bounds and analytic expressions for  the different structured distances. They are summarized in Table~\ref{tablesummary}. In the next section we illustrate these results via  numerical examples.

\section{Numerical examples}\label{sec:numerical}
In this section, we present numerical experiments to illustrate the theoretical results developed in the previous sections. In particular, we demonstrate the computation of the structured distances to systems with purely imaginary  eigenvalues, that are singular, or have higher index for dHDAEs. All computations are carried out in \textsc{Matlab} R2024b. %

In the following, we briefly describe the numerical procedures that are used to compute the various structured stability radii. For several of the distances derived in the previous sections, namely
$d_{sing}^{\mathcal S_d}(E,J,R)$, $d_{sing}^{\mathcal S_d}(J,R)$, $d_{hi}^{\mathcal S_d}(E,J,R)$, $d_{hi}^{\mathcal S_i}(E,J,R),$ $d_{hi}^{\mathcal S_d}(J,R)$, and $d_{hi}^{\mathcal S_i}(J,R)$, the characterization reduces to an optimization problem involving the sum of two generalized Rayleigh quotients. More precisely, these quantities can be written in the form
\[
\inf_{\omega \in \mathbb{R}} \; \inf_{x \neq 0} \left\{ \frac{x^* H_1 x}{x^* x} + \left(\frac{x^* H_2 x}{x^* H_3 x}\right)^2 \right\},
\]
where $H_1, H_2,$ and $H_3$ are Hermitian positive semidefinite matrices (depending on~\(\omega\)).
To evaluate such expressions numerically, we employ a two-level optimization strategy. For the inner minimization with respect to $x$, we use the nonlinear eigenvalue problem (NEPv) characterization and solve the resulting NEPv via level-shifted self-consistent field (SCF) iterations; see~\cite{LuPSB25} for details on this approach. The outer minimization with respect to~\(\omega\) is then carried out using the function \texttt{fminsearch} in \textsc{Matlab}.

In the remaining cases, the computations are simpler. The distances
$d_{sing}^{\mathcal S_i}(E,J,R)$, and $d_{sing}^{\mathcal S_i}(J,R)$ reduce to standard eigenvalue problems and are computed directly using built-in eigenvalue solvers. The quantities $d_{im}^{\mathcal S_i}(E,J,R)$ and $d_{im}^{\mathcal S_i}(J,R)$ lead to a parameter-dependent eigenvalue problems, which we handle by combining eigenvalue computations with an outer minimization over the parameter using \texttt{fminsearch}.

Since the computation of $d_{im}^{\mathcal S_d}(E,J,R)$ involves a constrained optimization problem, we use \textsc{Matlab}’s \texttt{fmincon} function. As \texttt{fmincon} may not always provide highly accurate estimates for the solution of thiis type of problems, a possible direction for future work is to develop more reliable numerical methods by utilizing the special structure of the optimization.

The codes and data of the examples presented below are available at \url{https://gitlab.mpi-magdeburg.mpg.de/prajapati/dhdae-stability-radii.git}.




\begin{example}{\rm
    Consider Example~13 from~\cite{BeaMXZ18}, which arises from the finite element modeling of the acoustic field in the interior of a car. After simplifications, the associated differential–algebraic system is of the form
    \[
    M \ddot{p} + D \dot{p} + K p = B_1 u,
    \]
    where $p$ denotes the vector of coefficients corresponding to the acoustic pressure in the air and the structural displacements. The term $B_1 u$ represents an external force. The matrix $M$ is a symmetric positive semidefinite mass matrix, $D$ is a symmetric positive semidefinite damping matrix, and $K$ is a symmetric positive definite stiffness matrix. The resulting first-order formulation yields the state equation of a pHDAE system, $E \dot{x} = (J-R) x + B u$, where the state vector is $x = \mat{c}\dot{p}\\ p \rix $ and the matrices are given by
    \[
    E = \mat{cc} M & 0 \\ 0 & K \rix,
    \quad
    J = \mat{cc} 0 & -K \\ K & 0 \rix,
    \quad
    R = \mat{cc} D & 0 \\ 0 & 0 \rix, \text{ and}
    \quad
    B = \mat{c} B_1 \\ 0 \rix.
    \]
    For the numerical experiment, we randomly generated the matrices $M, K$, and $D\in \C^{4,4}$ in \textsc{Matlab} and enforce symmetry (and positive definiteness where required). We construct $M \in \mathbb{C}^{4\times 4} $ with rank $2$, so that the matrix $E$ becomes singular and, consequently, the system possesses two infinite eigenvalues. For the generated example,  using the criteria from Section~\ref{sec:distalg},  we verified that the resulting dHDAE system is robustly asymptotically stable. We then computed all the structured distances introduced in Section~\ref{sec:partial}; the resulting values are summarized in Table~\ref{tab:example1} below.
    \begin{table}[h!]
        \centering
        \renewcommand{\arraystretch}{1.5}
        \begin{tabular}{cccc}
            \hline
            \textbf{$\mathbb  S$}
            & $\mathbf{d_{im}^{\mathbb S}(J,R)}$
            & $\mathbf{d_{sing}^{\mathbb S}(J,R)}$
            & $\mathbf{d_{hi}^{\mathbb S}(J,R)}$ \\

            & Theorem~\ref{thm:dist-imJR}
            & Theorem~\ref{thm:dist-singJR}
            & Theorem~\ref{thm:dist-hiJR}\\
            \hline
            $\mathcal S_d(J,R)$
            & 1.7631
            & 5.3705
            & 1.6546         \\

            $\mathcal S_i(J,R)$
            & 1.4351
            & 4.7242
            & 1.6546 \\
            \hline
        \end{tabular}
        \caption{Various structured distances.}
        \label{tab:example1}
    \end{table}
    Recall that the distance to the boundary of the set of robustly asymptotically stable systems is defined as the minimum of the three distances listed in Table~\ref{tab:example1}. Hence, $d_{inst}^{\mathcal S_d}(J,R)=1.6546$ and $d_{inst}^{\mathcal S_i}(J,R)=1.4351$.

    Interestingly, the mechanism through which the boundary of the robustly asymptotically stable region is first reached depends on the chosen perturbation set. When perturbations are restricted to the structure-preserving set $\mathcal S_d(J, R)$, the  nearest  system that is not robustly asymptotically stable is obtained due to an increase in the  index of the system, while the spectral abscissa being negative and the regularity are preserved. In contrast, for the structure-preserving set $\mathcal S_i(J, R)$, the minimal distance for a loss of robust asymptotic stability occurs due to an eigenvalue reaching the imaginary axis.
    \begin{figure}[!h]
        \centering
        \includegraphics[width=0.8\textwidth]{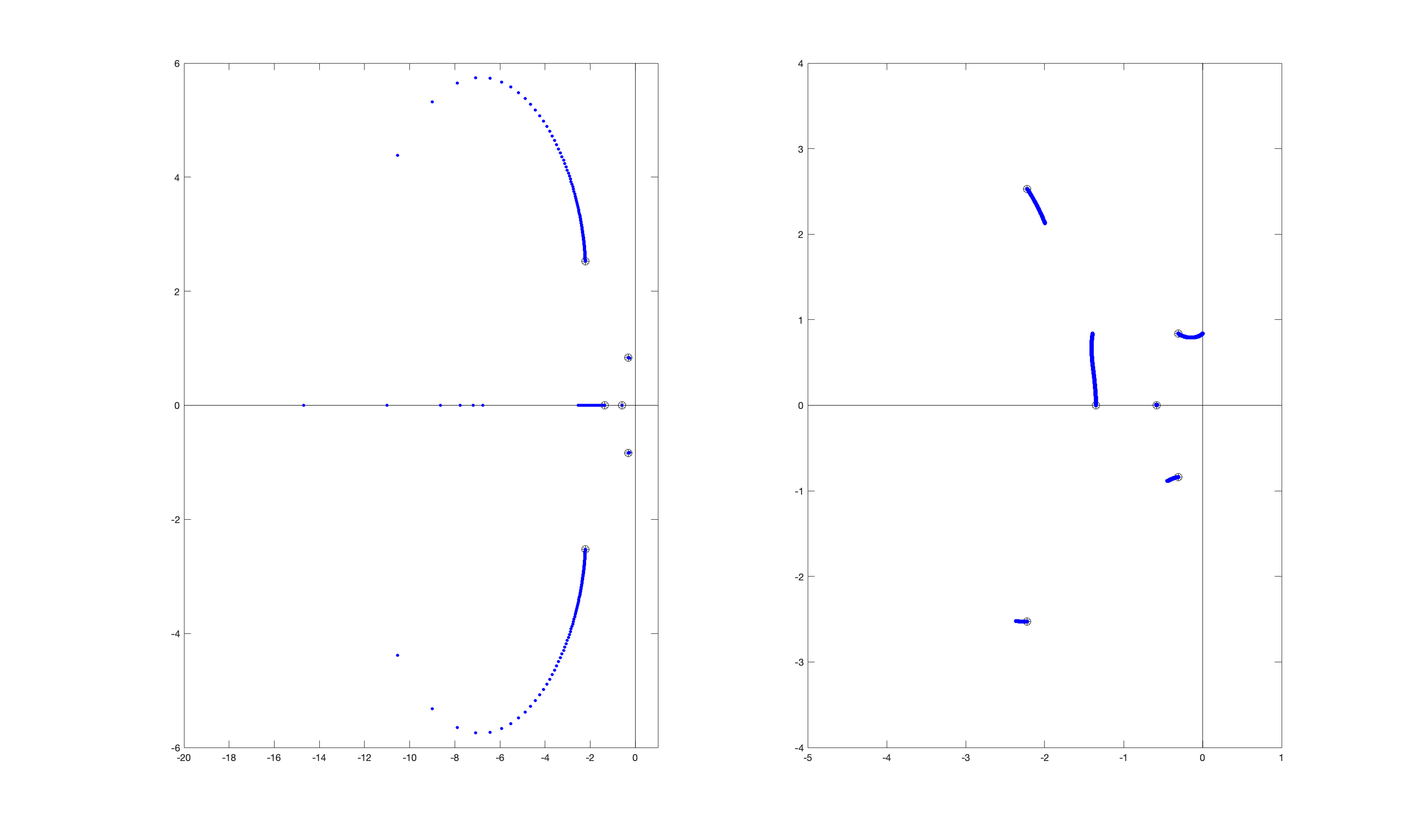}
        \caption{Eigenvalue perturbation curves of dHDAE system with respect to structured complex perturbations from the set $\mathcal S_d(J,R)$ (left) and $\mathcal S_i(J,R)$ (right).}
        \label{fig:example1}
    \end{figure}
    To illustrate the corresponding optimal perturbations $(\Delta_J,\Delta_R)$, we plot the eigenvalues of the pencil
    \[
    zE - (J - R)
    \]
    under the homotopy
    \[
    zE - (J + t\Delta_J - R - t\Delta_R),
    \qquad t \in [0,1],
    \]
where $\Delta_J$ and $\Delta_R$ are the perturbations that give the minimal distance.
Figure~\ref{fig:example1} shows the eigenvalue trajectories when the optimal perturbations are taken from $\mathcal S_d(J,R)$~(left) and $\mathcal S_i(J,R)$~(right). The original eigenvalues are marked by stars surrounded by circles, and as $t$ increases from $0$ to $1$, one observes that the loss of robustness occurs due to an increase in the index of the system (left) and due to an eigenvalue on the imaginary axis (right). It turns out that the perturbations that lead to a loss of robust asymptotic stability have the form $\Delta_J=0 $ and $\Delta_R=\diag(\Delta,0)$, so that the zero block structure of the original system is preserved. In the left plot of Figure~\ref{fig:example1}, one observes that a pair of finite eigenvalues approaches the real axis, coalesces, and then splits: one branch moves toward infinity while the other moves toward zero. The migration of a finite eigenvalue to infinity signals  an increase in the multiplicity of the infinite eigenvalue. This behavior is characteristic for a transition to a higher index. This also explains why the eigenvalue plot corresponding to $\mathcal S_d(J,R)$ exhibits symmetry about the real axis.

When perturbations are taken from the larger set $\mathcal S_i(J,R)$, the minimal perturbation do not preserve the block structure of the original matrices. In this case, the eigenvalue nearest to the imaginary axis moves directly toward the imaginary axis and reaches it without preserving the symmetry.

For the same constructed example, we additionally computed all three structured distances with respect to perturbations from the full structure-preserving sets $\mathcal S_d(E,J,R) \text{ and }\mathcal S_i(E,J,R)$, where all three matrices $E$, $J$, and $R$ are allowed to vary. The corresponding characterizations and bounds are presented in Theorems~\ref{thm:dist-imagEJR},~\ref{thm:dist_singEJR_i}, and~\ref{thm:dist-hiEJR}.
The computed values of these quantities are summarized in the Table~\ref{tab:example1a}.
    \begin{table}[h!]
        \centering
        \renewcommand{\arraystretch}{1.5}
        \begin{tabular}{cccc}
            \hline
            \textbf{$\mathbb  S$}
            & \textbf{l.b. to} $\mathbf{d_{im}^{\mathbb S}(E,J,R)}$
            & $\mathbf{d_{sing}^{\mathbb S}(E,J,R)}$
            & \textbf{u.b. to} $\mathbf{d_{hi}^{\mathbb S}(E,J,R)}$ \\

            & Theorem~\ref{thm:dist-imagEJR}
            & Theorem~\ref{thm:dist_singEJR_i}
            & Theorem~\ref{thm:dist-hiEJR}\\
            \hline

            $\mathcal S_d(E,J,R)$
            & 2.1984
            & 6.3032
            & 8.4024 \\

            $\mathcal S_i(E,J,R)$
            & 1.1467
            & 2.3141
            & 1.1376 \\
            \hline
        \end{tabular}
        \caption{Various structured distances and bounds.}
        \label{tab:example1a}
    \end{table}
    Each of the three distances,
provides an upper bound for the distance to robust instability. Hence the distance to the nearest system that is not robustly asymptotically stable is bounded from above by $1.1376$ and achieved for a system where the index changes.

As expected, 
the structured distances obtained when perturbations are restricted to $J$ and $R$ are consistently larger than those obtained when perturbations in all three matrices $E$, $J$, and $R$ are permitted.
}
\end{example}

\begin{example}{\rm
    Consider the DC power network example from~\cite{MehM19}. The system can be written as dHDAE
    \[
    E\dot{x} = (J - R)x,
    \]
    where the matrices $E, J, R \in \mathbb{R}^{5\times 5}$ are given by
    \[
    E = \text{diag}(L, C_1, C_2, 0, 0),\quad
    J = \begin{bmatrix}
    0 & -1 & 1 & 0 & 0 \\
    1 & 0 & 0 & -1 & 0 \\
    -1 & 0 & 0 & 0 & -1 \\
    0 & 1 & 0 & 0 & 0 \\
    0 & 0 & 1 & 0 & 0
    \end{bmatrix},\quad
    R = \begin{bmatrix}
    R_L & 0 & 0 & 0 & 0 \\
    0 & 0 & 0 & 0 & 0 \\
    0 & 0 & 0 & 0 & 0 \\
    0 & 0 & 0 & R_G & 0 \\
    0 & 0 & 0 & 0 & R_R
    \end{bmatrix},
    \]
    with inductance $L>0$, capacitances $C_1,C_2>0$, and resistances $R_G,R_L,R_R>0$. Choose the parameters \(L, C_1, C_2, R_L, R_G, R_R\) randomly using the \texttt{randn} command in \textsc{Matlab}. Using the criteria from Section~\ref{sec:distalg},  we have verified that the resulting dHDAE system is robustly asymptotically stable. The generalized eigenvalues of the matrix pencil $(E, J-R)$ are $-37.1617, -2.7775 + 4.0447i, -2.7775 - 4.0447i, \infty, \infty$.  Hence, all finite eigenvalues lie strictly in the open left half-plane, while two infinite eigenvalues arise due to the algebraic constraints. The system is regular and of index one.

    We computed the various structured distances introduced in Section~\ref{sec:partial},  while perturbing only $J$ and $R$. The distances are summarized in Table~\ref{tab:example2}.
    \begin{table}[h!]
        \centering
        \renewcommand{\arraystretch}{1.5}
        \begin{tabular}{cccc}
            \hline
            \textbf{$\mathbb  S$}
            & $\mathbf{d_{im}^{\mathbb S}(J,R)}$
            & $\mathbf{d_{sing}^{\mathbb S}(J,R)}$
            & $\mathbf{d_{hi}^{\mathbb S}(J,R)}$ \\

            & Theorem~\ref{thm:dist-imJR}
            & Theorem~\ref{thm:dist-singJR}
            & Theorem~\ref{thm:dist-hiJR}\\
            \hline
            $\mathcal S_d(J,R)$
            & 1.1043
            & 1.6714
            & 1.3393     \\

            $\mathcal S_i(J,R)$
            & 0.9759
            & 1.6714
            & 1.3393 \\
            \hline
        \end{tabular}
        \caption{Various structured distances.}
        \label{tab:example2}
    \end{table}
%
Hence, the distance to the boundary of the region of robust asymptotic stability with respect to the perturbation set $\mathcal S_d(J,R)$ is $d_{inst}^{\mathcal S_d}(J,R)=1.1043$, while with respect to $\mathcal S_i(J,R)$ it is $d_{inst}^{\mathcal S_d}(J,R)=0.9759$. \\

The computed  structured distances respectively bounds with respect to perturbations from the full structure-preserving sets $\mathcal S_d(E,J,R) \text{ and }\mathcal S_i(E,J,R)$ 
are summarized in Table~\ref{tab:example2a}.
    \begin{table}[h!]
        \centering
        \renewcommand{\arraystretch}{1.5}
        \begin{tabular}{cccc}
            \hline
            \textbf{$\mathbb  S$}
            & \textbf{l.b. to} $\mathbf{d_{im}^{\mathbb S}(E,J,R)}$
            & $\mathbf{d_{sing}^{\mathbb S}(E,J,R)}$
            & \textbf{u.b. to} $\mathbf{d_{hi}^{\mathbb S}(E,J,R)}$ \\

            & Theorem~\ref{thm:dist-imagEJR}
            & Theorem~\ref{thm:dist_singEJR_i}
            & Theorem~\ref{thm:dist-hiEJR}\\
            \hline

            $\mathcal S_d(E,J,R)$
            & 1.1769
            & 1.1061
            & 2.1160 \\

            $\mathcal S_i(E,J,R)$
            & 0.0192
            & 1.0015
            & 1.0192 \\
            \hline
        \end{tabular}
        \caption{Various structured distances and bounds.}
        \label{tab:example2a}
    \end{table}
Each of the three distances
provides an upper bound for the distance to nearest system that is not robust asymptotically stable. }
\end{example}


\section{Conclusions}\label{sec:conclusion}

For semidissipative Hamiltonian differential-algebraic systems and the associated matrix pencils, we have presented explicit characterizations as well as bounds for the distance to the nearest system with purely imaginary eigenvalues, the nearest system of index higher than one and the nearest  singular system. Questions of further research are  the equality cases when we only have lower and upper bounds, as well as  appropriate numerical optimization methods to compute these distances. The extension to large-scale systems will require a combination with model reduction techniques.


\section*{Author contributions}%
\addcontentsline{toc}{section}{Author contributions}
 Peter Benner performed writing - review and editing, and funding acquisition. Volker Mehrmann and Punit Sharma performed writing - review and editing, and conceptualization.
 Anshul Prajapati performed writing - original draft, conceptualization and numerical implementation.


\section*{Acknowledgments}%
\addcontentsline{toc}{section}{Acknowledgments}
Anshul Prajapati acknowledges the Max Planck Institute for support through a postdoctoral fellowship. Punit Sharma acknowledges the support of the SERB-CRG grant (CRG/2023/003221) by Government of India.


\end{document}